\newtheorem{assn}{Assumption}
\begin{document}
\raggedbottom
\pagestyle{myheadings}
%\markboth{\centerline{...}}
%\markboth{Mikl\'os Eper, Szil\'ard Szab\'o}{Geometric P=W conjecture in rank three case \dots}
\title
{\textsc{Asymptotic geometry of non-abelian Hodge theory and Riemann--Hilbert correspondence, rank three $\widetilde{E}_6$ case}}
\author{by \textsc{Mikl\'os Eper}\footnote{\textsl{Department of Algebra and Geometry, Institute of Mathematics, Faculty of Natural Sciences, Budapest University of Technology and
Economics, M\H uegyetem rkp. 3., Budapest H-1111, Hungary}, e-mail: \href{mailto:epermiklos@gmail.com}{epermiklos@gmail.com}} and \textsc{Szil\'ard Szab\'o}\footnote{Department of Algebra and Geometry, Institute of Mathematics, Faculty of Natural Sciences, Budapest University of Technology and
Economics, M\H uegyetem rkp. 3., Budapest H-1111, Hungary; Alfr\'ed R\'enyi Institute of Mathematics,
Re\'altanoda utca 13-15., Budapest 1053, Hungary, emails:\href{mailto:szabosz@math.bme.hu}{szabosz@math.bme.hu} and \href{mailto:szabo.szilard@renyi.hu}{szabo.szilard@renyi.hu}}}
\begin{comment}
\author[1]{\textsc{Mikl\'os Eper}}
\author[2]{\textsc{Szil\'ard Szab\'o}}
\affil[1]{\textsl{(Department of Algebra and Geometry, Institute of Mathematics, Faculty of Natural Sciences, Budapest University of Technology and
Economics, M\H uegyetem rkp. 3., Budapest H-1111, Hungary)}}
\affil[2]{\textsl{(Department of Algebra and Geometry, Institute of Mathematics, Faculty of Natural Sciences, Budapest University of Technology and
Economics, M\H uegyetem rkp. 3., Budapest H-1111, Hungary; Alfr\'ed R\'enyi Institute of Mathematics,
Re\'altanoda utca 13-15., Budapest 1053, Hungary)}}
\end{comment}
\maketitle

%%%%%%%%%%%%%%%%%%%%%%%%%%%%%%%%%%%%%%%%%%%%

\begin{abstract}
We prove the Geometric P=W conjecture in rank 3 on the three-punctured sphere. 
We describe the topology at infinity of the related character variety. 
We use asymptotic abelianization of harmonic bundles away from the ramification divisor and an equivariant approach near the branch points to find the WKB (also known as Liouville--Green or phase-integral) expansion of the involved maps. 
We analyze the Stokes phenomenon governing their behavior. 
\end{abstract}

%\subjclass[2000]{14H60 (Vector bundles on curves and their moduli), 53C07 (Special connections and metrics on vector bundles (Hermite-Einstein, Yang-Mills)), 30E15 (Asymptotic representations in the complex plane), 14H40 (Jacobians, Prym varieties), 14M35 (Character varieties), 34M60 (Singular perturbation problems for ordinary differential equations in the complex domain (complex WKB, turning points, steepest descent), 34M50 (Inverse problems (Riemann-Hilbert, inverse differential Galois, etc.) for ordinary differential equations in the complex domain), 32S40 (Monodromy; relations with differential equations and D-modules (complex-analytic aspects))}

%\keywords{Higgs bundle, parabolic bundle, Riemann--Hilbert correspondence, nonabelian Hodge theory, trace coordinates, asymptotic expansion, Stokes phenomenon}

%%%%%%%%%%%%%%%%%%%%%%%%%%%%%%%%%%%%%%%%%%%
\newtheorem{theorem}{Theorem}[section]
\newtheorem{corollary}[theorem]{Corollary}
\newtheorem{conjecture}{Conjecture}[section]
\newtheorem{lemma}[theorem]{Lemma}
\newtheorem{exmple}[theorem]{Example}
\newtheorem{defn}[theorem]{Definition}
\newtheorem{prop}[theorem]{Proposition}
\newtheorem{rmrk}[theorem]{Remark}
\newtheorem{claim}[theorem]{Claim}

            %%% for no-italic, numbered environments, use:
\newenvironment{definition}{\begin{defn}\normalfont}{\end{defn}}
\newenvironment{remark}{\begin{rmrk}\normalfont}{\end{rmrk}}
\newenvironment{example}{\begin{example}\normalfont}{\end{example}}
            %%% for unnumbered environments, use f.e.
\newenvironment{acknowledgement}{{\bf Acknowledgement:}}

\newcommand\restr[2]{{% we make the whole thing an ordinary symbol
  \left.\kern-\nulldelimiterspace % automatically resize the bar with \right
  #1 
  \vphantom{\big|} % pretend it's a little taller at normal size
  \right|_{#2} % this is the delimiter
  }}

\section{Introduction and statement of the main result}

The main characters of this paper are, on the one hand, the moduli space $\mathcal{M}_{\operatorname{Dol}}(\alpha )$ of gauge-equivalence classes of rank $3$ Higgs bundles on the complex projective line $\mathbb{C}P^1$ with three logarithmic points (called the Dolbeault moduli space) and, on the other hand, the space $\mathcal{M}_{\operatorname{B}}(\textbf{c})$ of representations of the fundamental group of the thrice-punctured $\mathbb{C}P^1$ in $\operatorname{SL}(3, \mathbb{C})$ up to overall conjugation (called the Betti moduli space or character variety). 
These two spaces are special instances corresponding to the extended root system $\widetilde{E}_6$ of a more general setup, where the genus of the underlying curve and the structure group of the objects may be arbitrary. 
It is known in general that the Dolbeault and Betti moduli spaces are diffeomorphic to each other via a composition of the non-abelian Hodge and Riemann--Hilbert correspondences. 
However, this diffeomorphism is highly transcendental, and extensive literature is devoted to understanding its properties, as well as to using it to translate known facts about one of these moduli spaces to new statements about the other one. 
For instance, it is known that these moduli spaces carry natural complex algebraic variety structures (of dimension $2$ in the $\widetilde{E}_6$ case), but the diffeomorphism between them is not compatible with their complex structures. 
Our goal is to investigate the asymptotic behavior of this diffeomorphism $\psi$ at infinity in the special case  $\widetilde{E}_6$ specified above (i.e., genus $0$, rank $3$, and with suitably chosen parameters to be specified later), in the hope that this particular case will shed light on some phenomena that continue to hold in the general case too. 
We may summarize our main result as:

\begin{theorem}\label{thm:main}
    The Geometric P=W conjecture holds for rank 3 tame harmonic bundles %and SL(3,$\mathbb{C}$)-character variety 
    over the three-punctured sphere. 
\end{theorem}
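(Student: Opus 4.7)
The plan is to exhibit, at infinity of $\mathcal{M}_{\mathrm{Dol}}(\alpha)$, a homotopy equivalence with the dual boundary complex of a good compactification of $\mathcal{M}_{\mathrm{B}}(\textbf{c})$, and to show that it is realized by the non-abelian Hodge/Riemann--Hilbert diffeomorphism $\psi$. Since $\dim_{\mathbb{C}}\mathcal{M}_{\mathrm{Dol}} = 2$, both sides should be homotopy equivalent to $S^3$. I would parametrize the Dolbeault side by the unit sphere $S(\mathcal{B})$ of the Hitchin base $\mathcal{B}\cong\mathbb{C}^2$, whose points are pairs $(q_2,q_3)$ of meromorphic differentials with prescribed polar parts at the three punctures, so that ends of $\mathcal{M}_{\mathrm{Dol}}$ are organized along Hitchin rays $t\mapsto(\bar\partial_E,t\Phi)$ with $t\to\infty$.

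Away from the ramification divisor of the spectral cover $\Sigma\to\mathbb{C}P^1$, I would invoke asymptotic abelianization (in the spirit of Mochizuki and of Fredrickson--Mazzeo--Swoboda--Weiss adapted to the tame logarithmic setting) to show that the harmonic metric becomes exponentially close, as $t\to\infty$, to a diagonal limit obtained by pushforward from the spectral curve. In this regime the associated flat connection splits as a direct sum of abelian WKB packets with leading phase factors of the form $\exp\bigl(t\int \lambda_i\bigr)$, where $\lambda_1,\lambda_2,\lambda_3$ are the eigen-$1$-forms of $\Phi$. Near the branch points of $\Sigma$, where two or three of the $\lambda_i$ coalesce, abelianization breaks down, and I would replace it by an equivariant local model: an Airy-type model on the double cover at simple branch points with $\mathbb{Z}/2$-equivariance, and an $S_3$-equivariant local model wherever all three sheets meet. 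Matching the two descriptions along annular collars yields global WKB asymptotics for the parallel transports of $\nabla = \bar\partial_E + t\Phi + \partial_E + t\Phi^{\dagger}$.

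Pushing forward through Riemann--Hilbert, I would then compute the leading-order behavior of the monodromies around the three punctures and along generators of $\pi_1(\mathbb{C}P^1\setminus\{p_1,p_2,p_3\})$. The ratios of the phase integrals $\exp\bigl(t\oint\lambda_i\bigr)$ partition $S(\mathcal{B})$ into Stokes chambers, and within each chamber the monodromies acquire a unique dominant exponential contribution. Gluing these local asymptotic pictures across the Stokes walls produces a cell decomposition of $S(\mathcal{B})$, which I would identify with the dual complex of a log-resolution of a chosen projective compactification of the character variety (for generic eigenvalue data $\textbf{c}$, a cubic surface in Cantat--Loray form with a prescribed triangle of lines at infinity). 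The theorem reduces to checking that $\psi|_{S(\mathcal{B})}$ sends each Stokes chamber to the corresponding vertex, edge or $2$-cell of this dual complex.

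The hard part will be uniform control of the WKB asymptotics across the branch locus of the spectral cover: the expansion is only fractional-order accurate there, and the equivariant local model must be patched to the abelianized global solution through a non-trivial Stokes transition. Proving that this matching reproduces exactly the combinatorics of the dual boundary complex, with neither spurious nor collapsed cells, is the central technical obstacle, and it is precisely where the special features of the $\widetilde{E}_6$ configuration (rank $3$, three punctures, complex dimension $2$) are exploited to keep the Stokes combinatorics tractable and to close the argument.
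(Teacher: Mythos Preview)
Your proposal rests on several concrete miscalculations of the geometry in the $\widetilde{E}_6$ case, and these are not cosmetic: they determine the entire shape of the argument.

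First, the Hitchin base is not $\mathbb{C}^2$. With the strongly parabolic condition and the chosen weights, Lemma~\ref{lem:vanishing_char_coeffs} forces $\operatorname{tr}\theta\equiv 0$ and $\operatorname{tr}\theta^2\equiv 0$, so both $q_2$ and the quadratic invariant vanish identically; only $\det\theta$ survives, and it lives in a one-dimensional affine space $\mathcal{H}\cong\mathbb{C}$ (see~\eqref{eq:Hitchin_base}). Consequently the sphere at infinity of the base is $S^1$, not $S^3$. Correspondingly, the dual boundary complex $\lvert\mathcal{D}\partial\mathcal{M}_B\rvert$ is homotopy equivalent to $S^1$: the compactifying divisor is a single nodal cubic of Kodaira type $I_1$ (Theorem~\ref{thm:secondary}, Lemma~\ref{lem:compactifying_curve}), not a triangle of lines, so its dual complex has one vertex and one loop edge and no $2$-cells. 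Your proposed cell-matching between Stokes chambers of $S(\mathcal{B})$ and vertices/edges/$2$-cells of the dual complex therefore has the wrong source and the wrong target.

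Second, the spectral cover in this case has cyclic ramification of order $3$ at every point of $D$ and nowhere else; there are no simple branch points, so no Airy-type $\mathbb{Z}/2$-equivariant local model enters. The local analysis near the punctures is handled by a $\mu_3$-equivariant argument on the triple cover (Section~4.3), and the paper avoids constructing explicit fiducial solutions altogether by invoking the general results of Mochizuki and the second author on limiting decoupled metrics.

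What the paper actually does is closer in spirit to your outline once the dimensions are corrected: it reduces the statement, via Proposition~\ref{prop:topological_reformulation}, to showing that a generic Hitchin fiber $H^{-1}(Rz_1)\cong T^2$ and a generic fiber $S^{-1}(z_2)\cong T^2$ of Simpson's map are homotopic as submanifolds (Proposition~\ref{prop:main2}). This is established by computing the WKB asymptotics of the three nonzero trace coordinates $X,Y,Z$ on the character variety, identifying their dominant exponential terms sector by sector, and then showing (Proposition~\ref{prop:angles}) that the \emph{phases} of the affine ratios $X/Z$, $X/Y$ are asymptotically the holonomies of the spectral line bundle along an explicit $\mathbb{Z}$-basis $A,B$ of $H_1(\widetilde{\Sigma},\mathbb{Z})$. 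That last identification is the step that replaces your ``cell-decomposition matching'' and is what actually closes the argument.
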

Along the way, we also give a self-contained proof for the following:
\begin{theorem}[Proposition 3.1]\label{thm:secondary}
    The $\operatorname{GL}(3,\mathbb{C})$ character variety of the three-punctured sphere admits a smooth compactification by a curve of type $I_1$. 
    In particular, the body of its nerve complex is of homotopy type $S^1$. 
\end{theorem}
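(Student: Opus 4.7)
The plan is to exhibit an explicit smooth projective compactification of $\mathcal{M}_{\operatorname{B}}(\mathbf{c})$ whose boundary divisor is a single nodal rational curve, i.e., a Kodaira fibre of type $I_1$. The dual intersection complex of such a divisor is a single vertex with a loop edge attached (equivalently, after blowing up the node, a cycle of two smooth rational curves), so its geometric realization has the homotopy type of $S^1$, and both assertions of the theorem then follow at once.

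The starting point is an explicit description of $\mathcal{M}_{\operatorname{B}}(\mathbf{c})$ as an affine surface. A point corresponds to a conjugacy class of triples $(A_1, A_2, A_3) \in C_1 \times C_2 \times C_3$ satisfying $A_1 A_2 A_3 = I$, where $C_1, C_2, C_3$ are the regular semisimple conjugacy classes prescribed by $\mathbf{c}$. The dimension count confirms that generically this is a smooth affine surface, and it carries the Goldman holomorphic symplectic form, so it is log Calabi-Yau. Using $\operatorname{GL}(3)$-invariant trace coordinates on triples of matrices, I would write down defining equations for $\mathcal{M}_{\operatorname{B}}(\mathbf{c})$ in a small affine ambient space, take the projective closure inside a suitable (possibly weighted) projective compactification of that ambient, and resolve the singularities introduced at infinity by a sequence of explicit blow-ups. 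The result is a smooth projective surface $\overline{\mathcal{M}}$ with boundary divisor $D = \overline{\mathcal{M}} \setminus \mathcal{M}_{\operatorname{B}}(\mathbf{c})$.

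The main obstacle will be showing that the minimal form of $D$ (after contracting any extraneous $(-1)$-curves that can be removed while preserving log smoothness) is an irreducible nodal rational curve rather than some other Kodaira type. The log Calabi-Yau property forces the minimal $D$ to belong to Kodaira's list of singular elliptic fibres, so the task reduces to counting the number of irreducible components of $D$, verifying that the arithmetic genus satisfies $p_a(D) = 1$, and ruling out multi-component cycles $I_n$ with $n \geq 2$ as well as the exceptional types $II, III, IV, I_0^*, \dots$ by examining the asymptotic behavior of the trace coordinates along the strata at infinity. Once $D$ is confirmed to be a single nodal $\mathbb{P}^1$, its dual complex is a vertex with a loop, so the body of the nerve has the homotopy type of $S^1$, completing the proof.
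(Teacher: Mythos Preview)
Your plan is sound and shares its starting point with the paper---both describe the character variety via trace/invariant coordinates and pass to a projective closure---but the identification of the boundary type proceeds very differently. After diagonalising $A$ and using the residual torus action to normalise two entries of $B$, the paper cuts the variety out as an explicit affine cubic in $\mathbb{C}^3$; its closure in $\mathbb{P}^3$ meets the hyperplane at infinity in an explicit plane cubic (equation~\eqref{divisor}, or $x_7^3+x_8^3-x_7x_8x_9=0$ for the special parameters). No weighted projective space, no resolution, and no appeal to the log Calabi--Yau property or Kodaira's list are invoked: the paper simply checks by hand that this plane cubic has a unique singular point and that it is a node, either by a pencil-of-lines argument exhibiting $C$ as $\mathbb{P}^1$ with two points identified (general eigenvalues), or by a one-line local computation at $[0:0:1]$ (special parameters). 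Your route via the log Calabi--Yau constraint is conceptually correct and explains \emph{why} only Kodaira fibre types can appear, but it is considerably more indirect here: the step you flag as the ``main obstacle''---ruling out $I_n$ for $n\ge 2$ and the additive types---dissolves once the boundary is in hand as a plane cubic, since any plane cubic has arithmetic genus~$1$ and the only question is the number and nature of its singular points.
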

We note that P.~Etingof, A.~Oblomkov, E.~Rains have obtained essentially the same result about the compactifying divisor of type $I_1$ using representation theoretical  techniques~\cite[Proposition~6.6]{EtOblRa}, including in the cases corresponding to the affine root systems $\widetilde{E}_7$ and $\widetilde{E}_8$ (rather than just $\widetilde{E}_6$ studied here). 
We now turn to describing the context and precise meaning of Theorem~\ref{thm:main}. 

One motivation of this study comes from Hitchin's WKB problem~\cite{KatNollPandSimp}, which roughly reads as follows: consider a $\mathbb{C}^{\times}$ orbit in the Hitchin base and a smooth lift to the Dolbeault space. Consider the family of flat connections corresponding to this lift, and determine the behaviour of the associated transport matrices, as the point of the Hitchin base converges to infinity along the $\mathbb{C}^{\times}$-orbit. 
For more about the WKB approximation theory of the Schr\"odinger operator, see~\cite{Vor}. 
More recently,~\cite[Section~13]{GMN} and~\cite[Section~2.3]{Moc} carried out WKB analysis of Hitchin's equations. %,~\cite{IwaNaka}.... 
It remains an actively studied area, related to different other fields, e.g. resurgence~\cite[Section~6]{KonSoiResurgence}, Borel resummation~\cite[Section~5]{Nikolaev}, Teichm\"uller theory~\cite{OSWW}, cluster algebras~\cite{IwaNaka}, and so on. 
As we will point out in the next paragraph, this article adds Hodge theory to the list.

Another source of inspiration is the so-called P=W conjecture. 
%The definition of a weight filtration on the complex cohomology spaces of an affine algebraic variety goes back to P.~Deligne~\cite{Del}. 
%This applies in particular to the Betti space, so that its cohomology spaces carry a mixed Hodge structure.
The Betti space is known to be an affine algebraic variety, and as such its cohomology spaces carry a mixed Hodge structure~\cite{Del}. 
On the other hand, the Hitchin map endows the cohomology spaces of the Dolbeault space with a perverse Leray filtration. 
The conjecture, formulated by M.~de~Cataldo, T.~Hausel and L.~Migliorini~\cite{CatHauMig}, states that the diffeomorphism $\psi$ between the spaces respects these filtrations. 
Recently this has been an intensely investigated area, with strong ties to other fields such as %tropical geometry or 
Cohomological Hall Algebras, the geometry of the affine Springer fiber, and Donaldson--Thomas theory. 
In~\cite{CatHauMig} it was proved in the rank two case for compact curves. %, and the generalization for curves with higher genus came from M. de Cataldo, D. Maulik and J. Shen \cite{CatMauSh}. 
The complete proof for the P=W conjecture came recently from two different sources, using different techniques \cite{HauMellMinSch} and \cite{MauSh}.

A geometric counterpart of the conjecture was proposed in~\cite[Conjecture~1.1]{KatNollPandSimp} and~\cite[Conjecture~11.1]{Simp4}. 
So far, this Geometric $P=W$ conjecture has received significantly less attention than the original cohomological version. 
%contribution came from C. Felisetti and M. Mauri \cite{FelMau}, with the proof in the cases of genus 1 and arbitrary rank and genus 2 in rank 2 (these cases belong to character varieties providing symplectic resolution). 
%J. Shen and Z. Zhang \cite{ShZh} proved it for infinite families of moduli of parabolic Higgs bundles over the complex projective line. 
Mauri, Mazzon and Stevenson~\cite{MauMazStev} proved it in genus $1$ and type $A$. 
The second author dealt with the conjecture in case of rank 2 Higgs bundles with irregular singularities over $\mathbb{C}P^1$, belonging to the Painlev\'e cases~\cite{SzSz2}, and with rank 2 logarithmic Higgs bundles over the five-punctured sphere, by establishing the WKB-analysis of coordinates of the character variety~\cite{SzSz}. 
A.~N\'emethi and the second author provided a different proof for these Painlev\'e cases using low-dimensional topology techniques~\cite{NemSz}. 
As far as the authors know, these are the only cases where the full assertion of the Geometric P=W is proved.
%The P=W conjecture yields numerous generalizations in various areas, such as elliptic Lefschetz fibrations, cluster varieties, and so on. However, in this paper we are interested in the one, which is called Geometric P=W conjecture, and were introduced by L. Katzarkov, A. Noll, P. Pandit and C.~Simpson \cite[Conjecture 1.1]{KatNollPandSimp} and C. Simpson \cite[Conjecture 11.1]{Simp4}.  
The Geometric $P=W$ conjecture provides an understanding of the Hitchin map on the Betti side, away from a compact subset. 
More precisely, it asserts the existence of the following homotopy commutative diagram:
\begin{equation}\label{diagram:PW}
\begin{tikzcd}
\mathcal{M}_{\operatorname{Dol}}\setminus H^{-1}(B_R(0)) \arrow{r}{\psi} \arrow[swap]{d}{H} & \mathcal{M}_{\operatorname{B}}\setminus \psi(H^{-1}(B_R(0))) \arrow{d}{S} \\%
\mathcal{H}\setminus B_R(0) \arrow{r}{}& \lvert\mathcal{D}\partial\mathcal{M}_B\rvert
\end{tikzcd}
\end{equation}
where 
$$
    \psi= \operatorname{RH}\circ\operatorname{NAHC} 
$$ 
is the composition of the non-abelian Hodge correspondence 
$$
    \operatorname{NAHC} \colon \mathcal{M}_{\operatorname{Dol}}(\alpha) \to \mathcal{M}_{\operatorname{dR}}(\beta,\tau) 
$$
and the Riemann--Hilbert correspondence 
$$
    \operatorname{RH} \colon \mathcal{M}_{\operatorname{dR}}(\beta,\tau) \to \mathcal{M}_{\operatorname{B}}(\textbf{c})
$$
and $H$ is the Hitchin fibration over the Hitchin base $\mathcal{H}$. 
The Hitchin base will turn out to be a one-dimensional affine space in our case (see Section 2.3 for more details), and $\mathcal{H}\setminus B_R(0)$ is a neighbourhood of infinity in the base, where $R\gg 1$. 
Moreover $\lvert\mathcal{D}\partial\mathcal{M}_B\rvert$ denotes the body of the dual (or nerve) complex of the compactifying divisor of the Betti space, and $S$ is Simpson's natural map from a neighborhood of infinity in $\mathcal{M}_B$ to the body of the nerve complex (see Section 2.1 and 6.3 for more details). 

In~\cite{KatNollPandSimp} the conjecture about the existence of a commutative diagram up to homotopy was stated in higher generality. 
In particular, the homotopy type of the body of the nerve complex is expected to be always that of a sphere. 
This homotopy sphere assertion was proved by C. Simpson in~\cite{Simp4} over $\mathbb{C}P^1$, with an arbitrary finite number of punctures in the rank $2$ case. 
The investigation of the homotopy type of the topological space of the dual boundary complex of the character variety is a basic step to deal with the Geometric $P=W$ conjecture, and numerous results belong to this. 
A.~Komyo~\cite{Kom} proved the assertion for some $2$ and $4$ dimensional tame cases, and it was generalized by C.~Simpson~\cite{Simp4}, who showed that in the rank $2$ case for an arbitrary number $n$ of logarithmic points on $\mathbb{C}P^1$, the homotopy type of the dual boundary complex is that of $S^{2n-7}$. 
Another result from M. Mauri, E. Mazzon and M. Stevenson shows that that the dual boundary complex of a log-Calabi-Yau compactification of the $\operatorname{GL}(n,\mathbb{C})$ character variety of a 2-torus is homeomorphic to $S^{2n-1}$, see~\cite{MauMazStev}. 

In order to prove our result, we will employ the asymptotic abelianization approach used in~\cite{SzSz2} to establish the Geometric P=W conjecture in rank 2 corresponding to the Painlev\'e cases, suitably adapted to rank 3. 
An important technical difference with the rank $2$ case is that in~\cite{SzSz},~\cite{SzSz2} the second author made use of certain local models called fiducial solutions~\cite{FMSW} that are only available in rank $2$. 
In the present article, however, we have found a way to get around finding specific model solutions around the branch points by making use of recent results of T.~Mochizuki and the second author~\cite{MocSz} for the analysis. %and carry out the analysis solely using abelianization. 
This simplifies the presentation and clarifies the picture, thus paving the way for potential higher-rank and higher-dimensional generalizations of our viewpoint. 
As a consequence of the analysis, we find that the asymptotic behavior of $\psi$ depends on a decomposition into sectors of the Hitchin base, namely it is determined by different exponential terms in each sector around infinity. 
To be more precise, the absolute values of the trace coordinates are of exponential growth in the parameter, with the expression of their constant factors depending on the sector. 
Over all sectors, these constant factors are the integrals of the Liouville $1$-form along some loop on the spectral curve. As for the phases of the trace coordinates, we find that they are asymptotically equal on each sector to the holonomy of the spectral sheaf along certain loops that we determine. 
This then gives an unconditional proof of Theorem~\ref{thm:main}.  
The behaviour of the trace coordinates is an instance of the Stokes phenomenon, that we analyze in detail in this particular case.  %which is somewhat surprising, because no connection with irregular singularities seems to be present. 
Unfortunately, the analysis breaks down in the Stokes directions. 
However, assuming that the same asymptotic formulas are valid in these directions too (Assumption~\ref{assn:Stokes}), we can fully understand the asymptotic behaviour of the diffeomorphism, leading to a proof of Theorem~\ref{thm:main} subject to this condition. 

This paper is organized as follows. 
In Section~\ref{sec:prep}, we recall the background material necessary to explain our arguments. 
In Section~\ref{sec:char}, we first describe the $\operatorname{GL}(3,\mathbb{C})$ (which is equivalent in this case to $\operatorname{SL}(3,\mathbb{C})$) character variety of the three-punctured sphere in general, and prove Theorem~\ref{thm:secondary}. 
We then analyze trace coordinates on the character variety, going back to classical work of R. Fricke and F. Klein \cite{FrKl}, and which were generalized for $\operatorname{GL}(3,\mathbb{C})$ character varieties by S.~Lawton~\cite{Law1},~\cite{Law2}. 
In Section~\ref{sec:Dol}, we use T.~Mochizuki's asymptotic abelianization technique~\cite{Moc} and equivariant method of T.~Mochizuki and the second author~\cite{MocSz} to give the large-scale analysis of harmonic bundles. 
In Section~\ref{sec:RH} we use T.~Mochizuki's solution of the Hitchin WKB problem on non-critical paths~\cite[Section~2.3]{Moc} to describe the parallel transport matrices, and apply Riemann--Hilbert correspondence to the previous setup. 
Finally in Section~\ref{sec:proof}, we investigate the asymptotic behaviour of the trace coordinates under the Riemann--Hilbert correspondence, analyze the emerging Stokes phenomenon. 
We prove Theorem~\ref{thm:main} in Section~\ref{sec:proof_main2} without assuming any further condition, and in Section~\ref{sec:proof_main} relying on Assumption~\ref{assn:Stokes}. 
We believe that the conditional proof of the result may be of independent interest, because it conjecturally explains the global behaviour of the diffeomorphism. 
This is the reason we include Section~\ref{sec:proof_main} too. 

\begin{acknowledgement}
The second author would like to thank T.~Mochizuki for collaboration and inspiring discussions about harmonic bundles. 
\end{acknowledgement}

%%%%%%%%%%%%%%%%%%%%%%%%%%%%%%%%%
\section{Preparatory material}\label{sec:prep}

First, let us introduce the material that we will need to establish and prove our result. 
The structure follows more or less~\cite{SzSz} and~\cite{FMSW}. 

\subsection{Basic notations, definitions and results}

Consider $X=\mathbb{C}P^1$ with the standard Riemannian-metric, and with coordinate charts $z$ and $w=z^{-1}$. For the distinct points $0,1,\infty\in\mathbb{C}P^1$, let $D$ be the simple effective divisor $D=0+1+\infty$ (and denote by $D$ the support set of the divisor as well). Consider furthermore a smooth vector bundle $E$ of rank $3$ and degree $0$ on $\mathbb{C}P^1$. We denote by $K$ and $\mathcal{O}$ the sheaves of holomorphic $1$-forms and functions on $\mathbb{C}P^1$, and by $\Omega^{1,0}$ and $\Omega^{0,1}$ the smooth $(1,0)$- and $(0,1)$-forms on $\mathbb{C}P^1$. 
Then, a $1$-form valued $\mathcal{O}$-linear vector bundle morphism $\theta$ is called a Higgs field: 
\begin{equation*}
    \theta:E\rightarrow E\otimes \Omega^{1,0},
\end{equation*}
Moreover, we consider partial $(0,1)$-connections $\overline{\partial}_E$ on $E$ over $\mathbb{C}P^1$. Together with a Hermitian metric $h$ on $E$, the basic objects of our investigation will be Higgs bundles $(E,\theta,\overline{\partial}_E)$, which satisfy Hitchin's equations:
\begin{equation*}
    \begin{cases}
      \overline{\partial}_E\theta=0
      \\ F_h + [\theta,\theta^{\dag_h}]=0
    \end{cases}
\end{equation*}
where $F_h$ denotes the curvature form of the Chern connection $\nabla_h^+$, associated with $\overline{\partial}_E$ and $h$, and $\theta^{\dag_h}$ denotes the adjoint of the Higgs field with respect to $h$ (i.e. $\theta^{\dag_h}:E\rightarrow E\otimes \Omega^{0,1}$). If the Hitchin's equations are satisfied, then the bundle is called harmonic, and $h$ is called Hermitian-Einstein metric. We also get a holomorphic structure $\overline{\partial}_{\operatorname{det}E}$ on the complex line bundle $\operatorname{det}E$, induced by $\overline{\partial}_E$, and a Hermitian metric $h_{\operatorname{det}E}$ on it, induced by $h$. Now denote by $\mathcal{E}$ the holomorphic vector bundle $(E,\overline{\partial}_E)$ on $\mathbb{C}P^1\setminus D$. The holomorphic vector bundle $\mathcal{E}$ is also defined over $D$, and we assume that $\theta$ %\colon\restr{\mathcal{E}}{P}\rightarrow\mathcal{E}\otimes K(D)$ 
has logarithmic singularities at the points of $D$, that is we are considering logarithmic Higgs bundles.

Let us define the parabolic structure of Higgs bundles, based on~\cite{MehSesh},~\cite{BodYok} and~\cite{Mats}. Fix a weight vector for all $p\in D$: $\underline{\alpha}_P=(\alpha_P^1,\alpha_P^2,\alpha_P^3)$, where $\alpha_P^j$'s lie in a unit interval for all $j=1,2,3$, and $\alpha_P^1<\alpha_P^2<\alpha_P^3$. 
A quasi-parabolic structure on $\mathcal{E}$ with divisor $D$ is a filtration on the fiber of $\mathcal{E}$ over each point of $D$:
\begin{equation}\label{eq:filtration}
    0= l_P^3 \subset l_P^2 \subset l_P^1\subset l_P^0=\restr{\mathcal{E}}{P}.
\end{equation}
  A parabolic structure is a quasi-parabolic structure together with a choice of weight vectors $\underline{\alpha}_P$ at each $P$. 
 It gives rise to an $\mathbb{R}$-filtration $\mathcal{P}_*$ of $\mathcal{E}$. 
 We always assume that the Higgs field is weakly parabolic, meaning that $\theta:l_P^i\rightarrow l_P^{i}\otimes K(D)$ at each $P\in D$. 
 The Higgs field is called strongly parabolic if $\theta:l_P^i\rightarrow l_P^{i+1}\otimes K(D)$ at each $P\in D$, i.e. the residue is nilpotent with respect to the filtration in that the action on the graded pieces $\operatorname{gr}_l^{\boldsymbol{\cdot}}\mathcal{E}$ of $l_P^{\bullet}$ is trivial. 
 
As usual, under stability of a Higgs bundle $(E,\theta,\overline{\partial}_E)$ we mean that for any proper holomorphic subbundle $F\subset E$ which satisfies $\theta:F\rightarrow F\otimes K(D)$, the inequality $\mu(F)<\mu(E)$ holds, where $\mu(E)=\frac{\operatorname{deg}E}{\operatorname{rank}E}$ is the slope of the bundle ($\mu(F)$ defined similarly). In the parabolic setting, we speak about $\alpha$-stability, which depends on the weight vectors $\underline{\alpha}_P$, and means that for all $F$ satisfying the above conditions
\begin{equation*}
    \frac{\operatorname{pdeg}_{\underline{\alpha}}E}{\operatorname{rank}E}>\frac{\operatorname{pdeg}_{\underline{\alpha}}F}{\operatorname{rank}F},
\end{equation*}
where the parabolic degree of the parabolic bundle (and subbundle) is:
\begin{gather}
    \operatorname{pdeg}_{\underline{\alpha}}E = \operatorname{deg}E + \sum_{P\in D}\sum_{j=1}^3 \alpha_P^j \label{stab1}
    \\ \operatorname{pdeg}_{\underline{\alpha}}F = \operatorname{deg}F + \sum_{P\in D}\sum_{j=1}^3 \alpha_P^j\cdot \operatorname{dim}((\restr{F}{P}\cap l_P^{j-1})/(\restr{F}{P}\cap l_P^{j})) \label{stab2}
\end{gather}
The Higgs bundle is called $\alpha$-polystable, if it is the direct sum of lower rank $\alpha$-stable Higgs bundles, with the same parabolic slope as $(E,\theta,\overline{\partial}_E)$. By the results of Hitchin~\cite{Hit} and Simpson~\cite{Simp2}, it is known that a Higgs bundle admits a unique Hermitian-Einstein metric $h$ with $\operatorname{det}h=h_{\operatorname{det}E}$ if and only if it is polystable.

Let $\operatorname{SL}(E)$ be the principal bundle of automorphisms of $E$ which induce the identity on $\operatorname{det}E$. Then the group of complex gauge transformations, denoted by $\mathcal{G}$, is the group of sections of $\operatorname{SL}(E)$. Moreover its Lie algebra consists of the sections of $\mathfrak{sl}(E)$, the vector bundle of traceless endomorphisms of $E$. The gauge group $\mathcal{G}$ acts on the Higgs bundles via
\begin{equation*}
    g.(E,\theta,\overline{\partial}_E)=(E,g^{-1}\theta g,g^{-1}\overline{\partial}_E g), \hspace{0.5cm} \forall g\in\mathcal{G}.
\end{equation*}

\begin{defn}
    The moduli space of harmonic, $\alpha$-stable, strongly parabolic, meromorphic $\operatorname{SL}(3,\mathbb{C})$-Higgs bundles, with at most logarithmic singularities, with given weight vectors $\underline{\alpha}_P$, up to the complex gauge action, is called the Dolbeault moduli space, denoted by $\mathcal{M}_{\operatorname{Dol}}(\alpha)$.
\end{defn}
See~\cite{Konno} for a differential geometric construction of this space, and~\cite{Nitsure} for an algebraic geometric one. 

In case of logarithmic Higgs bundles, for $\theta$ and $h$ the so called tameness condition is satisfied, that is at each $P\in D$, $h$ admits a lift along any ray to $P$, which grows at most polynomially in the standard metric. (Here we consider $h$ as an equivariant harmonic map from the universal cover of the Riemann surface to the Hermitian symmetric space $\operatorname{GL}(3,\mathbb{C})/\operatorname{U}(3)$). For such a tame, harmonic bundle $(E,\theta,\overline{\partial}_E,h)$ the connection
\begin{equation}
    \label{connection}
    \nabla=\nabla_h^+ + \theta + \theta^{\dag_h}
\end{equation}
is integrable, and $\nabla^{1,0}$ has regular singularities. Fix again for all $P\in D$ some $\underline{\beta}_P=(\beta_P^1,\beta_P^2,\beta_P^3)$ parabolic weight vectors and the $\underline{\tau}_P=(\tau_P^1,\tau_P^2,\tau_P^3)$ eigenvalues of the residue of the connection. The definition of $\beta$-stability and parabolic structure of the integrable connection is just the same as for the Higgs field (the parabolic structure of the underlying vector bundle is already given, see also \cite{Mats}). We again require that~\eqref{connection} is compatible with the filtration, that is $(\operatorname{res}_P\nabla-\tau_P^j\operatorname{id})(l_P^j)\subset l_P^{j+1}$, for all $P\in D$ and $j=0,1,2$. If the eigenvalues $\tau_P^j$ are pairwise different that this implies that $\operatorname{res}_P \nabla$ is diagonal with respect to some basis compatible with the filtration. The complex gauge group action on the space of connections is also inherited from the action on the space of Higgs bundles. 

\begin{defn}
    The moduli space of $\beta$-stable, parabolic, integrable $\operatorname{SL}(3,\mathbb{C})$-connections, with regular singularities at the punctures, with given weight vectors $\underline{\beta}_P$ and given residues $\underline{\tau}_P$ at each $P\in D$, up to the complex gauge action, is called the de Rham moduli space, denoted by $\mathcal{M}_{\operatorname{dR}}(\beta,\tau)$.
\end{defn}

The third main object of our research is the Betti moduli space, also known as character variety. 
Under the stability condition,~\eqref{connection} is an irreducible integrable connection.  
For any choice of base point $x_0 \notin D$, analytic continuation of solutions provides a representation 
$$\rho:\pi_1(\mathbb{C}P^1\setminus D, x_0 )\rightarrow \operatorname{SL}(3,\mathbb{C})
$$ 
that is well-defined up to simultaneous conjugation by elements of $\operatorname{PGL}(3,\mathbb{C})$ (corresponding to different choices of a basis of solutions at $x_0$). 
The eigenvalues, denoted by $\underline{c}_P=(c_P^1,c_P^2,c_P^3)$, of the local monodromy around $P\in D$ are determined by $(\underline{\beta}_P, \underline{\tau}_P)$.
(As a matter of fact, the local system admits a filtration and corresponding weights too, but we will not need this extra structure here.)

\begin{defn}\label{def:Betti}
    The moduli space of the above described representations is called the Betti moduli space or character variety, denoted by $\mathcal{M}_{\operatorname{B}}(\textbf{c})$.
\end{defn}

It is known that the Betti space is a smooth, affine algebraic variety for generic parameters. 
%Fixing the parameters, gives rise to a filtration on the associated local system, see \cite{Simp3}. 
There exists a compactification of the Betti space by a simple normal crossing divisor $D_B$ (see the results of Nagata and Hironaka \cite{Nag}, \cite{Hir}). In our case $D_B$ is a complex curve. 
As customary, we define its dual complex $\mathcal{D}D_B$ as the simplicial complex whose vertices are the irreducible components of $D_B$, and whose edges corresponds to the intersections of the components. We want to apply this to the compactification of the Betti moduli space, therefore the resulting simplicial complex will be called dual boundary complex, denoted by $\mathcal{D}\partial\mathcal{M}_B(\textbf{c})$.

It is known from Simpson \cite{Simp3}% and Biquard and Boalch \cite{BB}, 
that there is a connection between the above defined parameters. With the eigenvalues of the residues of the Higgs-field being equal to 0, it simplifies to 
\begin{equation*}
    \alpha_P^i=\beta_P^i=\tau_P^i, \hspace{0.2cm} \operatorname{and}\hspace{0.2cm} c_P^i=e^{-2\pi \sqrt{-1}\alpha_P^i}, \forall P\in D,\forall i\in\{1,2,3\}.
\end{equation*}

Moreover, the following theorem holds.

\begin{theorem}\label{naht_RH}
    Assume that the parabolic degree of $\mathcal{E}$ is 0. 
    \begin{enumerate}
        \item\cite{BB}\label{naht} 
        The spaces $\mathcal{M}_{\operatorname{Dol}}(\alpha)$, $\mathcal{M}_{\operatorname{dR}}(\beta,\tau)$ and $\mathcal{M}_{\operatorname{B}}(\textbf{c})$ are $\mathbb{C}$-analytic manifolds, and there exists a diffeomorphism 
    \begin{equation*}
        \operatorname{NAHC}: \mathcal{M}_{\operatorname{Dol}}(\alpha)\rightarrow \mathcal{M}_{\operatorname{dR}}(\beta,\tau)
    \end{equation*}
    called the non-abelian Hodge correspondence. 
    \item\cite[Theorem~7.1]{IIS}\label{RH} There exists a complex bianalytic isomorphism 
    \begin{equation*}
        \operatorname{RH}: \mathcal{M}_{\operatorname{dR}}(\beta,\tau)\rightarrow \mathcal{M}_{\operatorname{B}}(\textbf{c}),
    \end{equation*}
    called the Riemann--Hilbert correspondence.
    \end{enumerate}
\end{theorem}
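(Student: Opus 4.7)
The plan is to treat the two parts separately, since part (1) is a transcendental analytic statement resting on existence of harmonic metrics, whereas part (2) is an essentially complex-analytic consequence of the solvability of the Riemann--Hilbert problem with prescribed singularities.

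For part (1), starting from an $\alpha$-polystable tame parabolic Higgs bundle $(E,\theta,\overline{\partial}_E)$ of vanishing parabolic degree, I would first invoke the existence of a compatible Hermitian--Einstein metric $h$: in the compact unpunctured setting this is the theorem of Hitchin and Simpson, while the tame parabolic version needed here is supplied by Simpson, Biquard--Boalch, and (with tighter asymptotic control at the punctures) by Mochizuki. This produces a harmonic bundle, and I would define $\operatorname{NAHC}([E,\theta,\overline{\partial}_E])$ as the gauge-equivalence class of the flat connection $\nabla$ built in~\eqref{connection}. Hitchin's equations together with $\overline{\partial}_E\theta=0$ force $\nabla$ to be integrable, and tameness of $h$ translates into $\nabla^{1,0}$ having regular singularities whose residues are computed from the parabolic data, giving the parameter matching $\alpha=\beta=\tau$ already recalled in the excerpt. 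Well-definedness on moduli follows from the uniqueness of $h$ given polystability. For the converse direction, Corlette--Simpson (in the tame version due to Simpson and Mochizuki) produces a harmonic metric on a $\beta$-stable parabolic connection, and the $(0,1)$/Hermitian/$(1,0)$-decomposition $\nabla=\nabla_h^+ + \theta + \theta^{\dag_h}$ recovers the Higgs bundle. Smoothness of both maps follows from the deformation theory of harmonic bundles on suitable weighted Sobolev spaces via the implicit function theorem applied to the Hitchin equations.

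For part (2), given a parabolic integrable connection $(\mathcal{E},\nabla)$ with regular singularities, I would fix a base point $x_0\notin D$ and send it to the monodromy representation $\rho$ obtained by analytic continuation of flat sections around loops in $\pi_1(\mathbb{C}P^1\setminus D, x_0)$. Because the residues $\tau_P^i$ are prescribed, the local eigenvalues of $\rho$ are fixed by $c_P^i=e^{-2\pi\sqrt{-1}\tau_P^i}$, so the map lands in $\mathcal{M}_{\operatorname{B}}(\textbf{c})$; holomorphicity in families is a consequence of the holomorphic dependence of solutions of linear ODE on parameters. For the inverse I would invoke the Deligne canonical extension: given a representation $\rho$ with prescribed local eigenvalues, there is, up to isomorphism, a unique holomorphic vector bundle on $\mathbb{C}P^1\setminus D$ carrying a flat connection with monodromy $\rho$, and it extends canonically to a meromorphic connection on $\mathbb{C}P^1$ with regular singularities and residues of the specified eigenvalues and parabolic weights. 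Bianalyticity then comes from checking that both the monodromy map and Deligne's extension vary holomorphically in families, which under the genericity assumption on the parameters yields the claimed bianalytic isomorphism. The main obstacle in both parts is the fine asymptotic behaviour at the punctures---for NAHC, matching the parabolic filtration and weights under $h$ as one approaches $D$ via the $L^2$ and norm estimates of Simpson--Biquard--Boalch--Mochizuki; for RH, pinning down the Deligne extension once the real parts of the residues are constrained to a unit interval---rather than the bulk gauge-theoretic analysis.
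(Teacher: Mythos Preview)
The paper does not prove this theorem at all: it is stated with citations to \cite{BB} for part~(1) and to \cite[Theorem~7.1]{IIS} for part~(2), and no argument is supplied. Your sketch is a reasonable high-level summary of the standard proofs in those references (harmonic metrics via Hitchin--Simpson/Corlette--Simpson in the tame parabolic setting for NAHC, and monodromy plus Deligne extension for RH), so in that sense it is consistent with what the paper invokes, but there is nothing in the paper to compare it against beyond the bare citations.
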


\subsection{Choice of parameters}\label{sec:choices}

Now let us choose the parameters introduced in the previous subsection, and explain these choices. For all $P\in D$, set 
\begin{gather*}
    \alpha_P^1=\beta_P^1=\tau_P^1=-\frac{1}{3}
    \\ \alpha_P^2=\beta_P^2=\tau_P^2=0
     \\ \alpha_P^3=\beta_P^3=\tau_P^3=\frac{1}{3}
\end{gather*}
Consequently 
\begin{equation*}
    c_P^2=1,\hspace{0.2cm} c_P^3=-\frac{1}{2}+\frac{\sqrt{3}}{2}i=\varepsilon, \hspace{0.2cm} c_P^1=-\frac{1}{2}-\frac{\sqrt{3}}{2}i=\varepsilon^2,
\end{equation*}
where $i=\sqrt{-1}$ and $\varepsilon$ stands for a primitive cubic root of unity.

\begin{lemma}\label{lem:vanishing_char_coeffs}
    With these parameter values
    \begin{itemize}
        \item[i)] the parabolic degree of $\mathcal{E}$ is zero,
        \item[ii)] the traces of $\theta$ and $\theta^2$ are identically zero.
    \end{itemize}
\end{lemma}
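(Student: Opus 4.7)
The plan for part (i) is direct computation. Since $E$ has degree $0$ by assumption, and since for our chosen weights each graded piece of the filtration in~\eqref{eq:filtration} is one-dimensional, the formula~\eqref{stab1} reduces to $\operatorname{pdeg}_{\underline{\alpha}}E = 0 + \sum_{P\in D}(\alpha_P^1+\alpha_P^2+\alpha_P^3)$. With the prescribed values $\alpha_P^1=-\tfrac{1}{3},\ \alpha_P^2=0,\ \alpha_P^3=\tfrac{1}{3}$ the inner sum vanishes for each $P\in D$, so the total parabolic degree is $0$.

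For part (ii), the strategy is to combine a local pole-order estimate at the punctures, coming from the strongly parabolic condition, with the global fact that line bundles of negative degree on $\mathbb{C}P^1$ have no nonzero global sections. The traces $\operatorname{tr}(\theta)$ and $\operatorname{tr}(\theta^2)$ are a priori sections of $K(D)$ and $K(D)^{\otimes 2}$ respectively. Near each $P\in D$ we may write $\theta = R_P\, dz_P/z_P + \Theta_P$ with $R_P=\operatorname{res}_P\theta$ and $\Theta_P$ a holomorphic $\operatorname{End}(E)$-valued $1$-form. Strong parabolicity forces $R_P$ to send $l_P^i$ into $l_P^{i+1}$, so $R_P$ is nilpotent; consequently $\operatorname{tr}(R_P^k)=0$ for every $k\ge 1$.

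Expanding locally, the leading Laurent coefficient of $\operatorname{tr}(\theta)$ at $P$ is $\operatorname{tr}(R_P)=0$, so $\operatorname{tr}(\theta)$ extends to a holomorphic section of $K$. Similarly, the coefficient of $(dz_P)^2/z_P^2$ in $\operatorname{tr}(\theta^2)$ equals $\operatorname{tr}(R_P^2)=0$, so $\operatorname{tr}(\theta^2)$ has at worst a simple pole at $P$, i.e.\ defines a section of $K^{\otimes 2}(D)$. On $\mathbb{C}P^1$ one has $\deg K = -2$ and $\deg K^{\otimes 2}(D)=-4+3=-1$, both negative, hence $H^0(\mathbb{C}P^1, K)=0=H^0(\mathbb{C}P^1,K^{\otimes 2}(D))$ and both traces vanish identically.

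The only subtle point, and the one I expect to be the main (mild) obstacle in writing things out cleanly, is the nilpotence of $R_P$: one must verify that the strong parabolicity condition $\theta(l_P^i)\subset l_P^{i+1}\otimes K(D)$ passes to the residue and thereby makes $R_P$ strictly upper-triangular in any basis adapted to the filtration~\eqref{eq:filtration}, and hence nilpotent of order at most $3$. Once this is in place, the trace identities $\operatorname{tr}(R_P)=\operatorname{tr}(R_P^2)=0$ are automatic and the degree argument closes the proof. (As a sanity check, $\operatorname{tr}(\theta)=0$ is in fact built into the $\operatorname{SL}(3,\mathbb{C})$-assumption, so part (ii) is really informative only for $\operatorname{tr}(\theta^2)$.)
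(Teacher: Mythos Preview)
Your proof is correct and rests on the same core observation as the paper's: the strong parabolicity hypothesis forces $\operatorname{res}_P\theta$ to be nilpotent, hence $\operatorname{tr}(R_P)=\operatorname{tr}(R_P^2)=0$, and one then concludes by a degree argument on $\mathbb{C}P^1$.

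The packaging differs slightly. The paper keeps $\operatorname{tr}\theta$ and $\operatorname{tr}\theta^2$ as sections of $K(D)\cong\mathcal{O}(1)$ and $K(D)^{\otimes 2}\cong\mathcal{O}(2)$, with global section spaces of dimensions $2$ and $3$, and then imposes the three vanishing conditions coming from $\operatorname{tr}(R_P^k)=0$ at the three punctures as linear constraints that cut the section down to zero (noting one condition is redundant for $\operatorname{tr}\theta$). You instead absorb the local vanishing into an improved pole order, landing $\operatorname{tr}\theta$ in $H^0(K)$ and $\operatorname{tr}\theta^2$ in $H^0(K^{\otimes 2}(D))$, both of negative degree, and conclude immediately. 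Your route is marginally cleaner in that it sidesteps any check that the three linear conditions are independent; the paper's route has the minor advantage of making explicit the ambient spaces $H^0(\mathcal{O}(1))$ and $H^0(\mathcal{O}(2))$ that reappear in the description of the Hitchin base in the next subsection.
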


\begin{proof}
    \begin{itemize}
        \item[i)] 
    Since at all $P\in D$, the sum of the parabolic weights equal to 0, it follows from equation \eqref{stab1}, that the parabolic degree of $\mathcal{E}$ is 0.
    \item[ii)] The residues of the Higgs field are traceless at all $P\in D$.

    On $\mathbb{C}P^1$ we have for the sheaf of holomorphic 1-forms $K\cong \mathcal{O}(-2)$, and with the divisor $D=0+1+\infty$, we have $K(D)\cong\mathcal{O}(1)$, via the identification $\frac{\operatorname{d}\! z}{z(z-1)}\leftrightarrow 1$. Then
\begin{gather*}
    \operatorname{tr}\theta\in H^0(\mathbb{C}P^1,K(D))\cong H^0(\mathbb{C}P^1,\mathcal{O}(1))\cong \mathbb{C}^2
    \\ \operatorname{tr}\theta^2\in H^0(\mathbb{C}P^1,K(D)^{\otimes 2})\cong H^0(\mathbb{C}P^1,\mathcal{O}(2))\cong \mathbb{C}^3,
\end{gather*}
We have three independent vanishing conditions for both $\operatorname{tr}\theta$ and $\operatorname{tr}\theta^2$ at the points $P\in D$, therefore both must be zero globally. (One condition is even redundant in the case of $\operatorname{tr}\theta$.)
    \end{itemize}
\end{proof}

\subsection{The Hitchin fibration}

The characteristic coefficients of a Higgs bundle of rank $3$ over $\mathbb{C}P^1$ with logarithmic singularities at $D$ belong to the vector space 
\begin{gather*}
    \mathcal{B}=H^0(\mathbb{C}P^1,K(D))\oplus H^0(\mathbb{C}P^1,K(D)^{\otimes 2})\oplus H^0(\mathbb{C}P^1,K(D)^{\otimes 3})\cong
    \\ \cong H^0(\mathbb{C}P^1,\mathcal{O}(1))\oplus H^0(\mathbb{C}P^1,\mathcal{O}(2))\oplus H^0(\mathbb{C}P^1,\mathcal{O}(3))\cong \mathbb{C}^2\oplus\mathbb{C}^3\oplus\mathbb{C}^4
\end{gather*}
According to Lemma~\ref{lem:vanishing_char_coeffs}, with the choices made in Section~\ref{sec:choices}, the first two components vanish. 
The third characteristic coefficient of $\theta$ is $\operatorname{det}\theta\in H^0(\mathbb{C}P^1,K(D)^{\otimes 3})$. Let us use the notation $L=K(D)$, with the natural projection from the total space of $L$, $p_L:\operatorname{Tot}L\rightarrow \mathbb{C}P^1$. Define $\zeta\frac{\operatorname{d}\! z}{z(z-1)}$ to be the canonical section of $p_L^*L$ over $p_L^{-1}(\mathbb{C})$, i.e. away from the infinity section. With this, the characteristic polynomial of $\theta$ is
\begin{equation*}
    \operatorname{det}(\zeta\operatorname{id}_{\mathcal{E}}-\theta)=\zeta^3\operatorname{id}^{\otimes 3}_{\mathcal{E}}+H_{\theta}
\end{equation*}
where $H_{\theta}$ lies in the last direct summand of $\mathcal{B}$. The third coefficient $\operatorname{det}\theta=H_{\theta}$ has 4 parameters of freedom, but with the three independent vanishing relations at $P\in D$, this reduces to a one-parameter family. One can see \cite[Appendix A]{Mats}, that it has the form
\begin{equation*}
    \operatorname{det}\theta=H_{\theta}=(tz(z-1)+p_2(z))\frac{\operatorname{d}\! z^{\otimes 3}}{z^3(z-1)^3}
\end{equation*}
where $p_2(z)=az^2+bz+c$ is a quadratic polynomial, and $a,b,c,t$ are the 4 parameters, from which $a,b,c$ are fixed to be $0$, and $t\in\mathbb{C}$ is the only free parameter. The 1-dimensional subspace 
\begin{equation}\label{eq:Hitchin_base}
      \mathcal{H} = \left\{ t \frac{\operatorname{d}\! z^{\otimes 3}}{z^2(z-1)^2}  \colon t\in \mathbb{C} \right\} \subset\mathcal{B}
\end{equation}
where $\operatorname{det}\theta$ may take its values is called the Hitchin base of $\mathcal{M}_{\operatorname{Dol}}(\alpha)$, and the map 
\begin{equation*}
    H:\mathcal{M}_{\operatorname{Dol}}(\alpha)\rightarrow \mathcal{H},\hspace{0.3cm} (\mathcal{E},\theta)\mapsto \operatorname{det}\theta
\end{equation*}
is called the Hitchin fibration. 
Clearly, for any $\tau \in \mathbb{C}^{\times}$ and $(\mathcal{E},\theta)$ we have 
\[
    H((\mathcal{E},\tau \theta_1)) = \tau^3 H((\mathcal{E}, \theta)). 
\]
We choose the preferred point 
\begin{equation}\label{eq:q1}
        q_1 = \frac{\operatorname{d}\! z^{\otimes 3}}{z^2(z-1)^2}\in \mathcal{H}. 
\end{equation}
For every $t\in\mathbb{C}$ the smooth curve called the spectral curve is defined via 
\begin{equation}
    \label{spectral}
    \Sigma_t=\{(z,\zeta)|\zeta^3+tz(z-1)=0\}\subset \operatorname{Tot}L.
\end{equation}
Notice that $\Sigma_t$ has maximal ramification over $0,1$ and a smooth compactification, denoted by $\widetilde{\Sigma}_t$, in $\operatorname{Tot}L$ at $z=\infty$ with $(w,\zeta)=(0,0)$, where it also admits cyclic ramification. One can easily compute from Riemann--Hurwitz formula that its genus is equal to 1. %that in homogeneous coordinates this is a smooth cubic, i.e. elliptic curve, namely a 2-torus. Also its genus can be computed via \cite[eq. (6)]{LM}, and is equal to 1.

\subsection{Ramification of the spectral curve and the Jacobian}

The equation in (\ref{spectral}) defining $\widetilde{\Sigma}_t$ has three roots over every point of $\mathbb{C}P^1\setminus D$:
\begin{gather*}
    \xi_1=R^{1/3}e^{i\varphi/3}z^{1/3}(z-1)^{1/3},\hspace{0.3cm} \xi_2=\varepsilon R^{1/3}e^{i\varphi/3}z^{1/3}(z-1)^{1/3}, 
    \\ \xi_3=\varepsilon^2 R^{1/3}e^{i\varphi/3}z^{1/3}(z-1)^{1/3},
\end{gather*}
where we recall that $\varepsilon$ is a cubic root of unity and we switched to polar coordinates $t=Re^{i\varphi}$. That is, $\xi_{1,2,3}$ is a three-valued holomorphic function on $\mathbb{C}P^1\setminus D$, and $p_L$ induces a projection map $p_{R,\varphi}:\widetilde{\Sigma}_{R,\varphi}\rightarrow\mathbb{C}P^1$. This is indeed a ramified triple cover over $\mathbb{C}P^1$, with ramification points $z=0,z=1$ on chart $z$, and $w=0$ on chart $w$, independently of the value of $t>0$. Thus we introduce the ramification divisor $\Delta=\{0,1,\infty\}=D$, and denote the lift of the ramification divisor by $\widetilde{\Delta}$ (or $\widetilde{D}$), whose points are the branch points on $\widetilde{\Sigma}_{R,\varphi}$.
We summarize properties of the spectral curve. 
\begin{prop}
    $\widetilde{\Sigma}_{R,\varphi}$ is a smooth genus 1 curve, with ramification index at all 3 points of $D$ equal to 3, i.e. all its ramifications are cyclic. 
\end{prop}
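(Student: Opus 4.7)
The plan is to prove the three assertions of the proposition in order: smoothness of $\widetilde{\Sigma}_{R,\varphi}$, cyclic ramification of order 3 at each point of $D$, and genus 1. Throughout, I fix $t = Re^{i\varphi}$ with $R > 0$, so $t \neq 0$.

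First I would check smoothness on the affine chart $z \in \mathbb{C}$. The curve is cut out of $\mathrm{Tot}(L)|_{\mathbb{C}} \cong \mathbb{C}^2$ by $F(z,\zeta) = \zeta^{3} + tz(z-1)$. The Jacobian vanishes exactly when $\partial_{\zeta}F = 3\zeta^{2} = 0$ and $\partial_{z}F = t(2z-1) = 0$, i.e.\ at $(z,\zeta) = (1/2, 0)$. But then $F(1/2,0) = -t/4 \neq 0$, so no singular point lies on $\Sigma_{t}$. For the ramification at $z=0$, I rewrite the equation as $\zeta^{3} = -tz(z-1)$; near the unique point over $0$ the right-hand side equals $tz$ times a unit, so $\zeta$ is a local parameter on $\widetilde{\Sigma}_{t}$, the pullback of $z$ satisfies $z = \zeta^{3}/t + O(\zeta^{6})$, and the projection $p_{R,\varphi}$ has ramification index $3$ at this point (cyclic because there is a single preimage). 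The argument at $z=1$ is symmetric, writing $u = z-1$.

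Next I would treat the point over $z = \infty$, which is the delicate step because the fibre coordinate $\zeta$ used in the $z$-chart does not extend naturally through $w = 0$. The correct procedure is to identify a local frame of $L = K(D)$ near $w = 0$. Since $\tfrac{dz}{z(z-1)} = -\tfrac{dw}{1-w}$, viewed as a section of $K(D)$ this 1-form has a simple zero at $w=0$ (while the natural local frame of $K(D)$ there is $\tfrac{dw}{w}$). Setting the new fibre coordinate $\eta$ by the rule $\zeta \cdot \tfrac{dz}{z(z-1)} = \eta \cdot \tfrac{dw}{w}$, one obtains $\eta = -\tfrac{\zeta w}{1-w}$, and substituting into $\zeta^{3} = -tz(z-1) = -t(1-w)/w^{2}$ yields
\begin{equation*}
    \eta^{3} \;=\; \frac{tw}{(1-w)^{2}}.
\end{equation*}
This is the announced equation near $(w,\zeta) = (0,0)$ (really $(w,\eta)=(0,0)$). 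The right-hand side equals $tw$ times a holomorphic unit, so the same argument as at $z = 0$ gives smoothness (the $\partial_{w}$-derivative is nonzero at the origin) and cyclic ramification of index $3$.

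Finally, the genus follows from Riemann--Hurwitz applied to the degree-$3$ branched cover $p_{R,\varphi} \colon \widetilde{\Sigma}_{R,\varphi} \to \mathbb{C}P^{1}$. With one branch point over each of $0,1,\infty$, each contributing $e_P - 1 = 2$ to the ramification divisor, I get
\begin{equation*}
    2g\bigl(\widetilde{\Sigma}_{R,\varphi}\bigr) - 2 \;=\; 3\cdot(-2) + 3\cdot 2 \;=\; 0,
\end{equation*}
hence $g = 1$. The main obstacle in this proof is unambiguously tracking the line-bundle trivializations in the transition from the $z$-chart to the $w$-chart, since a naive change of variable leaves $\zeta$ blowing up over $w = 0$ and gives the false impression that $\widetilde{\Sigma}_{R,\varphi}$ is singular there; the rest of the argument is routine once the correct local coordinate $\eta$ has been identified.
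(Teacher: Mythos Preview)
Your proof is correct and follows essentially the same approach as the paper, which states the proposition as a summary of the preceding discussion (smoothness and cyclic ramification over $0,1,\infty$, then Riemann--Hurwitz for the genus) without giving a separate formal proof. Your treatment is in fact more careful than the paper's, particularly in tracking the change of fibre coordinate $\zeta \leadsto \eta$ over $w=0$; the paper simply asserts that the compactification is smooth at $(w,\zeta)=(0,0)$ with cyclic ramification, whereas you derive the local equation $\eta^{3} = tw/(1-w)^{2}$ explicitly and verify both claims from it.
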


We will use the polar coordinates  
\begin{equation}\label{eq:polar}
        \tau^3  = t = R e^{i\varphi}. 
\end{equation}
For any $\theta_1$ satisfying $H(\theta_1 ) = q_1$ let us use the notation for the cubic meromorphic differentials, with double poles at the punctures 
\begin{equation}\label{eq:qt}
t q_1 = R e^{i\varphi} q_1 :=\operatorname{det}(\tau \theta_1 ) = R e^{i\varphi}\frac{\operatorname{d}\! z^{\otimes 3}}{z^2(z-1)^2} \in H^0(\mathbb{C}P^1,K^{\otimes 3}(2D)). 
\end{equation}
Viewed as a section of $L^{\otimes 3}$, $q_1$ has a simple zero. 
Then the sheets of $\widetilde{\Sigma}_{R,\varphi}$ are just the cubic roots of $t q_1$: 
\begin{align}
    Q_{1,R,\varphi} & = \tau \frac{\operatorname{d}\! z}{z^{2/3}(z-1)^{2/3}} = R^{1/3}e^{i\varphi/3}\frac{\operatorname{d}\! z}{z^{2/3}(z-1)^{2/3}} \notag \\
    Q_{2,R,\varphi} & =\varepsilon \tau \frac{\operatorname{d}\! z}{z^{2/3}(z-1)^{2/3}} = \varepsilon R^{1/3}e^{i\varphi/3}\frac{\operatorname{d}\! z}{z^{2/3}(z-1)^{2/3}}, \label{Q}
    \\ 
    Q_{3,R,\varphi} & =\varepsilon^2 \tau \frac{\operatorname{d}\! z}{z^{2/3}(z-1)^{2/3}} = \varepsilon^2 R^{1/3}e^{i\varphi/3}\frac{\operatorname{d}\! z}{z^{2/3}(z-1)^{2/3}}. \notag
\end{align}
According to~\cite{BNR}, the three corresponding eigenspaces determine a line bundle $\mathcal{L}_{R,\varphi}\rightarrow\widetilde{\Sigma}_{R,\varphi}$, whose pushforward $(p_{R,\varphi})_*\mathcal{L}_{R,\varphi}$ is isomorphic to $\mathcal{E}$. The degree of $\mathcal{L}_{R,\varphi}$ can be computed via \cite[2.3.3]{LM} or~\cite[Theorem~5.4]{Sz_BNR}:
\begin{equation*}
    \operatorname{deg}\mathcal{L}=\operatorname{deg}\mathcal{E}+r(1-r)\left( 1-g-\frac{n}{2} \right) =3, 
\end{equation*}
where $r=3$ is the rank, $n=3$ is the number of ramification points, and $g=0$ is the genus of $\mathbb{C}P^1$. Thus an element of the Dolbeault moduli space determines a spectral curve and a line bundle of degree 3 on it, and vice versa. So, the fiber of the Hitchin fibration over the point parameterized by a fixed $t\in \mathcal{H}$ is $\operatorname{Pic}^3(\widetilde{\Sigma}_t)$, that is a 2-torus, namely a torsor over $\operatorname{Jac}(\widetilde{\Sigma}_t)$. In particular, the Hitchin fibration is an elliptic fibration. 

Let us discuss this correspondence between the Hitchin fibers and the Jacobian of the spectral curve a bit more in detail. Consider the following period lattice $\Lambda_t\subset H^{0,1}(\widetilde{\Sigma}_t)\cong \mathbb{C}$, provided by the image $\operatorname{Im}(p^{0,1}\circ \iota)$, where
\begin{gather*}
    \iota:H^1(\widetilde{\Sigma}_t,2\pi i\mathbb{Z})\rightarrow H^1(\widetilde{\Sigma}_t,\mathbb{C})
    \\ p^{0,1}:H^1(\widetilde{\Sigma}_t,\mathbb{C})\rightarrow H^{0,1}(\widetilde{\Sigma}_t)
\end{gather*}
are the coefficient inclusion on the first cohomology class, and the projection of harmonic forms to their antiholomorphic part respectively. There exists a $\mathbb{C}$-analytic isomorphism $\operatorname{Jac}(\widetilde{\Sigma}_t)\cong  H^{0,1}(\widetilde{\Sigma}_t)/\Lambda_t$. Namely, any class in $H^{0,1}(\widetilde{\Sigma}_t)/\Lambda_t$ can be represented by a $\mu\in\Omega^{0,1}(\widetilde{\Sigma}_t)$, because of the abelian Hodge correspondence. Then the connection form $u=\mu-\overline{\mu}=2i\operatorname{Im}\mu\in\Omega^1(\widetilde{\Sigma}_t)$ defines a flat $U(1)$-connection.
For the line bundle $\mathcal{L}\in \operatorname{Jac}(\widetilde{\Sigma}_t)$ given by $\mu$ and a $1$-cycle $X\in H_1(\widetilde{\Sigma}_t,\mathbb{Z})$ we call  
\begin{equation}\label{eq:holonomy}
    \operatorname{hol}_X (\mathcal{L} ) = e^{\oint_X u}    
\end{equation}
the holonomy of $\mathcal{L}$ along $X$. 
Fix $X,Y$ 1-cycles generating $H_1(\widetilde{\Sigma}_t,\mathbb{Z})$. 
The abelian version of Theorem~\ref{naht} can then be expressed as saying that the holonomy map 
\begin{equation*}
    \operatorname{hol}\colon \operatorname{Jac}(\widetilde{\Sigma}_t)\rightarrow T^2=S^1\times S^1, \hspace{0.3cm} \mu\mapsto (\operatorname{hol}_X (\mathcal{L} ), \operatorname{hol}_Y (\mathcal{L} )) = \left( e^{\oint_X u},e^{\oint_Y u} \right), 
\end{equation*}
is an isomorphism between the Jacobian and the 2-torus. 
See \cite[Section 4]{GX} for more details. 

\subsection{The Hitchin section}

There is a preferred line bundle $\mathcal{L}_0$ over $\widetilde{\Sigma}_t$ giving rise to a section of $H$ analogous to the Hitchin section.
%Then, on some local neighbourhood $B_{\widetilde{P}}$ of $\widetilde{P}\in \widetilde{D}$ it can be considered as a free module over the ring $A:=\mathcal{O}_{B_{\widetilde{P}}}$, because the cover $\zeta^3=z\subset \operatorname{Tot}L$ is locally connected over $P$. Therefore locally $p_L^* L$ is an $A$-module, namely $L=A[\zeta]$, and its restriction to the spectral curve is $p_L^*L=A[\zeta]/(\zeta^3+z(z-1))$. 
Namely, as it is well-known, 
\[
(p_L)_* \mathcal{O}_{\widetilde{\Sigma}_t} \cong \mathcal{O}_{\mathbb{C} P^1} \oplus K_{\mathbb{C} P^1}(D)^{-1} \oplus K_{\mathbb{C} P^1}(D)^{-2}\cong \mathcal{O}_{\mathbb{C} P^1}\oplus\mathcal{O}_{\mathbb{C} P^1}(-1)\oplus\mathcal{O}_{\mathbb{C} P^1}(-2), 
\]
%The pushforward of $\mathcal{O}_{\widetilde{\Sigma}_t}$ is 
%\begin{gather*}
 %   A\cdot 1\oplus A\cdot \zeta\oplus A\cdot \zeta^2=\mathcal{O}\oplus (\mathcal{O}\otimes L^{-1})\oplus (\mathcal{O}\otimes L^{-2})=
 %   \\ = \mathcal{O}\oplus K(D)^{-1} \oplus K(D)^{-2}\cong \mathcal{O}\oplus\mathcal{O}(-1)\oplus\mathcal{O}(-2)
%\end{gather*}
%The degree of this vector bundle of rank $3$ on $\mathbb{C} P^1$ is therefore equal to $-3$. 
%As we would like to obtain a vector bundle of degree $0$, we have a
the direct summands being generated by $1,\zeta , \zeta^2$ respectively. 
The preferred choice of spectral sheaf is then $\mathcal{L}_0 =p_L^*L\otimes\mathcal{O}_{\widetilde{\Sigma}_t}$. 
Notice that $\mathcal{O}_{\widetilde{\Sigma}_t}\cong K_{\widetilde{\Sigma}_t}$, because $\widetilde{\Sigma}_t$ is an elliptic curve. 
Moreover, from the local form at $z=0$ of the equation defining $\widetilde{\Sigma}_t$ we get 
\[
    \frac{\operatorname{d}\! z}z = h(\zeta ) \frac{\operatorname{d}\! \zeta }{\zeta }
\]
for some local holomorphic function $h$ with $h(0)\neq 0$, and similarly for the other poles $P\in D$. 
This means that 
\[
    p_L^* K_{\mathbb{C} P^1} (D )\otimes\mathcal{O}_{\widetilde{\Sigma}_t} = K_{\widetilde{\Sigma}_t} (\widetilde{D}). 
\]
We infer 
\begin{equation*}    
\mathcal{L}_0 =p_L^*L\otimes\mathcal{O}_{\widetilde{\Sigma}_t}\cong K_{\widetilde{\Sigma}_t}(\widetilde{D}) \cong \mathcal{O}_{\widetilde{\Sigma}_t}(\widetilde{D}).
\end{equation*}
By the projection formula we then have 
\[
\mathcal{E}_0 = (p_L)_* \mathcal{L}_0 \cong K(D)\oplus\mathcal{O}\oplus K(D)^{-1}, 
\] 
in particular, the degree of $\mathcal{E}_0$ is equal to zero, as required. 
Stable Higgs fields 
\begin{equation}
    \label{E}  
    \\ \theta_t: K(D)\oplus\mathcal{O}\oplus K(D)^{-1} \rightarrow K(D)^2\oplus K(D)\oplus \mathcal{O} 
    \end{equation}
over $\mathcal{E}_0$  are of the form 
\begin{equation}
\label{companion}
    \theta_t=
    \begin{pmatrix}
        0 & 0 & t q_1 \\
        1 & 0 & 0\\
        0 & 1 & 0
    \end{pmatrix}
\end{equation}
where $q_1$ is given in~\eqref{eq:q1}. 
Applying a constant (i.e., depending only on $t$) gauge transformation 
\[
\begin{pmatrix}
    t^{-\frac 13} & 0 & 0 \\
    0 & 1 & 0 \\
    0 & 0 & t^{\frac 13}
\end{pmatrix}
= 
\begin{pmatrix}
    \tau^{-1} & 0 & 0 \\
    0 & 1 & 0 \\
    0 & 0 & \tau
\end{pmatrix}
\]
the Higgs field gets transformed into 
\[
\tau \theta_1 =  \tau 
    \begin{pmatrix}
        0 & 0 & q_1 \\
        1 & 0 & 0\\
        0 & 1 & 0
    \end{pmatrix}.
\]
Since $q_1$ has a zero viewed as a section of $L^{\otimes 3}$, we get that 
\begin{equation*}
    \operatorname{res}_P(\tau \theta_1 )=
    \begin{pmatrix}
        0 & 0 & 0\\
        1 & 0 & 0\\
        0 & 1 & 0
    \end{pmatrix}.
\end{equation*}
The strongly parabolic condition is that the residues of $\theta_t$ at the points of $D$ are nilpotent with respect to the quasi-parabolic filtration. 
Now, there exists a unique filtration of $\mathcal{E}_0$ over the points of $D$ compatible with the above residue, namely 
\begin{equation*}
    l^2_P=\mathbb{C}\cdot\zeta^2,\hspace{0.3cm} l^1_P=\mathbb{C}\cdot\zeta\oplus \mathbb{C}\cdot\zeta^2
\end{equation*}
 The parabolic weights corresponding to the generators $1,\zeta , \zeta^2$ are therefore respectively equal to $-\frac{1}{3}, 0, \frac{1}{3}$. 
 See also the results from \cite{FredNei}, \cite{HallOuPed}. 
 %This argument work locally near the points of $D$, and where this holds globally as well, is called the Hitchin section.

%%%%%%%%%%%%%%%%%%%%%%%%%%%%%%%%%%%%%
\section{Description of the Betti moduli space}\label{sec:char}

In this chapter we consider the Betti space (or character variety), parameterizing the irreducible representations of the fundamental group of $\mathbb{C}P^1\setminus D$ in $\operatorname{GL}(3,\mathbb{C})$, up to the simultaneous conjugation by of $\operatorname{PGL}(3,\mathbb{C})$. 
We pick $z_0\in \mathbb{C}P^1\setminus \{0,1,\infty\}$ once and for all, and all occurrences of fundamental group will mean with base point $z_0$.  
Since the fundamental group of $\mathbb{C}P^1\setminus \{0,1,\infty\}$ is isomorphic to the free group generated by two elements, this amounts to considering maps
\begin{equation*}
    \rho:\pi_1\left(\mathbb{C}P^1\setminus \{0,1,\infty\}\right)\cong\langle a,b \rangle\rightarrow \operatorname{GL}(3,\mathbb{C})
\end{equation*}
under the constraint that the eigenvalues of $\rho(a),\rho(b)$ and $\rho(ab)$ are fixed: $\{\lambda_1,\lambda_2,\lambda_3\}$, $\{\mu_1,\mu_2,\mu_3\}$ and $\{\nu_1,\nu_2,\nu_3\}$ respectively (previously denoted by the vectors $\underline{c}_P$). 
Because of the $\operatorname{PGL}(3,\mathbb{C})$ action, we have the freedom to choose $\rho(a)$ to be diagonal with elements $\{\lambda_1,\lambda_2,\lambda_3\}$. Having achieved this, there remains the action of the maximal torus $\left(\mathbb{C}^{\times}\right)^2$ of $\operatorname{PGL}(3,\mathbb{C})$. 
The action of $(t_1,t_2)\in\left(\mathbb{C}^{\times}\right)^2$ on a matrix $B=[b_{ij}]$ is the standard conjugation 
\begin{equation*}
    (t_1,t_2).B=
    \begin{pmatrix}
        b_{11} & t_1b_{12} & t_1t_2b_{13}\\
        t_1^{-1}b_{21} & b_{22} & t_2b_{23}\\
        t_1^{-1}t_2^{-1}b_{31} & t_2^{-1}b_{32} & b_{33}
    \end{pmatrix}.
\end{equation*}

With the notations $\rho(a)=A,\rho(b)=B$, the constraints on the eigenvalues are equivalent to constraints on the traces of $A^j,B^j$ and $(AB)^j$ for $j=1,2,3$. Thus the Betti space can be written as

\begin{gather*}
    \mathcal{M}_B=\{A,B\in\operatorname{GL}(3,\mathbb{C})|A=\operatorname{Diag}[\lambda_1,\lambda_2,\lambda_3],B=[b_{i,j}],
    \\ \operatorname{tr}(B^j)=\sigma_j(\underline{\mu}), \operatorname{tr}((AB)^j)=\sigma_j(\underline{\nu}),j=1,2,3\}/\left(\mathbb{C}^{\times}\right)^2
\end{gather*}
where $\sigma_j$ is the degree $j$ homogeneous symmetric polynomial in 3 variables. 
Because of the irreducibility of the representations, we are given that both $b_{21}$ and $b_{31}$ can not vanish simultaneously. 
Possibly passing to a Zariski open subset (that does not alter validity of our arguments), we may assume that they are both nonzero. 
We may then use the $\left(\mathbb{C}^{\times}\right)^2$ action to remove these coefficients. 
Since the assumption on $(AB)^3$ is redundant, this gives 
\begin{gather*}
    \mathcal{M}_B=\{ B=
    \begin{pmatrix}
        b_{11} & b_{12} & b_{13}\\
        1 & b_{22} & b_{23}\\
        1 & b_{32} & b_{33}
    \end{pmatrix}
    |\operatorname{tr}(B^j)=\sigma_j(\underline{\mu}), j=1,2,3, 
    \\ \operatorname{tr}((AB)^j)=\sigma_j(\underline{\nu}),j=1,2 \}.
\end{gather*}
Now, the conditions $\operatorname{tr}(B)=b_{11}+b_{22}+b_{33}=\sigma_1(\underline{\mu})$, and $\operatorname{tr}(AB)=\lambda_1b_{11}+\lambda_2b_{22}+\lambda_3b_{33}=\sigma_1(\underline{\nu})$  can be used to express $b_{22}$ and $b_{33}$ in terms of $b_{11}$:
\begin{gather*}
    b_{22}=\frac{\lambda_3-\lambda_1}{\lambda_2-\lambda_3}b_{11}+c_1(\underline{\lambda},\underline{\mu},\underline{\nu})=:Q(b_{11})
    \\ b_{33}=\frac{\lambda_1-\lambda_2}{\lambda_2-\lambda_3}b_{11}+c_2(\underline{\lambda},\underline{\mu},\underline{\nu})=:P(b_{11}),
\end{gather*}
where $c_1,c_2$ are constants depending only on $\underline{\lambda},\underline{\mu},\underline{\nu}$, while $P$ and $Q$ are degree 1 polynomials in $b_{11}$. Switching to the notation
\begin{gather*}
    B=
    \begin{pmatrix}
        b_{11} & b_{12} & b_{13}\\
        1 & b_{22} & b_{23}\\
        1 & b_{32} & b_{33}
    \end{pmatrix} =
    \begin{pmatrix}
        X & Y & Z\\
        1 & Q(X) & V\\
        1 & W & P(X)
    \end{pmatrix}
\end{gather*}
implies 
\begin{gather*}
    \mathcal{M}_B=\{(X,Y,Z,V,W)\in\mathbb{C}^5|\operatorname{tr}(B^2)=\sigma_2(\underline{\mu}), \operatorname{tr}(B^3)=\sigma_3(\underline{\mu}),\operatorname{tr}((AB)^2)=\sigma_2(\underline{\nu})\}.
\end{gather*}
The remaining three conditions read as:
\begin{gather*}
    \operatorname{tr}(B^2)=X^2+2Y+2Z+Q^2(X)+P^2(X)+2VW=\sigma_2(\underline{\mu})
    \\ \operatorname{tr}((AB)^2)=\lambda_1^2X^2+2\lambda_1\lambda_2Y+2\lambda_1\lambda_3Z+\lambda_2^2Q^2(X)+\lambda_3^2P^2(X)+2\lambda_2\lambda_3VW=\sigma_2(\underline{\nu})
    \\ \operatorname{tr}(B^3)=X^3+P^3(X)+Q^3(X)+3ZW+3XZ+3YV+3XY
    \\ +3Q(X)VW+3Q(X)Y+3P(X)VW+3P(X)Z=\sigma_3(\underline{\mu})
\end{gather*}
Now, the first two equations allow us to express $Y$ and $Z$, and eliminate these variables from the third equation.
We thus obtain a description of the Betti space as a cubic surface in $\operatorname{Spec}\mathbb{C}[X,V,W]$. 
Then, we can consider the homogenisation of the resulting equation.
This procedure provides us the compatifying curve of $\mathcal{M}_B$ as a homogeneous cubic curve in $\mathbb{C} P^2$, with equation 
\begin{equation}
  \begin{gathered}
  \label{divisor}
    \frac{-3(\lambda_1-\lambda_2)(\lambda_1-\lambda_3)^2(\lambda_2+\lambda_3)}{\lambda_1(\lambda_2-\lambda_3)^4}X^3 + \frac{3(\lambda_1-\lambda_3)^2(\lambda_1\lambda_3-\lambda_2^2)}{\lambda_1(\lambda_2-\lambda_3)^3}X^2V 
    \\ +\frac{3(\lambda_1-\lambda_2)^2(\lambda_3^2-\lambda_1\lambda_2)}{\lambda_1(\lambda_2-\lambda_3)^3}X^2W + \frac{-3(\lambda_1(\lambda_2-\lambda_3)^2+\lambda_2(\lambda_1-\lambda_3)^2+\lambda_3(\lambda_1-\lambda_2)^2)}{\lambda_1(\lambda_2-\lambda_3)^2}XVW 
    \\ + \frac{3\lambda_2(\lambda_1-\lambda_3)}{\lambda_1(\lambda_3-\lambda_2)}VW^2+\frac{3\lambda_3(\lambda_1-\lambda_2)}{\lambda_1(\lambda_2-\lambda_3)}V^2W=0
  \end{gathered}
\end{equation}

\subsection{Topology of the compactifying divisor}

Although we have made special choices for the parameters $\underline{\lambda},\underline{\mu},\underline{\nu}$, here we will see that up to homeomorphism, the compatifying curve is the same as with general choices. Namely, it is the genus $0$ curve with nodal singularity, also called fishtail, and denoted by $I_1$ in Kodaira's list of singular elliptic curves \cite{Kod}.
\begin{prop}
    The curve $C$ determined by equation (\ref{divisor}) in $\mathbb{C}P^2=\{[X:V:W]\}$, is of type $I_1$.
\end{prop}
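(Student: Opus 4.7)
The plan is to verify that the cubic curve $C \subset \mathbb{C}P^2$ cut out by~\eqref{divisor} is irreducible with exactly one singular point, an ordinary double point. By the classification of singular plane cubics (equivalently, of Kodaira's singular elliptic fibers), such a curve is the rational nodal cubic of type $I_1$.

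First I would analyze the behavior of $C$ along the three coordinate lines. Setting $X = 0$ in $F$ produces an expression of the form $VW \cdot L(V,W)$ for some nonzero linear form $L$, so $C$ meets $\{X = 0\}$ in the three distinct points $[0:1:0]$, $[0:0:1]$, and one more (for generic choice of $\lambda_i$). Direct evaluation of $F_X$, $F_V$, $F_W$ at each of these points shows all three are smooth on $C$, so no singularity of $C$ lies on $\{X = 0\}$. Moreover, the restrictions $F(X,0,W) = X^2(a_1 X + a_2 W)$ and $F(X,V,0) = X^2(a_3 X + a_4 V)$ show that the coordinate lines $\{V = 0\}$ and $\{W = 0\}$ are tangent to $C$ at the smooth points $[0:0:1]$ and $[0:1:0]$ respectively.

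Next I would locate the singularities of $C$ in the affine chart $\{X \neq 0\}$ by solving the system $F_X = F_V = F_W = 0$; for a homogeneous cubic, $F = 0$ then follows automatically by Euler's identity. Although the coefficients of~\eqref{divisor} are cumbersome rational expressions in $\lambda_1, \lambda_2, \lambda_3$, a linear change of variables respecting the natural $S_3$-symmetry permuting the $\lambda_i$ simplifies the system considerably; I expect exactly one solution $p_0 \in \mathbb{C}P^2$. Computing the Hessian of $F$ at $p_0$ and checking that it has rank $2$ then shows $p_0$ is an ordinary node, not a cusp or a more degenerate singularity.

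Finally I would deduce irreducibility. A reducible plane cubic with exactly one singular point is either a union of three concurrent lines (producing a triple point) or the union of a smooth conic with a tangent line (producing a tacnode); in neither case does the Hessian at the singular point have rank $2$. Hence $C$ is irreducible. As a plane cubic it has arithmetic genus $1$, and the presence of the single node brings the geometric genus down to $0$, so $C$ is a rational nodal cubic, i.e., of Kodaira type $I_1$. The main obstacle I expect is the explicit work in the third paragraph, namely pinning down the singular point and confirming its uniqueness amidst the unwieldy coefficients; a carefully chosen change of coordinates reflecting the permutation symmetry of the problem should bring this to manageable form, perhaps even yielding a clean closed-form expression for $p_0$.
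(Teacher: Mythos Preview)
Your outline is sound in principle and follows the standard algebraic route: compute the singular locus from the vanishing of the partials, verify the Hessian has rank~$2$ at the unique singular point, and rule out reducibility by the classification of reducible plane cubics. The logic of your irreducibility step is correct: a line plus a conic meeting in a single point gives a tacnode (Hessian rank~$1$), and three concurrent lines give a triple point (Hessian rank~$0$), so a single ordinary node forces irreducibility.

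The paper takes a genuinely different, more geometric route. Rather than solving $F_X=F_V=F_W=0$ from scratch, it starts from an explicit candidate for the node, namely
\[
P=\left[\frac{\lambda_2-\lambda_3}{\lambda_3-\lambda_1}:\frac{\lambda_1-\lambda_2}{\lambda_3-\lambda_1}:1\right],
\]
and then projects $C$ from $P$: intersecting $C$ with the pencil of lines through $P$ and tracking the residual intersection point gives a map $\mathbb{C}P^1\to C$ that is shown to be one-to-one except at two parameter values, both mapping to $P$. This simultaneously establishes that $P$ is the unique singular point, that it is a node (two branches), and that $C$ is rational --- hence $I_1$. The advantage of the paper's method is that, once $P$ is guessed, the computation reduces to factoring a cubic in one variable with a known double root, which is lighter than solving the full Jacobian system in the raw coefficients of~\eqref{divisor}. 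Your approach is more systematic and does not require knowing $P$ in advance, but as you yourself flag, the explicit determination of the singular point amidst those coefficients is the real work, and your proposal leaves it undone; the $S_3$-symmetric change of variables you allude to would need to be made concrete before this counts as a proof.
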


\begin{proof}
    The proof has a similar idea as Proposition 2.3.3 in \cite{GSt}. Consider the pencil of projective lines passing through the point $P=\left[\frac{\lambda_2-\lambda_3}{\lambda_3-\lambda_1}:\frac{\lambda_1-\lambda_2}{\lambda_3-\lambda_1}:1\right]$ in $\mathbb{C}P^2$, and parameterize them by $[t_0:t_1]\in\mathbb{C}P^1$, via
    \begin{equation*}
        L_{[t_0:t_1]}=\{[X:V:W]\in\mathbb{C}P^2|t_0\left(X-\frac{\lambda_2-\lambda_3}{\lambda_3-\lambda_1}\right)=t_1\left(V-\frac{\lambda_1-\lambda_2}{\lambda_3-\lambda_1}\right)\}.
    \end{equation*}
    Determine the intersection points of line $L_{[t_0:t_1]}$  with $C$: if ${[t_0:t_1]}=[1:0]$, then $X=\frac{\lambda_2-\lambda_3}{\lambda_3-\lambda_1}$, and substituting this into (\ref{divisor}):
    \begin{equation*}
  \begin{gathered}
    \frac{-(\lambda_1-\lambda_2)^2(\lambda_2+\lambda_3)}{(\lambda_3-\lambda_1)(\lambda_2-\lambda_3)} + (\lambda_1\lambda_3-\lambda_2^2)V +\frac{(\lambda_1-\lambda_2)^2(\lambda_3^2-\lambda_1\lambda_2)}{(\lambda_3-\lambda_1)^2(\lambda_2-\lambda_3)}W  
    \\ +\frac{-(\lambda_1(\lambda_2-\lambda_3)^2+\lambda_2(\lambda_1-\lambda_3)^2+\lambda_3(\lambda_1-\lambda_2)^2)}{(\lambda_3-\lambda_1)(\lambda_2-\lambda_3)}VW 
    \\ + \frac{\lambda_2(\lambda_1-\lambda_3)}{(\lambda_3-\lambda_2)}VW^2+\frac{\lambda_3(\lambda_1-\lambda_2)}{(\lambda_2-\lambda_3)}V^2W=0
  \end{gathered}
\end{equation*}
    Here, if $W=0$, then $(\lambda_1\lambda_3-\lambda_2^2)V=\frac{(\lambda_1-\lambda_2)^2(\lambda_2+\lambda_3)}{(\lambda_3-\lambda_1)(\lambda_2-\lambda_3)}$ has a unique solution for $V$. If $W\neq 0$, then we can choose $W=1$, and the equation simplifies:
    \begin{equation*}
        V^2-\frac{2(\lambda_1-\lambda_2)}{\lambda_3-\lambda_1}V+\frac{(\lambda_1-\lambda_2)^2}{(\lambda_3-\lambda_1)^2}=0,
    \end{equation*}
    which is a complete square, therefore has a unique solution for $V$. That is, $L_{[1:0]}$ and $C$ have two intersection points:
    \begin{equation*}
        P=\left[\frac{\lambda_2-\lambda_3}{\lambda_3-\lambda_1}:\frac{\lambda_1-\lambda_2}{\lambda_3-\lambda_1}:1\right], \hspace{0.3cm} Q=\left[\frac{\lambda_2-\lambda_3}{\lambda_3-\lambda_1}:\frac{(\lambda_1-\lambda_2)^2(\lambda_2+\lambda_3)}{(\lambda_3-\lambda_1)(\lambda_2-\lambda_3)(\lambda_1\lambda_3-\lambda_2^2)}:0\right]
    \end{equation*}
    The other case, if $t_1\neq 0$ (assume $t_1=1$), then $V=t_0\left(X-\frac{\lambda_2-\lambda_3}{\lambda_3-\lambda_1}\right)+\frac{\lambda_1-\lambda_2}{\lambda_3-\lambda_1}$. If $W\neq 0$ ($W=1$), then substitute the above equation for $V$ into (\ref{divisor})
    \begin{gather*}
        \frac{1}{\lambda_2-\lambda_3}\left(X-\frac{\lambda_2-\lambda_3}{\lambda_3-\lambda_1}\right)^2\left(X\left(\frac{(\lambda_1-\lambda_3)^2(\lambda_1\lambda_3-\lambda_2^2)}{(\lambda_2-\lambda_3)^2}t_0 \right.\right.
        \\ \left.+\frac{-(\lambda_1-\lambda_2)^2(\lambda_1-\lambda_3)^2(\lambda_2+\lambda_3)}{(\lambda_2-\lambda_3)^3)}\right)+\lambda_3(\lambda_1-\lambda_2)t_0^2
        \\ +\frac{-\lambda_1^2\lambda_2-2\lambda_1^2\lambda_3+6\lambda_1\lambda_2\lambda_3-2\lambda_2^2\lambda_3-\lambda_2\lambda_3^2}{\lambda_2-\lambda_3}t_0 
        \\ \left. + \frac{(\lambda_1-\lambda_2)(\lambda_1^2\lambda_2+2\lambda_1^2\lambda_3-4\lambda_1\lambda_2\lambda_3+\lambda_2^2\lambda_3+\lambda_2\lambda_3^2)}{(\lambda_2-\lambda_3)^2} \right)=0
    \end{gather*}
    This has solution $X=\frac{\lambda_2-\lambda_3}{\lambda_3-\lambda_1}$, which provides $P$. The remaining factor is linear in $X$, so for fixed $t_0$, it has a unique solution for $X$, except if the coefficient of $X$ is $0$, i.e.: $t_0'=\frac{(\lambda_1-\lambda_2)^2(\lambda_2+\lambda_3)}{(\lambda_2-\lambda_3)(\lambda_1\lambda_3-\lambda_2^2)}$, this case the only intersection point of $C$ and $L_{[t_0:t_1]}$ is $P$. In other cases, there are one more intersection point besides $P$, except when the root of the linear factor provides $P$ again. This happens if the following quadratic equation satisfies for $t_0$:
    \begin{gather*}
        \lambda_3(\lambda_1-\lambda_2)t_0^2 + \frac{(\lambda_1-\lambda_2)(-\lambda_1\lambda_2-3\lambda_1\lambda_3+3\lambda_2\lambda_3+\lambda_3^2)}{\lambda_2-\lambda_3}t_0
        \\ + \frac{(\lambda_1-\lambda_2)(2\lambda_1^2\lambda_2+2\lambda_1^2\lambda_3-\lambda_1\lambda_2^2-6\lambda_1\lambda_2\lambda_3-\lambda_1\lambda_3^2+2\lambda_2^2\lambda_3+2\lambda_2\lambda_3^2)}{(\lambda_2-\lambda_3)^2}=0
    \end{gather*}
    This has solutions $t_0^+=\frac{2\lambda_1-\lambda_2-\lambda_3}{\lambda_2-\lambda_3}$ and $t_0^-=\frac{\lambda_1\lambda_2+\lambda_1\lambda_3-2\lambda_2\lambda_3}{\lambda_3(\lambda_2-\lambda_3)}$. One can check that for $t_0^+$, the line $L_{[t_0^+:1]}$ passes through $[0:1:0]$, so it has two intersection points with $C$. But $C$ has only two common points with the line $W=0$, namely $[0:1:0]$ and $Q$, thus $L_{[t_0^-:1]}$ intersects $C$ only at $P$. We deduce that there are exactly two parameters $[t_0':1]$ and $[t_0^+:1]$ for which $L_{[t_0:t_1]}$ has only one intersection point with $C$, namely $P$. This means that the map $\mathbb{C}P^1\rightarrow C$ sending $[t_0:t_1]$ to the other intersection of $L_{[t_0:t_1]}$ and $C$ (besides $P$) is one-to-one, except for $[t_0':1]$ and $[t_0^+:1]$. That is, $C$ is homeomorphic with a $\mathbb{C}P^1$ with $[t_0':1]$ and $[t_0^+:1]$ identified. 
\end{proof}

\subsection{The trace coordinates}\label{sec:trace_coordinates}

We will use the so called trace coordinates on the Betti moduli space, introduced by Lawton \cite{Law1}, \cite{Law2}. Let $\rho$ be at the SL(3,$\mathbb{C}$) character variety of a rank 2 free group $\langle a,b \rangle$, and consider the character map $\mathcal{M}_B\rightarrow\mathbb{C}^9$: 
\begin{equation}
    \begin{gathered}
    \label{trace}
    \rho\mapsto \left(\operatorname{tr}(\rho(a)),\operatorname{tr}(\rho(b)),\operatorname{tr}(\rho(a)\rho(b)),\operatorname{tr}(\rho(a)^{-1}),\operatorname{tr}(\rho(b)^{-1}),\operatorname{tr}((\rho(a)\rho(b))^{-1}), \right.
    \\ \left. \operatorname{tr}(\rho(a)\rho(b)^{-1}),\operatorname{tr}(\rho(a)^{-1}\rho(b)),\operatorname{tr}(\rho(a)\rho(b)\rho(a)^{-1}\rho(b)^{-1}) \right)=:(x_1,x_2,...,x_9).
    \end{gathered}
\end{equation}
This way the above map gives coordinates on $\mathcal{M}_B$, under the condition $x_9^2-p(\underline{x})x_9+q(\underline{x})=0$, where $p$ and $q$ are two polynomials in the variables $\{x_i\}_{i=1}^9$, see \cite[Section 4]{Law1}. On the three-punctured sphere, where $\gamma_1,\gamma_2,\gamma_3$ are simple loops around the punctures ($0,1,\infty$), with a common base point, $[\gamma_1]$ and $[\gamma_2]$ generate the fundamental group of the curve, while $[\gamma_3]=([\gamma_1][\gamma_2])^{-1}$. With our special choice of eigenvalues for $\rho([\gamma_1]), \rho([\gamma_2])$, described in Section 2.2, we have the following: $x_1=x_2=x_3=x_4=x_5=x_6=0$, and $x_7,x_8,x_9$ are the nonzero coordinates. Indeed, the above character map gives a $\mathcal{M}_B(\textbf{c})\rightarrow\mathbb{C}^3$ morphism, and the polynomials simplify to $p(\underline{x})=x_7x_8-3$, and $q(\underline{x})=9-6x_7x_8+x_7^3+x_8^3$. Thus the condition $x_9^2-p(\underline{x})x_9+q(\underline{x})=0$ reads as
\begin{equation*}
    0=x_9^2-x_7x_8x_9+3x_9+9-6x_7x_8+x_7^3+x_8^3.
\end{equation*}
The homogenisation of this will again provide the equation of the curve at the infinity: $0=-x_7x_8x_9+x_7^3+x_8^3$.
\begin{lemma}\label{lem:compactifying_curve}
    This curve $0=-x_7x_8x_9+x_7^3+x_8^3$ is also of type $I_1$.
\end{lemma}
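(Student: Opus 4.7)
My plan is to follow the standard recipe for identifying Kodaira type $I_1$: exhibit a unique singular point, verify it is an ordinary node, and then use the genus formula to conclude that the normalization is $\mathbb{CP}^1$.

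First I would locate the singular locus by computing the Jacobian of $F(x_7,x_8,x_9) = -x_7x_8x_9 + x_7^3 + x_8^3$. The partial derivatives are $\partial_{x_7}F = 3x_7^2 - x_8x_9$, $\partial_{x_8}F = 3x_8^2 - x_7x_9$, and $\partial_{x_9}F = -x_7x_8$. The last equation forces $x_7 = 0$ or $x_8 = 0$, and either choice propagates through the first two equations to give $x_7 = x_8 = 0$. Hence the unique singular point in $\mathbb{CP}^2$ is $P_\infty = [0:0:1]$, which indeed lies on the curve.

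Next I would inspect the nature of the singularity. In the affine chart $x_9 = 1$, the defining equation becomes $x_7^3 + x_8^3 - x_7 x_8 = 0$, whose lowest-order term at the origin is the quadratic form $-x_7 x_8$. This factors as a product of two distinct linear forms, so $P_\infty$ is an ordinary double point (i.e. a node), with tangent cone consisting of the two lines $\{x_7 = 0\}$ and $\{x_8 = 0\}$.

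Finally, I would invoke the genus drop. A smooth plane cubic has arithmetic genus $p_a = (3-1)(3-2)/2 = 1$, and an ordinary node decreases the geometric genus by exactly one, so the normalization of our curve has genus $0$, i.e.\ is $\mathbb{CP}^1$. Thus the curve is an irreducible rational cubic with a single nodal singularity, which is precisely the description of Kodaira's fishtail $I_1$. The only real obstacle is the bookkeeping for the Jacobian calculation; everything else is a direct application of the definitions, closely paralleling the argument already given for equation~(\ref{divisor}) in the preceding proposition.
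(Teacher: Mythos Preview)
Your proof is correct and follows essentially the same approach as the paper: locate the unique singular point at $[0:0:1]$, check in the chart $x_9=1$ that the lowest-order part $-x_7x_8$ factors into distinct linear forms so the singularity is a node, and then conclude $I_1$ from the classification of singular plane cubics. The paper's argument is slightly terser (it appeals directly to Kodaira's list rather than spelling out the genus drop), but the logic is identical.
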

\begin{proof}
    One can easily see that $(x_7,x_8)=(0,0)$ is its only singular point. We may set near that point $x_9=1$, and the equation becomes $x_7x_8\approx 0$ up to terms of degree $3$. 
    This shows that the singularity is a node. Therefore the singular cubic curve must be a singular elliptic curve, and we can apply Kodaira's classification, which shows that an elliptic curve with one nodal singularity must be an $I_1$ curve.
\end{proof}

\begin{figure}[ht]
\centering
\includegraphics[width=7.0cm]{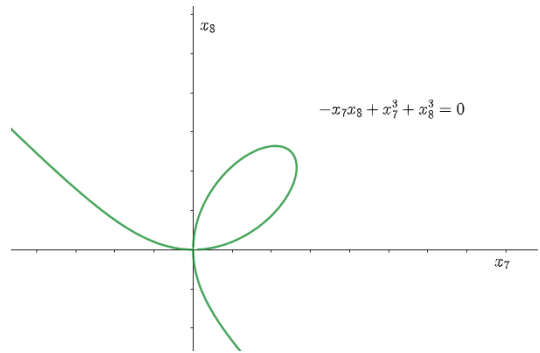}
\caption{The $-x_7x_8+x_7^3+x_8^3=0$ nodal (fishtail) curve.}
\end{figure}

\begin{corollary}
    The fundamental group of the body of the dual boundary complex of the compactifying divisor $D_B$ is cyclic, so  $\lvert \mathcal{D}\partial\mathcal{M}_B(\textbf{c})\rvert$ is of homotopy type $S^1$.
\end{corollary}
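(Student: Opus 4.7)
The plan is to deduce the corollary directly from Lemma~\ref{lem:compactifying_curve}, by unwinding the definition of the dual boundary complex in the special case of an $I_1$ curve. First I would recall from the definition given in Section~\ref{sec:prep} that $\mathcal{D}\partial\mathcal{M}_B(\textbf{c})$ has one vertex for each irreducible component of $D_B$ and one edge for each nodal intersection (including self-intersections) between components. By Lemma~\ref{lem:compactifying_curve}, $D_B$ is a curve of type $I_1$, so it has a single irreducible component $C$ with exactly one node, where two local analytic branches of $C$ meet transversely.

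Next I would translate this geometric picture into the combinatorial structure of $\mathcal{D}\partial\mathcal{M}_B(\textbf{c})$. Having a unique irreducible component produces a single vertex $v$, and the single node, being a self-intersection of $C$, contributes a single $1$-cell whose two endpoints are both attached to $v$. As a CW complex, $\lvert \mathcal{D}\partial\mathcal{M}_B(\textbf{c})\rvert$ is therefore one vertex with a single loop, which is precisely a model for the circle $S^1$.

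From this one immediately reads off the desired conclusion: $\lvert \mathcal{D}\partial\mathcal{M}_B(\textbf{c})\rvert$ is homotopy equivalent to $S^1$, and hence $\pi_1(\lvert \mathcal{D}\partial\mathcal{M}_B(\textbf{c})\rvert) \cong \mathbb{Z}$ is cyclic. The only mildly delicate point, and the one I would be most careful about, is the convention that the dual complex should record self-intersections of a single component as loop edges rather than ignoring them; this is the standard convention used in the Geometric $P=W$ literature (see for instance the references to Simpson's construction cited in Section~\ref{sec:prep}), and it is the convention that makes the statement of the Geometric $P=W$ conjecture meaningful here. Once that convention is in place, no further computation is needed.
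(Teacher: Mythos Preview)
Your proposal is correct and matches the paper's intended reasoning: the paper states this corollary without proof, treating it as an immediate consequence of Lemma~\ref{lem:compactifying_curve} and the definition of the dual complex, and later (at the start of Section~\ref{sec:proof_main}) makes exactly your point explicit, noting that the dual simplicial complex of the $I_1$ divisor has a single vertex $V$ with a single loop edge $E$ corresponding to the nodal point. Your careful remark about the convention that self-intersections contribute loop edges is well taken and is indeed the convention the paper silently adopts.
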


Notice that we used two different types of coordinates, but the two pictures coincide, as expected: for our special choice of parameters, the compactifying divisor of the Betti space is topologically the same as in the general description. According to Simpson \cite{Simp1}, the Riemann--Hilbert correspondence provides equivalence between vector bundles with integrable connection and representations of the fundamental group, via the monodromies of the connection around the punctures. Therefore our aim is to apply the trace coordinates to the monodromies of the connection, and investigate their asymptotic behaviour. 

%%%%%%%%%%%%%%%%%%%%%%%%%%%%%%%%%%%%%%%%%%%%%%%
\section{Asymptotic analysis of non-abelian Hodge correspondence}\label{sec:Dol}

Consider a Higgs bundle $(\mathcal{E},\theta_1 )\in \mathcal{M}_{\operatorname{Dol}}(\alpha)$ such that $H(\mathcal{E},\theta_1)=q_1$ where $q_1$ is defined in~\eqref{eq:q1}. 
For any nonzero complex number $\tau$, the Higgs bundle $(\mathcal{E},\overline{\partial}_E,\tau \theta_1)$ is also stable with the same parabolic structure, therefore it gives another element of $\mathcal{M}_{\operatorname{Dol}}(\alpha)$. 
For this Higgs bundle we clearly have $H(\mathcal{E},\tau \theta_1)=\tau^3 q_1 = t q_1$, and the real Hitchin's equation reads as
\begin{equation*}
    F_{h_{\tau}} + |\tau |^2[\theta_1,\theta_1^{\dag_{h_{\tau}}}]=0,
\end{equation*}
where $h_{\tau}$ is the unique Hermitian--Einstein metric solving the equation. 
We consider the 1-parameter family of Higgs bundles $(\mathcal{E},\overline{\partial}_E,\tau \theta_1)$, where $| \tau | \rightarrow\infty$, or equivalently the family $(\mathcal{E},\overline{\partial}_E,\theta_t)$, where $|t q| \rightarrow\infty$ in the Hitchin base $\mathcal{H}$ (for $\theta_t$, see~\eqref{companion}). 
In this section, we analyze the limiting behavior of the correspondence given in Theorem~\ref{naht_RH}~\ref{naht}. 

\subsection{The limit of the Hermitian--Einstein metric}

In~\cite{MocSz}, T.~Mochizuki and the second author developed tools to understand the limiting behaviour of the solution $h_{\tau}$ of Hitchin's real  equation, as $| \tau | \to\infty$. 
The aim of this section is to describe the asymptotic behaviour of $h_{\tau}$ under this limit in our particular case, based on~\cite{MocSz}. 

\subsubsection{Reduction to the case $t>0$}

Let us work in polar coordinates $t = R e^{i\varphi}$, see~\eqref{eq:polar}. 
Let us set\footnote{To match the assumption of~\cite{MocSz} completely, we should take a complex manifold for $\mathcal{S}$; an open annulus with trivial radial dependence of the objects is appropriate.}
\[
\mathcal{S} = S^1 = \{ e^{i\varphi}\vert \; \varphi \in [0, 2\pi ] \}
\]
Set $X=\mathbb{C}P^1$ and $\mathcal{D} = \mathcal{S} \times D$. 
Set 
\[
    \widetilde{\mathcal{X}} = \{ (e^{i\varphi}, (z, \zeta ))\vert \; \zeta^3 = e^{i\varphi} q(z) \} \subset \mathcal{S} \times T^* X. 
\]
The family version~\cite[Theorem~1.8]{MocSz} applied to this family $\widetilde{\mathcal{X}}$ and parameter $R>0$ (more precisely, $|\tau | = R^{1/3}$) converging to $\infty$ (instead of $\tau \in\mathbb{C}^{\times}$) shows that the results stated below hold for the parameter $\tau$ too. 
In the sequel, we will replace limits $R\to\infty$ by $|\tau |\to\infty$. 

\subsubsection{Convergence to a decoupled harmonic metric}

A Hermitian metric $h$ on $E$ is said to be decoupled harmonic for $(\mathcal{E},\theta)$ if the equations 
\[
    F_{h} = 0 = [\theta,\theta^{\dag_{h}}] 
\]
hold. 
It is clear that $h$ is decoupled harmonic for $(\mathcal{E},\theta)$ if and only if it is decoupled harmonic for $(\mathcal{E}, \tau \theta )$ for some (equivalently, any) $\tau \neq 0$.
\begin{theorem}~\cite[Theorem~1.4]{MocSz}\label{thm:asymptotic_decoupling}
    Fix a stable, generically semi-simple Higgs bundle $(\mathcal{E},\overline{\partial}_E,\theta)$ of $0$ parabolic degree and let $h_{\tau}$ be its Hermitian--Einstein metric. 
    If $\mathcal{E}$ is the direct image of a line bundle on the normalization of $\Sigma_t$, then the sequence $h_{\tau}$ converges to a decoupled harmonic metric $h_{\infty}$ locally on $\mathbb{C}P^1 \setminus D$ in the $C^{\infty}$ topology,  as $|\tau |\to \infty$. 
\end{theorem}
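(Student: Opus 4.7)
The plan is to construct a decoupled harmonic candidate $h_\infty$ directly out of the spectral data, and then show that $h_\tau$ is forced to converge to it by a Donaldson-functional style comparison, using the large parameter $|\tau|^2$ as a coercive term on the complement of the branch locus.

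\emph{Construction of the candidate.} By the hypothesis, $\mathcal{E} \cong (p_L)_* \mathcal{L}_{R,\varphi}$ for a line bundle on $\widetilde{\Sigma}_t$. Equip $\mathcal{L}_{R,\varphi}$ with a Hermitian metric whose Chern connection is flat on $\widetilde{\Sigma}_t \setminus \widetilde{D}$ and whose parabolic weights at $\widetilde{D}$ match the prescribed ones downstairs. Its pushforward yields a smooth Hermitian metric $h_\infty$ on $\mathcal{E}$ over $\mathbb{C}P^1 \setminus D$ in which the three eigen-line decomposition of $\theta$ (given by the three sheets of $\widetilde{\Sigma}_t$) is orthogonal. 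In such a frame $\theta$ is diagonal, hence $\theta^{\dag_{h_\infty}}$ is diagonal as well and $[\theta, \theta^{\dag_{h_\infty}}] = 0$ automatically; flatness on $\widetilde{\Sigma}_t \setminus \widetilde{D}$ descends to $F_{h_\infty} = 0$ on the complement of the branch locus. Thus $h_\infty$ is decoupled harmonic there.

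\emph{Comparison and convergence.} Form the positive self-adjoint endomorphism $s_\tau = h_\infty^{-1} h_\tau$ of $(\mathcal{E}, h_\infty)$ and let
\[
    \sigma_\tau = \operatorname{tr}(s_\tau) + \operatorname{tr}(s_\tau^{-1}) - 2\operatorname{rank}\mathcal{E} \geq 0
\]
be the pointwise discrepancy of the two metrics. Subtracting Hitchin's real equation for $h_\tau$ from the decoupled one for $h_\infty$ and running the standard Donaldson / Simpson main-estimate computation produces, on any $U \Subset \mathbb{C}P^1 \setminus D$, a semilinear elliptic differential inequality for $\sigma_\tau$ whose coercive term is of order $|\tau|^2$, with coefficient bounded below by a positive constant depending on the pairwise separation of the three sheets of $\widetilde{\Sigma}_t$ over $U$; this coercivity is exactly what the decoupling $[\theta, \theta^{\dag_{h_\infty}}] = 0$ buys. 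A maximum-principle argument with an exponential barrier, combined with an a priori bound on $\sigma_\tau$ on $\partial U$, forces pointwise decay $\sigma_\tau \to 0$ on $U$, hence $C^0$-convergence $h_\tau \to h_\infty$ locally on $\mathbb{C}P^1 \setminus D$. Elliptic bootstrap for the Hitchin system (iteratively differentiating the nonlinear PDE and invoking interior Schauder estimates) upgrades the convergence to $C^\infty_{\operatorname{loc}}$.

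\emph{Main obstacle.} The delicate point is the behaviour of the comparison near the ramification divisor, where the three spectral sheets collide, the candidate $h_\infty$ degenerates, and the lower bound on the coercive coefficient collapses to zero. Controlling the Donaldson functional uniformly on a family of shrinking neighbourhoods of $\widetilde{D}$, so that the maximum-principle argument survives a compactum-by-compactum exhaustion of $\mathbb{C}P^1 \setminus D$, requires the equivariant local-model analysis of \cite{MocSz} around each cyclic branch point, grounded in explicit harmonic metrics on the model $\zeta^3 = q$. Granted this uniform local control, the global $C^\infty_{\operatorname{loc}}$-convergence follows.
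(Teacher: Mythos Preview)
The paper does not prove this theorem at all: it is quoted verbatim as \cite[Theorem~1.4]{MocSz} and used as a black box, with no argument given beyond the citation. So there is no ``paper's own proof'' to compare against; the present paper only \emph{uses} the result, and later (Lemma~\ref{lem:pushforward_metric}, Lemma~\ref{lem:canonical_decomposable_filtration}) identifies the limiting metric $h_\infty$ in its specific $\widetilde{E}_6$ situation via the canonical decomposable filtration machinery of \cite{MocSz}.

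Your sketch is a plausible outline of how such a theorem is proved, and the construction of $h_\infty$ as the orthogonal pushforward of a flat metric on the spectral line bundle is exactly what the paper records in Section~\ref{sec:Hermitian_line} and Lemma~\ref{lem:pushforward_metric}. But as a proof it has a real gap and a circularity. The gap: your maximum-principle step requires ``an a priori bound on $\sigma_\tau$ on $\partial U$'', which you assert without justification. Since $U$ is an arbitrary compact subset of $\mathbb{C}P^1\setminus D$, its boundary can be pushed arbitrarily close to the ramification points, and you have no independent control of $h_\tau$ there; this is precisely the hard input, not a side condition one can assume. The circularity: in your final paragraph you concede that the uniform control near the branch points ``requires the equivariant local-model analysis of \cite{MocSz}'', i.e.\ you are invoking the very paper whose theorem you are attempting to reprove. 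Stripped of that appeal, what remains is the (correct) observation that once one has \emph{some} uniform $C^0$ bound on $s(h_\infty,h_\tau)$ near the branch locus, the interior decay and bootstrap are routine; but obtaining that bound is the content of the theorem.
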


\begin{remark}
    The generic semi-simplicity and direct image of a line bundle conditions trivially hold in our particular case.
    A priori, the limit $h_{\infty}$ could depend on the argument $\varphi$ of $t$. 
    However, the interpretation in terms of canonical decomposable filtration (see the next subsection) shows that this is not the case. 
\end{remark}

\subsubsection{Canonical decomposable filtration}

A crucial notion for Theorem~\ref{thm:asymptotic_decoupling} is that of canonical decomposable filtration $\mathcal{P}^{\star}_*$ for the Higgs bundle, see~\cite[Definition~5.10]{MocSz} (and~\cite[Claim~2.1]{Sz_BNR} for a similar condition in the unramified irregular situation). 
If the Higgs bundle is generically semi-simple (which clearly holds in the case at hand), then in general there exists a degree $\ell$ covering $\varphi_{\ell}$ of $X=\mathbb{C}P^1$ ramified over $D$ such that locally near the points $D$ there exists a direct sum decomposition 
\[
    \varphi_{\ell}^* (\mathcal{E}, \theta) \cong \bigoplus_{i=1}^r (\mathcal{E}_i , \theta_i )
\]
with $\mathcal{E}_i$ of rank $1$. (In our case, we have $\ell = r= 3$.) 
A filtration $\mathcal{P}_*$ is said to be decomposable if for any $\alpha\in\mathbb{R}$ we have 
\[
    \varphi_{\ell}^* \mathcal{E}_{\alpha} = \bigoplus_{i=1}^r \mathcal{E}_{i,\alpha}, 
\]
where we use $\mathcal{E}_{\alpha}$ for the associated $\mathbb{R}$-parabolic sheaves. 

For our purposes, we only need the case of a maximally ramified spectral curve, which implies that it is locally connected, i.e. $S=1$ in~\cite[Section~5.2.2]{MocSz}. 
Let $\mathcal{P}_* \operatorname{det}\, (\mathcal{E})$ be the filtration induced by~\eqref{eq:filtration} on $\operatorname{det}\, (\mathcal{E})$. 
In this case,~\cite[Proposition~5.13]{MocSz} states that there exists a unique decomposable filtered bundle $\mathcal{P}^{\star}_* (\mathcal{E})$ such that  $\operatorname{det}\, \mathcal{P}^{\star}_*(\mathcal{E}) = \mathcal{P}_* \operatorname{det}\, (\mathcal{E})$. 
It is easy to see that with our choices of weights $\underline{\alpha}_P$, $\mathcal{P}_* \operatorname{det}\, (\mathcal{E})$ is the trivial filtration (simply because the weights at $P$ add up to $0$). 
We call $\mathcal{P}^{\star}_* (\mathcal{E})$ the canonical decomposable filtration.

\subsubsection{The induced Hermitian metric on the determinant line bundle}

We study the Hermitian metric $h_{\tau, \operatorname{det}\, E}$ on $\operatorname{det}\, E$ induced by $h_{\tau}$. 

\begin{lemma}
    The induced parabolic bundle $\operatorname{det}\, \mathcal{E}$ is the trivial line bundle with trivial parabolic filtration. 
    With respect to a suitable global trivialization of $\operatorname{det}\, \mathcal{E}$, we have $h_{\operatorname{det}\, \mathcal{E}}\equiv 1$. 
\end{lemma}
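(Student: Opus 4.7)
The plan is to decompose the statement into the parabolic line bundle part and the metric part, and treat each in turn.

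For the parabolic bundle part, I would first note that the parabolic filtration on $\det\mathcal{E}$ induced from~\eqref{eq:filtration} has a single jump at the weight $\alpha_P^1+\alpha_P^2+\alpha_P^3$ at each $P\in D$; with the choices of Section~\ref{sec:choices} this sum equals $-\tfrac13+0+\tfrac13=0$, so the induced parabolic filtration on $\det\mathcal{E}$ is trivial. The ordinary degree of $\det\mathcal{E}$ equals $\deg\mathcal{E}$, which by Lemma~\ref{lem:vanishing_char_coeffs}~(i) combined with~\eqref{stab1} (summing the vanishing weights over $P\in D$) is zero. Every degree-$0$ line bundle on $\mathbb{C}P^1$ is trivial, so $\det\mathcal{E}\cong\mathcal{O}_{\mathbb{C}P^1}$ as parabolic line bundles.

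For the Hermitian metric the key input is Lemma~\ref{lem:vanishing_char_coeffs}~(ii): since $\operatorname{tr}\theta\equiv 0$, the induced Higgs field on $\det\mathcal{E}$ vanishes identically. Taking the trace of Hitchin's real equation $F_h+[\theta,\theta^{\dag_h}]=0$ then reduces on the determinant to $F_{h_{\det\mathcal{E}}}=0$, so the induced Chern connection on $\det\mathcal{E}$ is flat. Picking a global trivialization of the trivial line bundle $\det\mathcal{E}$ and writing $h_{\det\mathcal{E}}=e^{u}$ for a smooth real function $u$ on $\mathbb{C}P^1\setminus D$, flatness of the Chern connection becomes $\partial\bar\partial u=0$, i.e. $u$ is harmonic on $\mathbb{C}P^1\setminus D$.

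To finish I would invoke tameness of $h$: since all parabolic weights on $\det\mathcal{E}$ equal zero, the induced metric has polynomial growth of exponent $0$ near each puncture, hence $u$ is bounded near $0,1,\infty$. A bounded harmonic function on $\mathbb{C}P^1\setminus D$ extends across the three removable singularities to a harmonic function on the compact Riemann surface $\mathbb{C}P^1$, hence is a real constant. Rescaling the chosen trivialization by $e^{-u/2}$ absorbs this constant and yields $h_{\det\mathcal{E}}\equiv 1$. The only step I see as requiring genuine care is the final one, where one must match the tameness bound on $h$ with the vanishing of the parabolic weights on $\det\mathcal{E}$ to guarantee boundedness, and thus removability, of $u$ at each $P\in D$; the earlier algebraic steps are formal.
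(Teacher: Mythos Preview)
Your proof is correct and follows essentially the same route as the paper's: both compute the induced parabolic weights to be zero, identify $\det\mathcal{E}$ with $\mathcal{O}_{\mathbb{C}P^1}$ via $\deg\mathcal{E}=0$, use $\operatorname{tr}\theta=0$ to reduce to a flat metric on the trivial line bundle, and then rescale to $1$. The only cosmetic difference is that where the paper invokes uniqueness of the Hermitian--Einstein metric for the constancy step, you unpack this explicitly via the removable singularities theorem for bounded harmonic functions, which is exactly what that uniqueness amounts to in this abelian setting.
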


\begin{proof}
We observe that since $\operatorname{tr}\, \theta\equiv 0$, i.e. the Higgs field takes values in $\mathfrak{sl}(3,\mathbb{C})$, the metric $h_{\tau, \operatorname{det}\mathcal{E}}$ turns $\operatorname{det}E$ into a flat unitary parabolic line bundle. 
Moreover, the parabolic weight of $\operatorname{det}\mathcal{E}$ at $P\in D$ is the sum of the parabolic weights of $E$ at $P$, so it vanishes. 
Now, $\operatorname{det}\mathcal{E}$ is a line bundle of degree $0$ over $\mathbb{C} P^1$, therefore it is isomorphic to $\mathcal{O}_{\mathbb{C} P^1}$. 
Therefore, by uniqueness of the Hermitian--Einstein metric, we see that with respect to a global trivialization of $\mathcal{O}$, the metric $h_{\operatorname{det}\mathcal{E}}$ is some constant. 
By rescaling the trivialization of $\mathcal{O}$, we can arrange  $h_{\operatorname{det}\mathcal{E}}\equiv 1$.%, just as in~\cite[Corollary 3.3]{FMSW}. 
\end{proof}

\subsubsection{Limiting metric}

Let us be given a Hermitian metric on a tame harmonic bundle $(\mathcal{E}, \theta )$ with parabolic divisor $D$. 
Let $P\in D$ and $z$ be a holomorphic chart centered at $P$. 
Simpson's theory~\cite{Simp3} induces a metric filtration on $\mathcal{E}$ given by 
\[
    \mathcal{P}^h_a \mathcal{E} (U ) = \left\{ f\in \mathcal{E} (U \setminus P ) \vert \; \mbox{for any} \; \varepsilon > 0, \;  |f|_h = O\left( |z|^{-a-\varepsilon}\right) \right\} .
\]
By~\cite[Lemma~7.4, Theorem~7.5]{MocSz}, the metric $h_{\infty}$ appearing in Theorem~\ref{thm:asymptotic_decoupling} is the unique decoupled harmonic metric satisfying the properties: 
\begin{enumerate}
    \item $\mathcal{P}^{h_{\infty}}_*$ agrees with the canonical decomposable filtration $\mathcal{P}^{\star}_*$ on $\mathcal{E}$, 
    \item $\operatorname{det} h_{\infty} = h_{\operatorname{det}\, \mathcal{E}}$. 
\end{enumerate}

In Lemma~\ref{lem:canonical_decomposable_filtration} we will determine $\mathcal{P}^{\star}_* \mathcal{E}$.

\subsubsection{Hermitian metric on the spectral line bundle}\label{sec:Hermitian_line}

We use $\zeta \mapsto \tau \zeta$ with $\tau^3 = t$ to identify $\Sigma_t$ with $\Sigma_1$, compatibly with $p_L$. 
As previously discussed, let $\mathcal{L}$ be the line bundle such that $\mathcal{E}=p_*\mathcal{L}$. 

Now, let us equip $\mathcal{L}\rightarrow\Sigma_1$ with a parabolic structure. 
For all the lifted points $\widetilde{P}\in\widetilde{D}$ on the spectral curve let us set the parabolic weights to be $\widetilde{\alpha}_{\widetilde{P}}=-1$.%, for all $\widetilde{P}$. , chosen from the unit interval $[-1,0)$. 
Then $\operatorname{pdeg}\, \mathcal{L}=0$, because $\operatorname{deg}\, \mathcal{L}=3$. 
By~\cite{Simp3},~\cite{Biq}, there exists a unique metric $h_{\mathcal{L}}$ that  
\begin{enumerate}
    \item is adapted to the above parabolic structure 
    \item solves the abelian Hitchin's equation, and 
    \item induces the metric $h_{\operatorname{det}\mathcal{E}}$. 
\end{enumerate} 
%Furthermore, $h_{\mathcal{L}}$ is unique up to a constant factor. %Hermitian-Einstein 

\begin{remark}
    For $\mathcal{L} = \mathcal{L}_0 = \mathcal{O}_{\widetilde{\Sigma}_t}(\widetilde{D})$, $h_{\mathcal{L}}$ induces on 
    \[
    \mathcal{L}_0 \otimes \mathcal{O}_{\widetilde{\Sigma}_t}(- \widetilde{D}) \cong \mathcal{O}_{\widetilde{\Sigma}_t} 
    \]
    a flat metric compatible with the trivial parabolic structure.
    It follows that this induced metric is $h\equiv 1$ with respect to a global trivialization. 
\end{remark}

We define the orthogonal pushforward $p_* h_{\mathcal{L}}$ of the metric $h_{\mathcal{L}}$ as the unique metric on $p_* \mathcal{L}$ for which
\begin{enumerate}
\item the eigenspaces of $\theta$ are orthogonal to each other, i.e. for any local sections $l_1, l_2$ of $\mathcal{L}$ supported on different sheets of $\widetilde{\Sigma}_t$, we have 
$$
  p_* h_{\mathcal{L}} ( (p_L)_* l_1, (p_L)_* l_2 ) =  0 ,
$$ 
\item for any local section $l$ of $\mathcal{L}$ supported on a single sheet of $\widetilde{\Sigma}_t$, we have 
$$
  p_* h_{\mathcal{L}} ( (p_L)_* l, (p_L)_* l ) =   h_{\mathcal{L}} (l,l) 
$$ 
\end{enumerate}

By~\cite[Proposition~2.3]{MocSz}, there exists some flat Hermitian metric on $\mathcal{L}$ whose orthogonal pushforward is $h_{\infty}$. 
We next show that these definitions of $h_{\infty}$ and $h_{\mathcal{L}}$ are compatible. 

\begin{lemma}\label{lem:pushforward_metric}
The metric $h_{\infty}$ agrees with the orthogonal pushforward $p_* h_{\mathcal{L}}$ of $h_{\mathcal{L}}$. 
\end{lemma}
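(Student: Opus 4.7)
The plan is to invoke the characterisation of $h_{\infty}$ recalled just above the lemma: namely, $h_{\infty}$ is the unique decoupled harmonic metric on $\mathcal{E}$ whose metric filtration equals the canonical decomposable filtration $\mathcal{P}^{\star}_*\mathcal{E}$ and whose determinant equals $h_{\operatorname{det}\mathcal{E}}$. It therefore suffices to verify these three properties for the candidate metric $p_* h_{\mathcal{L}}$.

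First I would check that $p_* h_{\mathcal{L}}$ is decoupled harmonic. Since $h_{\mathcal{L}}$ solves the abelian Hitchin equation on $\widetilde{\Sigma}_t$, this follows from~\cite[Proposition~2.3]{MocSz}, which was already used above to show that \emph{some} flat Hermitian metric on $\mathcal{L}$ pushes forward to $h_{\infty}$; the same recipe applied to our $h_{\mathcal{L}}$ yields a decoupled harmonic metric on $\mathcal{E}$.

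Next I would verify the determinant condition. By definition of the orthogonal pushforward, the three local eigen-sections of $\theta$ are pairwise orthogonal with norms equal to their norms in $h_{\mathcal{L}}$, so $\det(p_* h_{\mathcal{L}})$ equals the pushforward (in the sense of norms) of $h_{\mathcal{L}}$ along the triple cover. Property (3) in the definition of $h_{\mathcal{L}}$ in Section~\ref{sec:Hermitian_line} states precisely that this induced metric on the determinant is $h_{\operatorname{det}\mathcal{E}}$, so condition (2) holds.

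The main step, and the only genuinely computational one, is the comparison of parabolic filtrations. Pick $P\in D$ and let $\widetilde{P}$ be the unique point of $\widetilde{\Sigma}_t$ above it; let $\widetilde{z}$ be a uniformiser at $\widetilde{P}$ with $\widetilde{z}^3 = z$, so that the local Galois group $\mathbb{Z}/3\mathbb{Z}$ acts by $\widetilde{z}\mapsto \varepsilon \widetilde{z}$. A local frame $e$ of $\mathcal{L}$ adapted to the parabolic weight $\widetilde{\alpha}_{\widetilde{P}}=-1$ satisfies $|e|_{h_{\mathcal{L}}}\sim |\widetilde{z}|$. Decomposing the pushforward into Galois-isotypical components $\widetilde{z}^k e$ for $k=0,1,2$, one reads off the growth rates $|z|^{(k+1)/3-1}$, i.e.\ parabolic weights $-1/3, 0, 1/3$, which are exactly the weights $\underline{\alpha}_P$ fixed in Section~\ref{sec:choices}. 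This identifies $\mathcal{P}^{p_* h_{\mathcal{L}}}_* \mathcal{E}$ locally near $P$ with a decomposable filtration on $\varphi_3^* \mathcal{E}$ whose determinant recovers the trivial filtration on $\operatorname{det}\mathcal{E}$; by the uniqueness statement of~\cite[Proposition~5.13]{MocSz} recalled above, this decomposable filtration must coincide with $\mathcal{P}^{\star}_*\mathcal{E}$ (a forthcoming lemma, referenced as Lemma~\ref{lem:canonical_decomposable_filtration}, makes this match explicit). The hard part is precisely this bookkeeping of weights across the ramified cover, but once the identification $\widetilde{\alpha}_{\widetilde{P}}=-1\leftrightarrow (-1/3,0,1/3)$ is established, all three conditions hold and uniqueness of $h_{\infty}$ forces $h_{\infty}=p_* h_{\mathcal{L}}$.
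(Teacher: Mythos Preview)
Your strategy—invoke the uniqueness characterisation of $h_\infty$ from~\cite[Lemma~7.4, Theorem~7.5]{MocSz} and verify the three conditions for $p_* h_{\mathcal{L}}$—is exactly the paper's. The paper simply compresses the whole verification into a single appeal to~\cite[Proposition~5.34]{MocSz} together with the forward reference to Lemma~\ref{lem:canonical_decomposable_filtration}, whereas you unpack the decoupled-harmonic, determinant, and filtration checks separately.

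One caveat on your filtration step. You compute weights in the Galois-isotypical frame $\widetilde{z}^k e$ (i.e.\ in the basis $1,\zeta,\zeta^2$) and land on $-1/3,0,1/3=\underline{\alpha}_P$. But what the uniqueness criterion actually requires is \emph{decomposability} of the pulled-back filtration with respect to the eigen-splitting of $\theta$ on $\varphi_3^*\mathcal{E}$, and Lemma~\ref{lem:canonical_decomposable_filtration} identifies $\mathcal{P}^{\star}_*\mathcal{E}$ as the \emph{trivial} filtration, not the one with jumps at $\pm 1/3$. The Galois-isotypical basis and the eigenbasis $\mathbf{e}_i$ differ by the Vandermonde matrix in $\varepsilon$, so recovering $\underline{\alpha}_P$ in the former does not by itself establish decomposability in the latter. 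What really makes your argument go through is something you do not quite say: by the very definition of the orthogonal pushforward, $p_*h_{\mathcal{L}}$ is block-diagonal in the eigen-decomposition, so its metric filtration on $\varphi_3^*\mathcal{E}$ is automatically decomposable; then the determinant check plus uniqueness from~\cite[Proposition~5.13]{MocSz} finish the job. The explicit weight computation is thus a detour (and contains a slip: from $|e|\sim|\widetilde{z}|$ and $\widetilde{z}^3=z$ one gets $|\widetilde{z}^k e|\sim|z|^{(k+1)/3}$, not $|z|^{(k+1)/3-1}$).
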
 

\begin{proof}
The filtration induced on $\mathcal{E}$ by the parabolic filtration of $\mathcal{L}$ defined above is the canonical decomposable one (see Lemma~\ref{lem:canonical_decomposable_filtration}). 
The statement therefore essentially reduces to~\cite[Proposition~5.34]{MocSz}. 
\end{proof}

\subsubsection{Cover of the underlying curve}

We now set $N=\overline{B}_{\delta}(P)$ for $P\in D$ and some $0 < \delta \ll 1$ (see Figure~\ref{fig:annulus}). 
% It then follows from the theorem that on simply connected subsets of $\mathbb{C}P^1\setminus \bigcup_{P\in D} B_{\delta}(P)$, with respect to some gauge, $\nabla_{h_{\tau}}^+, \theta, \theta^{\dag_{h_{\tau}}}$ can be simultaneously diagonalized, up to exponentially small error terms. 
% Our aim is to describe the asymptotic behavior of the parallel transport operator of the flat connection~\eqref{connection} associated to $(\mathcal{E}, \tau \theta_1 )$ along a loop around the points $P\in D$, lying in $\mathbb{C}P^1\setminus \bigcup_{P\in D} B_{\delta}(P)$. 
%Although globally we can not handle what happens to the asymptotically diagonalizing frame of the Higgs field, therefore we make the following claim.
%\begin{claim}
In the sequel, we will make use of a cover of the surface $\mathbb{C}P^1\setminus D$ by open regions $U_1, U_2$ where  
\begin{align}
   U_1 & = \mathbb{C}P^1\setminus \bigcup_{P\in D} \overline{B}_{\delta}(P) \label{eq:U1} \\
   U_2 & = \bigcup_{P\in D} B_{2\delta}(P) . \label{eq:U2}
\end{align}
We pick a loop $\gamma_P$ in the annulus $U_1 \cap U_2 = B_{2\delta}(P) \setminus \overline{B}_{\delta}(P)$ winding around $P$ once in positive direction. 
On the other hand, we pick a path $\eta_P$ in $U_1$ from the base point $z_0$ to the starting point of $\gamma_P$, and also a path $\sigma_P$ from $P$ to the starting point of $\gamma_P$, see Figure~\ref{fig:annulus}.  
%Now on $B_{2\delta}(P)$ there is a "global", asymptotically diagonalizing frame for $\theta$ and $\theta^{\dag_{h_{\tau}}}$, and we can handle its monondromy around the point $P\in D$.
%\end{claim}

\begin{figure}[ht]
\centering
\includegraphics[width=16.0cm]{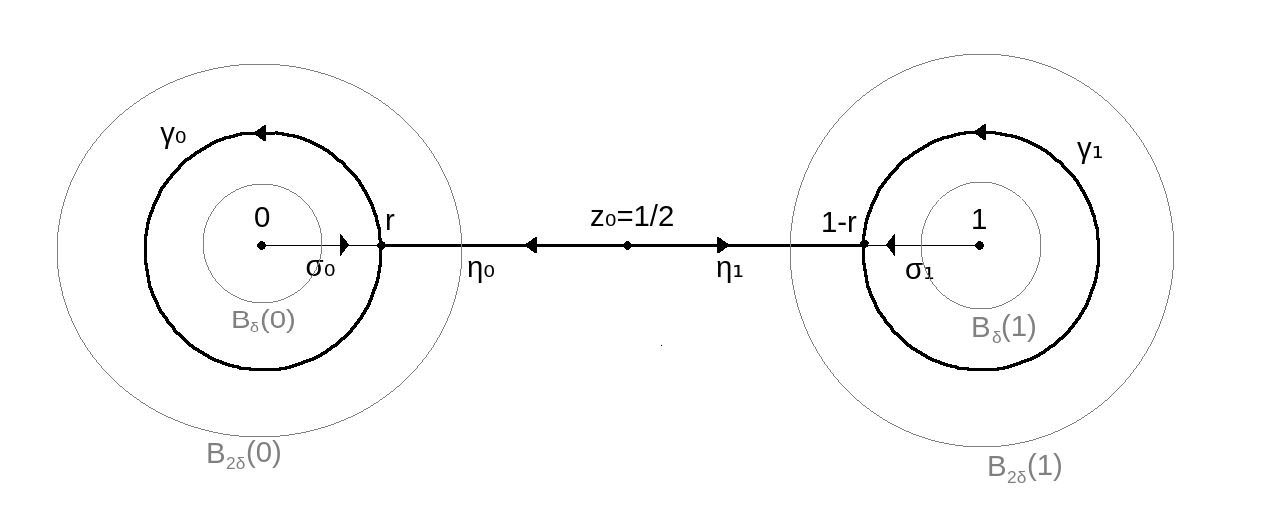}
\caption{The base point $z_0=\frac{1}{2}$ and path $\eta_P$ approaching the point $P\in D$, and the setup near $P$: the loop $\gamma_P$ lying in the annulus $B_{2\delta}(P)\setminus \overline{B}_{\delta}(P)$, and $\sigma_P$ connecting $P$ and the starting point of $\gamma_P$, where $P\in\{0,1\}$.}
\label{fig:annulus}
\end{figure}

\subsection{Analysis away from the ramification divisor}

%The first type of regions lie away from the points of $D$, they lie
In this section we will work over the set~\eqref{eq:U1}. 
Let $U \subset U_1$ be any simply connected open subset such that 
\[
p_L^{-1}(U) \cap \widetilde{\Sigma}_t = V_1 \sqcup V_2 \sqcup V_3, 
\] 
the restriction of $p_L$ to each $V_i$ being a homeomorphism onto $U$. 

\subsubsection{Induced unitary frame of $\mathcal{E}$}
Over $U$, the relation $(p_L)_* \mathcal{L} = \mathcal{E}$ reads as  
$$
    \restr{\mathcal{E}}{U} = \restr{\mathcal{L}}{V_1}\oplus \restr{\mathcal{L}}{V_2}\oplus \restr{\mathcal{L}}{V_3}. 
$$
Theorem~\ref{thm:asymptotic_decoupling} states in particular that $[\theta, \theta^{\dag_{h_{\infty}}} ] =0$. 
Let $\mathbf{l}_i$ be a $h_{\mathcal{L}}$-unitary trivialization of $\restr{\mathcal{L}}{V_i}$ and define 
\[
    \mathbf{f}_i = (p_L)_* \mathbf{l}_i . 
\]
By Lemma~\ref{lem:pushforward_metric}, 
\begin{equation}
    \label{frame2}
    (\mathbf{f}_1,\mathbf{f}_2,\mathbf{f}_3)
\end{equation}
is a smooth $h_{\infty}$-unitary trivialization of $\restr{E}{U}$. 
We will use this frame to express $\tau \theta_1 , \nabla_{\tau}$ throughout this section. 
In particular, $\theta, \theta^{\dag_{h_{\infty}}}$ both have diagonal matrices, and the matrix of $h_{\infty}$ is the $3\times 3$ identity. 

%Let us equip $End (E)$ with the Hermitian metric induced by $h_{\infty}$ and the Killing form. 
%It follows from \cite[Theorem~1.7]{MocSz} using the triangle and Cauchy--Schwartz inequalities that there exists $C>0, \epsilon >0$ such that 
%\[
%    \Vert [\theta, \theta^{\dag_{h_{\tau}}} ] \Vert_{L^2(U_1)} \leq C e^{-\epsilon t} \Vert \theta \Vert_{L^2(U_1)}^2 
%\]
%holds. 

\subsubsection{Flat unitary connection induced by $h_{\infty}, \mathcal{E}$}\label{sec:flat_unitary}

The holomorphic structure on the line bundle $\mathcal{L}$ is provided by the operator $\overline{\partial}_{\mathcal{L}}=\overline{\partial}+ \mu$, for some $\mu\in\Omega^{0,1}(\widetilde{\Sigma}_t)$. 
With respect to~\eqref{frame2}, the holomorphic structure of $\restr{\mathcal{E}}{U}$ reads as 
\begin{equation*}
    \overline{\partial}_E=\overline{\partial}+
    \begin{pmatrix}
        \mu_1 & 0 & 0\\
        0 & \mu_2 & 0\\
        0 & 0 & \mu_3
    \end{pmatrix},
\end{equation*}
with $\mu_i$ the restriction of $\mu$ to $V_i$. 
The metric $h_{\mathcal{L}}$ is Hermitian--Einstein, thus the associated unitary connection $\nabla_{h_{\mathcal{L}}}^{+}$ is flat on each $V_i$. 
Let us denote its connection form by 
\[
u_i=\mu_i-\overline{\mu}_i\in \Omega^1(V_i ,\sqrt{-1} \mathbb{R}) .
\]
%so that $p^{0,1}u_1=\mu_1$. 
Then the flat $\operatorname{U(1)}^{\times 3}$-connection form on $\restr{\mathcal{E}}{U}$, associated with $h_{\infty}$ is
\begin{equation*}
    \nabla_{h_{\infty}}^{+}=
    \begin{pmatrix}
        u_1 & 0 & 0\\
        0 & u_2 & 0\\
        0 & 0 & u_3
    \end{pmatrix}.
\end{equation*}

\subsubsection{Family of flat connections induced by $h_{\infty}, \theta$}

As we have already mentioned, the Higgs field is diagonal in the frame~\eqref{frame2}, with its eigenvalues~\eqref{Q} as diagonal elements. 
We are ready to express the integrable connection associated to the decoupled metric $h_{\infty}$ over $U$ with respect to the unitary frame~\eqref{frame2}:
\begin{equation}
    \begin{gathered}
    \label{conn1}
    \nabla_{\tau}^{\operatorname{appr}} = \nabla_{h_{\infty}}^{+} + \tau \theta_1 + (\tau \theta_1)^{\dag_{h_{\infty}}} = \nabla_{h_{\infty}}^{+} + \tau \theta_1 + h_{\infty}^{-1}\overline{\tau \theta_1}^T h_{\infty} =
    \\ = \operatorname{d} + 
    \left( \begin{smallmatrix}
        u_1 & 0 & 0\\
        0 & u_2 & 0\\
        0 & 0 & u_3
    \end{smallmatrix}\right)  + 
    \left( \begin{smallmatrix}
        \tau Q_1+\overline{\tau Q_1} & 0 & 0\\
        0 & \tau Q_2+\overline{\tau Q_2} & 0\\
        0 & 0 & \tau Q_3+\overline{\tau Q_3}
    \end{smallmatrix}\right) =
    \\ = \operatorname{d} + \left( \begin{smallmatrix}
        u_1 + 2\operatorname{Re}( \tau Q_1) & 0 & 0\\
        0 & u_2 + 2\operatorname{Re}(\varepsilon \tau Q_1) & 0\\
        0 & 0 & u_3 + 2\operatorname{Re}(\varepsilon^2 \tau Q_1)
    \end{smallmatrix} \right)
    \end{gathered}
\end{equation}
%with the simplifying notation $Q_{1,t}=Q_t$. 
Notice that the formula 
\[
    u_i + 2\operatorname{Re}(\varepsilon^{i-1} Q_t)
\]
is a decomposition into imaginary and real part. 

\begin{remark}
Over the Hitchin section, i.e. for $ \mathcal{L} = \mathcal{L}_0 = \mathcal{O}_{\widetilde{\Sigma}_t}(\widetilde{D})$, we have $\mu\equiv 0$ for the $(0,1)$-partial connection defining $\mathcal{L}_0(- \widetilde{D})$. 
This then implies $u_i \equiv 0$ for all $1\leq i \leq 3$, and the limiting connection simplifies to 
\[
      \operatorname{d} + 2\operatorname{Re}
      \left( \begin{smallmatrix}
        \tau Q_1 & 0 & 0\\
        0 & \varepsilon \tau Q_1 & 0\\
        0 & 0 & \varepsilon^2 \tau Q_1
    \end{smallmatrix} \right).
\]
\end{remark}

\subsubsection{Estimate for the associated integrable connection}

Let $h_{\tau}$ denote the metric solving the Hitchin's equations for $(\mathcal{E}, \tau \theta_1 )$, and $\nabla_{\tau}$ the associated integrable connection~\eqref{connection}. 
On the other hand, we have the approximate flat connection~\eqref{conn1}. 
We consider the Sobolev norm $\Vert \cdot \Vert_{L^2_l(U, h_{\infty})}$ on sections of $End(E)\otimes \Omega^1$ defined using the standard Riemannian metric $|\operatorname{d} \! z|^2$ on $\mathbb{C}P^1$, the fiber metric $h_{\infty}$ and the Killing form. 
\begin{prop}
    Fix $l\in\mathbb{N}$. 
    Over any simply connected open subset $U$ of $U_1$, there exist $C_1>0, \epsilon_1, R_1 >0$ depending only on $l$ and $\delta>0$ (used to define $U_1$), such that for all $|\tau |>R_1$ 
    \begin{equation*}
        \Vert \nabla_{\tau} - \nabla_{\tau}^{\operatorname{appr}} \Vert_{L^2_l(U, h_{\infty})} \leq C_1 e^{-\epsilon_1 |\tau |}.
    \end{equation*}
    In particular, there exists $C_2>0, \epsilon_2, R_1>0$ depending only on $\delta>0$ such that for all $|\tau |>R_1$ 
     \begin{equation*}
        \Vert \nabla_{\tau} - \nabla_{\tau}^{\operatorname{appr}} \Vert_{C^0(U, h_{\infty})} \leq C_2 e^{-\epsilon_2 |\tau |}.
    \end{equation*}
\end{prop}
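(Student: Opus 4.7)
The plan is to compare the two integrable connections by introducing the self-adjoint (with respect to $h_{\infty}$) endomorphism $s_{\tau}\in\Gamma(U,\operatorname{End}(E))$ defined by $h_{\tau}=h_{\infty}\cdot e^{s_{\tau}}$, and to show that $s_{\tau}$ decays exponentially in $|\tau|$ on $U\subset U_1$, together with all its derivatives. Since
\[
\nabla_{\tau}-\nabla_{\tau}^{\operatorname{appr}}=\bigl(\nabla_{h_{\tau}}^{+}-\nabla_{h_{\infty}}^{+}\bigr)+\tau\bigl(\theta_1^{\dag_{h_{\tau}}}-\theta_1^{\dag_{h_{\infty}}}\bigr),
\]
both summands vanish when $s_{\tau}=0$, and a standard Taylor expansion (using $h_{\tau}^{-1}=e^{-s_{\tau}}h_{\infty}^{-1}$) expresses them as smooth functionals of $s_{\tau}$ and $\partial s_{\tau}$ vanishing at $s_{\tau}=0$. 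The first summand is of the schematic form $h_{\infty}^{-1}\partial(e^{s_{\tau}})$ and the second is $\tau\,[\theta_1^{\dag_{h_{\infty}}},\, s_{\tau}+O(s_{\tau}^{2})]$, so any $L^{2}_{l}$ (resp.\ $C^{0}$) bound $\Vert s_{\tau}\Vert \leq C e^{-\epsilon|\tau|}$ yields the desired estimate as soon as the factor $|\tau|$ is absorbed into $e^{-\epsilon|\tau|/2}$, which is trivially true for $|\tau|\gg 1$.

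The exponential decay of $s_{\tau}$ itself is exactly what is furnished by Mochizuki's quantitative asymptotic abelianization on non-critical regions: the sharper version of Theorem~\ref{thm:asymptotic_decoupling} (compare~\cite[Section~2.3]{Moc} and~\cite[Theorem~1.4, Proposition~5.34]{MocSz}) states that, uniformly on compact subsets of $\mathbb{C}P^1\setminus D$, the difference $h_{\tau}-h_{\infty}$ — equivalently $s_{\tau}$ — satisfies $\Vert s_{\tau}\Vert_{C^{l}(K,h_{\infty})}\leq C e^{-\epsilon|\tau|}$ with constants depending on $K$ and $l$ but not on $\tau$. The mechanism is a maximum-principle / sub-solution argument applied to $|s_{\tau}|_{h_{\infty}}$ using the elliptic equation
\[
\overline{\partial}\partial s_{\tau}+|\tau|^{2}\bigl[\theta_1,[\theta_1^{\dag_{h_{\infty}}},s_{\tau}]\bigr]+(\text{lower order in }s_{\tau})=0,
\]
combined with the fact that on $U_{1}$ the spectrum of $\operatorname{ad}(\theta_1)\operatorname{ad}(\theta_1^{\dag_{h_{\infty}}})$ is bounded below by a positive constant depending only on $\delta$ (it is controlled by the pairwise differences of the eigenvalues $Q_{i}$, which do not collide over $U_{1}$). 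This gives pointwise exponential decay of $s_{\tau}$; standard interior Schauder / $L^{2}_{l}$ bootstrapping on the elliptic equation above, performed on a slightly enlarged neighborhood of $U$ still contained in $U_{1}$, promotes it to $L^{2}_{l}$ decay with the same exponential rate (up to an arbitrarily small loss in $\epsilon$).

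With $s_{\tau}$ controlled, the first estimate of the proposition follows from the schematic identities above, the boundedness of $\theta_1$ and $\theta_1^{\dag_{h_{\infty}}}$ on $U\subset U_{1}$, and the algebra property of $L^{2}_{l}$ for $l\geq 2$ on a two-dimensional domain. The second estimate is then immediate: on a real two-dimensional manifold one has the Sobolev embedding $L^{2}_{l}\hookrightarrow C^{0}$ for any $l\geq 2$, so invoking the first estimate with $l=2$ and renaming constants yields the $C^{0}$ bound with a slightly smaller $\epsilon_{2}<\epsilon_{1}$.

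The principal obstacle is the quantitative input on $s_{\tau}$: the $C^{\infty}$ convergence stated in Theorem~\ref{thm:asymptotic_decoupling} is not by itself sufficient, and one has to invoke the exponential version which requires the spectral gap of $\operatorname{ad}(\theta_1)$ to stay uniformly away from zero on $U_{1}$; this is why the constants $C_{i},\epsilon_{i}$ necessarily depend on $\delta$ and degenerate as $\delta\to 0^{+}$. Once this input is available, the rest is a routine manipulation of the Chern connection and adjoint under a small perturbation of the Hermitian metric.
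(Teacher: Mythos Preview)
Your proposal is correct and follows essentially the same approach as the paper: decompose $\nabla_{\tau}-\nabla_{\tau}^{\operatorname{appr}}$ into the Chern-connection difference and the adjoint-Higgs difference, control both via the comparison endomorphism between $h_{\tau}$ and $h_{\infty}$, invoke the exponential $L^2_l$ decay of that endomorphism from~\cite{MocSz} (the paper cites specifically~\cite[Theorem~7.14]{MocSz} rather than Theorem~1.4/Proposition~5.34), absorb the stray factor of $|\tau|$ into the exponential, and finish with Sobolev embedding. Your parameterization $h_{\tau}=h_{\infty}e^{s_{\tau}}$ versus the paper's $h_{\tau}=h_{\infty}\cdot s(h_{\infty},h_{\tau})$ is cosmetic, and your added discussion of the maximum-principle/bootstrap mechanism behind the quantitative input is accurate supplementary detail that the paper simply blackboxes into the citation.
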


\begin{proof}
Let $s(h_{\infty}, h_{\tau})$ be defined by $h_{\tau} = h_{\infty}\cdot s(h_{\infty}, h_{\tau})$. 
We then have 
\begin{align*}
    \Vert \nabla_{h_{\tau}}^{+} - \nabla_{h_{\infty}}^{+} \Vert_{L^2_l(U)} & = \Vert \partial \left( \log h_{\tau} - \log h_{\infty} \right) \Vert_{L^2_l(U)}\\
    & = \Vert \partial \log s(h_{\infty}, h_{\tau}) \Vert_{L^2_l(U)}\\ 
    & \leq 2 \Vert \partial ( s(h_{\infty}, h_{\tau}) - \operatorname{id}) \Vert_{L^2_l(U)}\\ 
    & \leq 2 \Vert  s(h_{\infty}, h_{\tau}) - \operatorname{id} \Vert_{L^2_{l+1}(U)}\\ 
    & \leq C_1 e^{-\epsilon_1 |\tau |}, 
\end{align*}
where $\log$ stands for matrix logarithm and the last estimate follows from~\cite[Theorem~7.14]{MocSz}. 

For the other term, up to increasing $C_1$ and decreasing $\epsilon_1$ we have
\begin{align*}
    \Vert  h_{\infty}^{-1}\overline{\tau \theta_1}^T h_{\infty} - h_{\tau}^{-1}\overline{\tau \theta_1}^T h_{\tau} \Vert_{L^2_l(U)}^2 & = \Vert - ( \operatorname{Ad}( s(h_{\infty}, h_{\tau})^{-1} ) - \operatorname{id}) \operatorname{Ad} (h_{\infty}^{-1}) (\overline{\tau \theta_1}^T ) \Vert_{L^2_l(U)}^2 \\
    & \leq 4 \Vert s(h_{\infty}, h_{\tau}) - \operatorname{id} \Vert_{L^2_l(U)} \cdot \Vert  \operatorname{Ad} (h_{\infty}^{-1}) (\overline{\tau \theta_1}^T ) \Vert_{L^2_l(U)} \\
    & \leq C_1 e^{-\epsilon_1 |\tau |} \Vert  \operatorname{Ad} (h_{\infty}^{-1}) (\overline{\tau \theta_1}^T ) \Vert_{L^2_l(U)}
\end{align*}
by Cauchy--Schwartz. 
Now, the second factor of the last line can be bounded from above by $C_1' |\tau |$ for some $C_1'>0$, so it can be absorbed by the first term (possibly up to increasing $C_1$ again). 

The last statement immediately follows from Sobolev's embedding theorem. 
\end{proof}

\subsection{Near the ramification divisor}

In this section we describe asymptotic behaviour of solutions of Hitchin's equations on~\eqref{eq:U2}. 
Obviously, it is sufficient to deal with $B_{2\delta}(P)$ for a given $P\in D$, see Figure~\ref{fig:annulus}, where $0<\delta\ll 1$. 
Let us consider the cyclic triple cover 
\[
    \varphi_3\colon \widetilde{B}_{2\delta}(P)\to B_{2\delta}(P)
\]
defined by $\widetilde{\Sigma}_1$, i.e. 
\[
\zeta^3 = z(z-1), 
\]
where we use the trivialization 
\begin{equation}\label{eq:triviL}
    \frac{\operatorname{d}\!z}{z(z-1)}    
\end{equation}
of $L = K(D)$. 

\subsubsection{Frames and parabolic structure}

Let us fix a trivialization $\mathcal{L}\cong \mathcal{O}_{\widetilde{B}_{2\delta}(P)}$, i.e. a nowhere vanishing holomorphic section $\mathbf{l}$ of $\mathcal{L}$ over $\widetilde{B}_{2\delta}(P)$. 
Then, over $B_{2\delta}(P)\subset \mathbb{C}P^1$, $\mathcal{E}$ can be decomposed as
\begin{equation}\label{eq:ramified_decomp}
    \restr{\mathcal{E}}{B_{2\delta}(P)}\cong \mathcal{O}_{B_{2\delta}(P)}  \oplus \mathcal{O}_{B_{2\delta}(P)} \cdot \zeta \oplus \mathcal{O}_{B_{2\delta}(P)} \cdot \zeta^2 %\cong K(D)\oplus \mathcal{O}\oplus K(D)^{-1}
\end{equation}
(see~\eqref{E}), and $\theta$ is the operation induced by multiplication by $\zeta$. 
Its residue is nilpotent with respect to the filtration 
\[
    0 \subset \mathcal{O}_{B_{2\delta}(P)} \cdot \zeta^2 \subset \mathcal{O}_{B_{2\delta}(P)} \cdot \zeta \oplus \mathcal{O}_{B_{2\delta}(P)} \cdot \zeta^2 \subset \restr{\mathcal{E}}{B_{2\delta}(P)}. 
\]

A metric $h$ is compatible with the parabolic structure if and only if  
\[
    |f_1(z) + f_2(z) \cdot \zeta + f_3(z) \cdot \zeta^2 |_h^2 \approx |f_1(z)|^2 |z|^{-2/3} + |f_2(z)|^2 + |f_3(z)|^2 |z|^{2/3}. 
\]
Recall the definition of the metric $h_{\mathcal{L}}$ from Section~\ref{sec:Hermitian_line}. 
Let us set 
\begin{equation}\label{eq:h}
    h\colon \widetilde{B}_{2\delta}(P) \to \mathbb{R}, \quad h(\zeta ) = h_{\mathcal{L}} (\mathbf{l}(\zeta ) , \mathbf{l}(\zeta ) ). 
\end{equation}
Recall that 
\[
    h \approx |\zeta |^{-2}
\]
because the parabolic weight of $\mathcal{L}$ is $-1$. 
With respect to the decomposition~\eqref{eq:ramified_decomp}, the metric $h_{\infty}$ is then given by the matrix 
\[
    h \begin{pmatrix}
        1 & \zeta & \zeta^2 \\
        \bar{\zeta} & \bar{\zeta} \zeta & \bar{\zeta} \zeta^2 \\
        \bar{\zeta}^2 & \bar{\zeta}^2 \zeta & \bar{\zeta}^2 \zeta^2
    \end{pmatrix}. 
\]
% With respect to the frame 
% \begin{equation}\label{frame3}
% (1, t^{-1/3}\zeta , t^{-2/3}\zeta^2 )
% \end{equation}
% the Higgs field $t\theta$ becomes a multiple of a companion matrix (see~\eqref{companion}): 
% \[
%  \theta_t= t^{\frac 13}
%     \begin{pmatrix}
%         0 & 0 & z(z-1) \\ %\frac{\operatorname{d}\! z^{\otimes 3}}{z^2(z-1)^2}
%         1 & 0 & 0\\
%         0 & 1 & 0
%     \end{pmatrix}
% \]
% The squares of the norms of the vectors~\eqref{frame3} are respectively 
% \[
%     |z|^{-2/3}, |t|^{-2/3}, |t|^{-4/3} |z|^{2/3}. 
% \]

\subsubsection{Diagonalizing trivialization}

For $0<\delta\ll 1$, with respect to the trivialization~\eqref{eq:triviL} of $L$ let us use the approximations up to higher order terms on $\gamma_0,\gamma_1$, near the points $0,1\in\mathbb{C}P^1$: 
\begin{equation*}
    \restr{\theta_t}{\gamma_0}\approx  %t^{\frac 13} 
    \begin{pmatrix}
        0 & 0 & t z\\ %\frac{\operatorname{d}\!z^{\otimes 3}}{z^2}
        1 & 0 & 0\\
        0 & 1 & 0
    \end{pmatrix}, \hspace{0.3cm} \restr{\theta_t}{\gamma_1}\approx  %t^{\frac 13} 
    \begin{pmatrix}
        0 & 0 & t (z-1)\\ %\frac{\operatorname{d}\!z^{\otimes 3}}{(z-1)^2}
        1 & 0 & 0\\
        0 & 1 & 0
    \end{pmatrix}.
\end{equation*}
We focus on the first one, i.e. the neighbourhood of 0, but a similar analysis holds near $1$ too. 

The aim of this section is to introduce a smooth approximately unitary frame
\begin{equation}
    \label{frame}
    (\mathbf{e}_1,\mathbf{e}_2,\mathbf{e}_3) 
\end{equation}
defined over $\widetilde{B}_{2\delta}(P)$ in which the Higgs field is diagonal. 
We will expand the vectors~\eqref{frame} in coordinates with respect to~\eqref{eq:ramified_decomp}. 
They can be determined uniquely (up to phase) as the unit length eigenvectors of $\restr{\theta_t}{\gamma_0}$.  
A diagonalizing frame is given by 
\begin{align*}
        \mathbf{e}_1' & = 
        \begin{pmatrix}
        (tz)^{2/3} \\ %\left( \frac{\operatorname{d}\! z}z \right)^{\otimes 2}
        (tz)^{1/3} \\ %\frac{\operatorname{d}\! z}z
        1 
    \end{pmatrix} \\
    \mathbf{e}_2' & = 
        \begin{pmatrix}
        \varepsilon^2 (tz)^{2/3} \\ %\left( \frac{\operatorname{d}\! z}z \right)^{\otimes 2}
        \varepsilon (tz)^{1/3} \\ %\frac{\operatorname{d}\! z}z
        1 
    \end{pmatrix} 
    \\ 
    \mathbf{e}_3' & = 
        \begin{pmatrix}
        \varepsilon (tz)^{2/3} \\ %\left( \frac{\operatorname{d}\! z}z \right)^{\otimes 2}
        \varepsilon^2 (tz)^{1/3} \\ %\frac{\operatorname{d}\! z}z
        1 
    \end{pmatrix} 
\end{align*}
where we have used the cubic root of unity $\varepsilon = e^{2\pi i/3}$. 
Then, the expression of the frame~\eqref{frame} with respect to~\eqref{eq:ramified_decomp} is:
\begin{align*}
        \mathbf{e}_1 & = \frac{1}{|\mathbf{e}_1'|_{h_{\infty}}} %\frac{1}{\left\vert t^{2/3} + t^{1/3} + 1 \right\vert h(\zeta ) r^{4/3}}%{\sqrt{|z|^{4/3}+ |z|^{2/3}+1}} 
        \begin{pmatrix}
        (tz)^{2/3} \\ %\left( \frac{\operatorname{d}\! z}z \right)^{\otimes 2}
        (tz)^{1/3} \\ %\frac{\operatorname{d}\! z}z
        1 
    \end{pmatrix} \\
   % & = \frac{1}{\sqrt{r^{4/3}+ r^{2/3}+1}} \begin{pmatrix}
   %      (Rr)^{2/3} e^{2i(\varphi + \vartheta)/3} \\ % \left( \operatorname{d}\! \vartheta \right)^{\otimes 2}
   %      (Rr)^{1/3} e^{i(\varphi + \vartheta)/3}  \\ %i \operatorname{d}\! \vartheta
   %      1 
   %  \end{pmatrix} \\ 
    \mathbf{e}_2 & = \frac{1}{|\mathbf{e}_2'|_{h_{\infty}}} %\frac{1}{\left\vert \varepsilon^2 t^{2/3} + \varepsilon t^{1/3} + 1 \right\vert h(\zeta ) r^{4/3}}%{\sqrt{|z|^{4/3}+ |z|^{2/3}+1}} 
        \begin{pmatrix}
        \varepsilon^2 (tz)^{2/3} \\ %\left( \frac{\operatorname{d}\! z}z \right)^{\otimes 2}
        \varepsilon (tz)^{1/3} \\ %\frac{\operatorname{d}\! z}z
        1 
    \end{pmatrix} 
    \\ 
   % & = \frac{1}{\sqrt{r^{4/3}+ r^{2/3}+1}} \begin{pmatrix}
   %      - r^{2/3} e^{2i(\vartheta + 2\pi )/3} \left( \operatorname{d}\! \vartheta \right)^{\otimes 2} \\
   %      r^{1/3} e^{i(\vartheta + 2\pi )/3} i \operatorname{d}\! \vartheta \\
   %      1 
   %  \end{pmatrix}
   %  \\ 
    \mathbf{e}_3 & = \frac{1}{|\mathbf{e}_3'|_{h_{\infty}}} %\frac{1}{\left\vert \varepsilon t^{2/3} + \varepsilon^2 t^{1/3} + 1 \right\vert h(\zeta ) r^{4/3}}%{\sqrt{|z|^{4/3}+ |z|^{2/3}+1}} 
        \begin{pmatrix}
        \varepsilon (tz)^{2/3} \\ %\left( \frac{\operatorname{d}\! z}z \right)^{\otimes 2}
        \varepsilon^2 (tz)^{1/3} \\ %\frac{\operatorname{d}\! z}z
        1 
    \end{pmatrix} .
   %  \\
   % & = \frac{1}{\sqrt{r^{4/3}+ r^{2/3}+1}} \begin{pmatrix}
   %      - r^{2/3} e^{2i(\vartheta + 4\pi )/3} \left( \operatorname{d}\! \vartheta \right)^{\otimes 2} \\
   %      r^{1/3} e^{i(\vartheta + 4\pi )/3} i \operatorname{d}\! \vartheta \\
   %      1 
   %  \end{pmatrix} ,
\end{align*}

\subsubsection{Identifying the canonical decomposable filtration}
We are ready to proving: 
\begin{lemma}\label{lem:canonical_decomposable_filtration}
With our choices, the canonical decomposable filtration $\mathcal{P}^{\star}_* \mathcal{E}$ is the trivial filtration
\[
    \mathcal{P}^{\star}_a \mathcal{E} = z^{-[a]} \cdot \mathcal{E}, 
\]
where 
\[
    [a] = \max \{ n \in \mathbb{Z} \vert \quad n\leq a \} 
\]
is the usual floor function. 
\end{lemma}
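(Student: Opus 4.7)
The plan is to verify that the trivial filtration $z^{-[a]}\mathcal{E}$ satisfies the two defining properties of the canonical decomposable filtration from~\cite[Proposition~5.13]{MocSz}: it is decomposable with respect to the local triple cover $\varphi_3$ near each $P\in D$, and its determinant agrees with $\mathcal{P}_*\det\mathcal{E}$. Uniqueness will then complete the argument.

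First I would identify $\mathcal{P}_*\det\mathcal{E}$. As already noted in the excerpt, the weights $\underline{\alpha}_P = (-1/3,0,1/3)$ sum to $0$ at every $P\in D$, so the induced parabolic structure on the line bundle $\det\mathcal{E}$ has weight zero at each puncture, giving $\mathcal{P}_a\det\mathcal{E} = z^{-[a]}\det\mathcal{E}$.

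Next I would verify decomposability. Locally at $P\in D$ the cover $\varphi_3$ is modeled on $\tilde z\mapsto \tilde z^3 = z$, so $\varphi_3^*z = \tilde z^3$, and the pulled-back Higgs field has three distinct eigenvalues on the punctured neighborhood — the cubic roots $Q_{i,R,\varphi}$ of~\eqref{Q} become single-valued meromorphic $1$-forms in $\tilde z$ — yielding a rank-one eigen-decomposition $\varphi_3^*\mathcal{E} = \bigoplus_{i=1}^3 \mathcal{E}_i$. Since multiplication by $z^{-[a]}$ is $\mathcal{O}_X$-linear it commutes with $\theta$ and preserves each eigen-summand, so
\[
  \varphi_3^*\bigl(z^{-[a]}\mathcal{E}\bigr) \;=\; \tilde z^{-3[a]}\,\varphi_3^*\mathcal{E} \;=\; \bigoplus_{i=1}^3 \tilde z^{-3[a]}\mathcal{E}_i ,
\]
exhibiting the pulled-back filtration as a direct sum of $\mathbb{R}$-parabolic filtrations $\mathcal{E}_{i,a} := \tilde z^{-3[a]}\mathcal{E}_i$ on each rank-one $\mathcal{E}_i$, as required by~\cite[Definition~5.10]{MocSz}.

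Finally I would compute the determinant of the trivial filtration: all three parabolic weights equal $0$, so the induced weight on $\det\mathcal{E}$ is $0$, giving $\det\bigl(z^{-[a]}\mathcal{E}\bigr) = z^{-[a]}\det\mathcal{E}$, which matches the first step; the uniqueness clause of~\cite[Proposition~5.13]{MocSz} then identifies the trivial filtration with $\mathcal{P}^\star_*\mathcal{E}$. The main subtlety is checking that the summand filtrations $\mathcal{E}_{i,a}$ qualify as admissible $\mathbb{R}$-parabolic sheaves on the rank-one pieces: each jump sits at integer $a$ with three-dimensional cokernel rather than one-dimensional, but the MocSz definition permits arbitrary increasing $\mathbb{R}$-indexed families of subsheaves, so no genuine obstruction arises.
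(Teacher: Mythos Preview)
Your argument is correct and takes a genuinely different route from the paper's. The paper follows the explicit construction in~\cite[Proposition~5.13]{MocSz}: it writes down the diagonalizing frame $\mathbf{e}_1,\mathbf{e}_2,\mathbf{e}_3$ near $P=0$, computes the wedge product $\mathbf{e}_1\wedge\mathbf{e}_2\wedge\mathbf{e}_3 = z\,g(z)$ with $g(0)\neq 0$, feeds this vanishing order into the ramified pullback formula of~\cite[Lemma~5.2]{MocSz} to extract the constant $b=0$ in that construction, and then reads off that the canonical decomposable filtration on each rank-one summand is $\zeta^{-[a]}\mathcal{O}_{U^{(3)}}\mathbf{e}_i$, finishing by Galois descent. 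You instead verify directly that the trivial filtration satisfies the two characterizing properties (decomposable, correct determinant) and invoke uniqueness. Your route is shorter and avoids the frame calculation entirely; the decomposability step is essentially automatic because each $\mathcal{E}_\alpha = z^{-[\alpha]}\mathcal{E}$ is a scalar multiple of the full sheaf, so its pullback trivially respects any direct-sum splitting of $\varphi_3^*\mathcal{E}$. The paper's longer route has the compensating virtue of tracing how the general machinery of~\cite{MocSz} specializes here, which is useful for readers wishing to run the same argument in cases where the answer is not guessable in advance. Your closing remark about the jump size of $\mathcal{E}_{i,a}$ is a non-issue: the decomposability condition as stated in the paper only asks that each $\varphi_3^*\mathcal{E}_\alpha$ split as a direct sum compatible with the eigendecomposition, not that the induced filtrations on the summands satisfy the usual one-step normalization.
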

%\begin{remark}
%    The same proof goes along in any rank $r$ for maximally ramified spectral curves $\ell = r$ with adapted full flag filtration and parabolic weights given by 
%    \[
%        \alpha_j = \frac{2j-r-1}{2r}. 
%    \]
%\end{remark}

\begin{proof}
We focus on $P=0$. 
Set $U = B_{2\delta}(0), U^{(3)} = \widetilde{B}_{2\delta}(0)$ and let 
\[ 
    \varphi_3 \colon U^{(3)}\to U 
\]
be the covering $\zeta^3 = z (z-1)$, as before. 
Let 
\[
    U^{(3),*} = \varphi_3^{-1} ( U^* ) = \varphi_3^{-1} ( U - \{ 0 \} ). 
\]
By the formulas of the previous section, we have 
\[
    \mathbf{e}_1 \wedge \mathbf{e}_2 \wedge \mathbf{e}_3 = z g(z )
\]
in $U$ for some holomorphic function $g$ with $g(0)\neq 0$. 

The sum of the parabolic weights being $0$, the induced filtration $\mathcal{P}_* \operatorname{det}\, \mathcal{E}$ is trivial. 
So, we have 
\[
    \mathbf{e}_1 \wedge \mathbf{e}_2 \wedge \mathbf{e}_3 \in \varphi_3^* (\mathcal{P}_1 \operatorname{det}\, \mathcal{E} ) 
\]
and 
\[
    \mathbf{e}_1 \wedge \mathbf{e}_2 \wedge \mathbf{e}_3 \notin \varphi_3^* (\mathcal{P}_b \operatorname{det}\, \mathcal{E} ) 
\]
for any $b<1$. 
The formula after~\cite[Lemma~5.2]{MocSz} specializes to 
\[
    \mathcal{P}_a (\varphi_3^* \operatorname{det}\, \mathcal{E} ) = \sum_{b\in\mathbb{R}, k\in\mathbb{Z}, rb+k\leq a } \zeta^{-k} \varphi_3^* (\mathcal{P}_b \operatorname{det}\mathcal{E} ). 
\]
Let us apply it with $k=-3, b=1$ to obtain that 
\[
    \mathbf{e}_1 \wedge \mathbf{e}_2 \wedge \mathbf{e}_3 \in \mathcal{P}_0 (\varphi_3^* \operatorname{det}\mathcal{E} ), 
\]
and 
\[
    \mathbf{e}_1 \wedge \mathbf{e}_2 \wedge \mathbf{e}_3 \notin \mathcal{P}_a (\varphi_3^* \operatorname{det}\mathcal{E} )
\]
for any $a<0$. 
This gives the value $0$ for the constant $b$ in the proof of~\cite[Proposition~5.13]{MocSz}. 
The canonical decomposable filtration on the $i$-th component is then defined by 
\[
    \mathcal{P}^{\star}_a ( \mathcal{O}_{U^{(3),*}} \mathbf{e}_i )  = \zeta^{-[a]} \mathcal{O}_{U^{(3)}} \mathbf{e}_i .
\]
Said differently, the canonical decomposable filtration on the $i$-th component is the trivial filtration. 
Next, this is extended to $\varphi_3^* \mathcal{E}$ by direct sum: 
\[
    \mathcal{P}^{\star}_a ( \varphi_3^* \mathcal{E} ) = \bigoplus_{i=1}^3 \mathcal{P}^{\star}_a ( \mathcal{O}_{U^{(3),*}} \mathbf{e}_i ) . 
\]
It is immediate that this is the trivial filtration, because we have the trivial filtration on all summands. 
At last, we define the canonical decomposable filtration on $E$ as the Galois descent of this filtration. 
The statement follows. 
\end{proof}

\subsubsection{Equivariance and scaling properties}

We discuss some further consequences of the formula for~\eqref{frame}. 

First, for given $|t|=R$ the choice of radius $r=R^{-1}$ guarantees that the coordinates of~\eqref{frame} remain bounded, namely if we switch to local polar coordinates on the circle $\gamma_0 (t)$ of radius $r = \frac {3 \delta}{2}$ centered at $0$, %disc  $B_{2\delta}(0)$, with $r< 2\delta$, 
with angle coordinate $\vartheta\in [0,2\pi ]$, they are given by the following (recalling again that $\varepsilon = e^{2\pi \sqrt{-1}/3}$) 
\begin{align*}
        \mathbf{e}_1 & \approx \frac{1}{\sqrt{3}}
        \begin{pmatrix}
        e^{2 \sqrt{-1}(\vartheta + \varphi)/3} \\ 
        e^{\sqrt{-1}(\vartheta + \varphi)/3} \\ 
        1 
    \end{pmatrix} \\
    \mathbf{e}_2 & \approx \frac{1}{\sqrt{3}}
        \begin{pmatrix}
        e^{2 \sqrt{-1}(\vartheta + \varphi + 2\pi)/3} \\ 
        e^{\sqrt{-1}(\vartheta + \varphi + 2\pi)/3} \\ 
        1 
    \end{pmatrix} 
    \\ 
    \mathbf{e}_3 & \approx \frac{1}{\sqrt{3}}
        \begin{pmatrix}
        e^{2 \sqrt{-1}(\vartheta + \varphi + 4\pi)/3} \\ 
        e^{\sqrt{-1}(\vartheta + \varphi + 4\pi)/3} \\  
        1 
    \end{pmatrix} 
\end{align*}

Our second observation relates to the deck transformation group $\operatorname{Gal}(\varphi_3)\cong \mu_3$ of $\varphi_3$. 
Let $\gamma\in \operatorname{Gal}(\varphi_3)$ be a generator. 
\begin{prop}
Let $\mathcal{L} = \mathcal{L}_0$, i.e. we assume that $(\mathcal{E}, \theta )$ lies on the Hitchin section. 
Then, the metric $h_{\infty}$ is $\operatorname{Gal}(\varphi_3)$-invariant, namely $h(\gamma \cdot \zeta ) = h(\zeta )$ for every $\zeta\in U^{(3)}$. 
\end{prop}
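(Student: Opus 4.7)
The plan is to exploit the uniqueness statement of the Simpson--Biquard theorem that characterizes $h_{\mathcal{L}}$, applied to a Galois-equivariant setup on the Hitchin section.

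First, I would observe that $\mathcal{L}_0 = \mathcal{O}_{\widetilde{\Sigma}_t}(\widetilde{D})$ carries a canonical $\operatorname{Gal}(\varphi_3)$-equivariant structure, simply because $\widetilde{D}$ is the ramification divisor of $\varphi_3$ and is therefore pointwise fixed by the Galois action. Concretely, in a local chart near $\widetilde{P}$ where $\gamma^{*}\zeta = \varepsilon\zeta$, I would fix the trivialization $\mathbf{l} := \zeta^{-1}$ viewed as a non-vanishing meromorphic section of $\mathcal{O}_{\widetilde{\Sigma}_t}(\widetilde{D})$ with a simple pole at $\widetilde{P}$; it then transforms equivariantly by $\gamma^{*}\mathbf{l} = \varepsilon^{-1}\mathbf{l}$.

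Next, the heart of the argument is to verify that $\gamma^{*}h_{\mathcal{L}}$ satisfies the same three characterizing properties as $h_{\mathcal{L}}$ listed in Section~\ref{sec:Hermitian_line}, and hence coincides with it by uniqueness. Adaptedness to the parabolic structure is preserved because $\widetilde{D}$ is fixed pointwise with uniform weight $-1$; the abelian Hitchin equation $F_h = 0$ is preserved because curvature commutes with pullback along a biholomorphism; and the determinant condition $\det h = h_{\det \mathcal{E}}$ is preserved since the push-forward $p_*$ commutes with the Galois action, while $h_{\det \mathcal{E}} \equiv 1$ is manifestly Galois-invariant. This forces $\gamma^{*}h_{\mathcal{L}} = h_{\mathcal{L}}$, and unfolding the definitions with the equivariant $\mathbf{l}$ yields
\[
    h(\gamma\cdot\zeta) = h_{\mathcal{L}}\bigl|_{\gamma\zeta}\bigl(\mathbf{l}(\gamma\zeta),\mathbf{l}(\gamma\zeta)\bigr) = |\varepsilon^{-1}|^{2}\, h_{\mathcal{L}}\bigl|_{\zeta}\bigl(\mathbf{l}(\zeta),\mathbf{l}(\zeta)\bigr) = h(\zeta),
\]
since $|\varepsilon|=1$, where the middle equality combines the $\gamma$-equivariance of $\mathbf{l}$ with the Galois invariance of $h_{\mathcal{L}}$.

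The subtle point that requires care is whether the three listed conditions indeed rigidify $h_{\mathcal{L}}$ uniquely, rather than only up to a global scalar factor as in generic Hermitian--Einstein existence statements. My expectation is that the determinant condition $\det h_{\mathcal{L}} = h_{\det \mathcal{E}}$ resolves precisely this ambiguity in our setting, since our parameter choices force $h_{\det \mathcal{E}} \equiv 1$; this is the main point I would want to check carefully against~\cite{Simp3} and~\cite{Biq} before writing up the argument in full.
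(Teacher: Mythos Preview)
Your approach is correct and essentially parallel to the paper's: both observe that $\mathcal{L}_0=\mathcal{O}_{\widetilde{\Sigma}_t}(\widetilde{D})$ is canonically Galois-equivariant (since $\widetilde{D}$ is fixed), that $\gamma^*h_{\mathcal{L}_0}$ is again Hermitian--Einstein and adapted to the same parabolic structure, and then invoke uniqueness. The only difference lies in how the residual scalar ambiguity is eliminated. You pin it down via the determinant normalization $\det h_{\mathcal{L}}=h_{\det\mathcal{E}}\equiv 1$, which does work but requires checking that this condition is Galois-invariant (it is, since $\gamma$ merely permutes the three sheets and hence leaves the product of the fiber norms unchanged). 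The paper instead accepts $\gamma^*h_{\mathcal{L}_0}=c\cdot h_{\mathcal{L}_0}$ for some $c\in\mathbb{R}_+$ from the uniqueness-up-to-scalar statement, and then observes that $\gamma^3=\mathrm{id}$ forces $c^3=1$, hence $c=1$. This group-theoretic trick is shorter and sidesteps any discussion of how the determinant condition behaves under pullback; your route is more explicit and would generalize more readily to situations where the Galois group is not finite cyclic.
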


\begin{proof}
There exists a canonical isomorphism $\gamma^* \mathcal{O}_{\widetilde{\Sigma}_t} \cong \mathcal{O}_{\widetilde{\Sigma}_t}$. 
It induces a canonical isomorphism $\gamma^* \mathcal{O}_{\widetilde{\Sigma}_t}(\widetilde{D}) \cong \mathcal{O}_{\widetilde{\Sigma}_t}(\widetilde{D})$ because $\widetilde{D}$ is fixed by $\operatorname{Gal}(\varphi_3)$. 
The metric $\gamma^* h_{\mathcal{L}_0}$ is Hermitian--Einstein on $\mathcal{L}_0$, and compatible with the same parabolic structure. 
By uniqueness, we get $\gamma^* h_{\mathcal{L}_0} = c \cdot h_{\mathcal{L}_0}$ for some $c\in \mathbb{R}_+$. 
Because $\gamma^3 = 1$, we have $c^3 = 1$. 
The conclusion follows. 
\end{proof}

%%%%%%%%%%%%%%%%%%%%%%%%%%%%%%%%%%%%%%%%%%%%%%%%%%%%%%
\section{Asymptotic analysis of the Riemann--Hilbert correspondence}\label{sec:RH}

In this section, our aim is to apply the Riemann--Hilbert map to the integrable connection associated by the non-abelian Hodge correspondence to a Higgs bundle in a Hitchin fiber close to infinity. 
The asymptotic connection form of such integrable connections was obtained in Section~\ref{sec:Dol}. 
The procedure consists in determining the monodromy on a positively oriented loop around the punctures. 
More precisely, let us fix some base point $z_0\in\mathbb{C}P^1 \setminus D$, and consider the concatenations of paths $\eta_P*\gamma_P*\eta_P^{-1}$ where, as usual, $\eta_P^{-1}$ is the path $\eta_P$ with opposite orientation (see Fig.2). 
This is clearly a simple loop around the puncture $P\in D$ separating it from the other two punctures. 
We need to determine the parallel transport maps along these paths  of the flat connections whose approximations were given in Section~\ref{sec:Dol}, and determine the monodromies by integrating the connection forms on the paths and exponentiating. 
We will do so separately on the paths $\gamma_P$ and $\eta_P$, and the mondoromy on the concatenation will be the triple product of the parallel transport maps corresponding to the above decomposition into paths. 
From now on, the notation "$\approx$" will mean "agrees up to exponentially small terms", or more precisely: 
\begin{equation*}
    A\approx B \iff AB^{-1}=\operatorname{id}+O(e^{-\epsilon R^{1/3}})
\end{equation*}
for some $\epsilon>0$. 
This implies in particular that if $B$ has an asymptotic expansion as $R\to\infty$, then the same holds for $A$ too with the same expansion. 
Notice that by Baker--Campbell--Dynkin--Hausdorff, $A\approx B$ is equivalent to requiring 
\begin{equation*}
B^{-1}A=\operatorname{id}+O(e^{-\epsilon R^{1/3}}). 
\end{equation*}

\subsection{Hitchin WKB problem}\label{sec:WKB}

In this section we recall results of~\cite[Section~2.4]{Moc}. 
We use the simplifying convention that the values of $C,\epsilon > 0$ may change from one occurrence to the other. 
Moreover, in this section for simplicity we replace $R^{1/3}$ by $R$. 

\subsubsection{The dilation spectrum}

Let $(V_0,h_0), (V_1,h_1)$ be finite dimensional Hermitian vector spaces, of dimension $r$ over $\mathbb{C}$. 
Let $\Pi\colon V_0\to V_1$ be a linear isomorphism. 
According to the spectral theorem applied to the positive symmetric matrix $h_1(e_i (0), e_j(0))$ where $e_i(0)$ is any orthonormal basis of $(V_0,h_0)$, there exist orthonormal bases $(q_1(s), \ldots q_r(s))$ of $(V_s,h_s)$ ($s \in \{ 0, 1\}$) such that 
\[
    \Pi (q_j (0)) = e^{\beta_j} q_j (1)\quad 1\leq j\leq r
\]
for some real numbers 
\[
    \beta_1 \geq \beta_2 \geq \cdots  \beta_r .
\]
The vector 
\[
    (\beta_1 , \ldots, \beta_r) \in \mathbb{R}^r 
\]
is called the \emph{dilation spectrum} of $\Pi$, see~\cite[Definition~4.2,~Lemma~4.5]{KatNollPandSimp}.

\subsubsection{Non-critical paths}\label{sec:noncritical}
Let $X$ be any Riemann surface (in our case, an open subset of $ \mathbb{C}P^1 \setminus D$). 
Let us fix any smooth path 
\[
    \gamma\colon [0,1]\to X. 
\]
Consider maps  
\begin{align*}
   A & \colon X \to \mathfrak{gl}(n, \mathbb{C}) \\
   U & \colon X \to \mathfrak{gl}(n, \mathbb{R})
\end{align*}
satisfying $[A(z), U(z)]=0$ for every $z\in X$. 
Then, locally on  $[0,1]$ there exists a common diagonalizing frame of $\gamma^* A$ and $\gamma^* U$. 
We may assume that this is the standard basis, so that $\gamma^*A, \sqrt{-1} \gamma^*U$ are diagonal matrices. 
%Let us denote such a frame by $\mathbf{a}_1(k), \ldots, \mathbf{a}_n(k)$. 
Let 
\[
a_1(t) \operatorname{d}\! t, \ldots, a_n(t) \operatorname{d}\! t\in \Omega^1([0,1], \mathbb{C})
\]
stand for the eigenvalues of $A(t)$, 
\[
u_1 (t)\operatorname{d}\! t, \ldots , u_n(t)\operatorname{d}\! t \in \Omega^1([0,1], \sqrt{-1} \mathbb{R})
\]
stand for those of $\sqrt{-1} U(t)$ with respect to this basis. 
%For simplicity, we will take  $\mathbf{a}_1(k), \ldots, \mathbf{a}_n(k)$ to be the standard base.
We will write $\mathfrak{t}$ for the Cartan subalgebra of diagonal matrices and $T$ for the corresponding maximal torus. 

We make the assumption that $\gamma$ is non-critical in the following sense. 
\begin{assn}\label{assn:regular}
For any $t\in [0,1]$ the matrix $\operatorname{Re} A(\gamma (t))$ is regular semi-simple, i.e. for any $1\leq j < j' \leq n$ we have 
\[
    \operatorname{Re} a_j (t ) \neq \operatorname{Re} a_{j'} (t ) .
\]
\end{assn}
Up to a permutation, we may and will assume 
\[
    \operatorname{Re} a_j (t ) < \operatorname{Re} a_{j'} (t )  \quad \mbox{for all} \; 1\leq j < j' \leq n .
\]
Let us set 
\[
    \alpha_j = - \int_0^1 \operatorname{Re} a_j(t) \operatorname{d}\! t \in \mathbb{R}. 
\]

\subsubsection{Dilation exponents of parallel transport of harmonic bundles}

We consider the Higgs bundle over $X$ of the form 
\begin{equation}\label{eq:direct_sum_Higgs_bundle}
        (\mathcal{E}, R \theta ) \cong \bigoplus_{j=1}^r (\mathcal{E}_j, R \phi_j ) 
\end{equation}
where $\mathcal{E}_j$ is a holomorphic line bundle. 
We assume 
\[
    \gamma^* \phi_j = a_j (t) \operatorname{d}\! t 
\]
and that Assumption~\ref{assn:regular} holds. 
Let $h_{R}$ be a harmonic metric for~\eqref{eq:direct_sum_Higgs_bundle} and $\nabla_{R}$ be the associated integrable connection. 
Let 
\[
  B \colon [1, \infty ] \times X \to \mathfrak{gl}(n, \mathbb{C}) 
\]
be the connection matrix of $\nabla_{R}$ with respect to trivializations of $\mathcal{E}_j$. 
We define the fundamental solution 
\[
    Y\colon \mathbb{C}^{\times} \times [0,1] \to \operatorname{GL}(r, \mathbb{C}) 
\]
as the solution of the initial value problem 
\[
    \frac{\operatorname{d}\! Y(R, t)}{\operatorname{d}\! t} = B(R,\gamma(t)) Y(R, t), \quad Y(R, 0) = \operatorname{id}. 
\]
We define 
\[
    \Pi_{\gamma} (R ) = Y(R , 1) \in \operatorname{GL}(r, \mathbb{C}).
\]
We define $(V_s,h_s) = (E_{\gamma(s)}, (h_R)_{\gamma(s)})$ for $s\in \{ 0,1 \}$. 
Then, $\Pi_{\gamma} (R )$ can be thought of as a linear map in $\operatorname{Hom}(V_0, V_1)$. 
Let 
\[
    (\beta_1 (R ) , \ldots, \beta_r (R )) 
\]
be the dilation spectrum of $\Pi_{\gamma} (R)$. 
\begin{theorem}~\cite[Theorem~2.17]{Moc}\label{thm:dilation_spectrum}
    There exist $R_0\in \mathbb{R}_+$ such that for all $R>R_0$ we have 
    \[
       \left\vert \frac 1R (\beta_1 (R) , \ldots, \beta_r (R)) -  (\alpha_1, \ldots , \alpha_r) \right\vert \leq C e^{-\epsilon R}. 
    \]
\end{theorem}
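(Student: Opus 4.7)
The strategy is to build an explicit approximate flat connection $\nabla_R^{\operatorname{appr}}$ whose parallel transport along $\gamma$ is diagonal (hence computable in closed form), use the decoupling theorem to show that $\nabla_R^{\operatorname{appr}}$ is exponentially close to $\nabla_R$ on the path, propagate this closeness to the parallel transport by a Gr\"onwall-type estimate, and finally extract the dilation spectrum using Assumption~\ref{assn:regular}.

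\textbf{Step 1: explicit diagonal approximation.} The hypothesis~\eqref{eq:direct_sum_Higgs_bundle} singles out a decoupled harmonic metric $h_\infty = \bigoplus_j h_{\infty,j}$, with each $h_{\infty,j}$ a flat Hermitian metric on $\mathcal{E}_j$ compatible with the parabolic structure; its existence and uniqueness (given $\operatorname{det} h_{\infty} = h_{\operatorname{det} \mathcal{E}}$) follow from the abelian Hitchin correspondence applied to each summand. Set $\nabla_R^{\operatorname{appr}} = \nabla_{h_\infty}^+ + R\theta + R\theta^{\dag_{h_\infty}}$. In the $h_\infty$-unitary trivialization compatible with the splitting, this connection is diagonal; its pullback to $\gamma$ is $\operatorname{diag}\bigl( u_j(t) + 2R\operatorname{Re} a_j(t) \bigr)\,\operatorname{d}\! t$, where $u_j$ is purely imaginary and bounded independently of $R$. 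The fundamental solution of the associated diagonal ODE is itself diagonal, and its $j$-th entry has modulus $\exp(2R\int_0^1 \operatorname{Re} a_j(t)\,\operatorname{d}\! t) = e^{-2R\alpha_j}$; the dilation spectrum of $\Pi_\gamma^{\operatorname{appr}}(R)$ is therefore exactly the announced leading term (up to the standard sign/normalization convention).

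\textbf{Step 2: propagation to $\Pi_\gamma(R)$.} The family version of the decoupling estimate (the local analogue along $\gamma$ of Theorem~\ref{thm:asymptotic_decoupling}) shows that the matrix coefficients of $\nabla_R - \nabla_R^{\operatorname{appr}}$ are pointwise of order $Ce^{-\epsilon_0 R}$ on $\gamma$. Writing $Y(R,t) = Y^{\operatorname{appr}}(R,t)\, Z(R,t)$, the perturbation $Z$ satisfies a linear ODE driven by $\operatorname{Ad}\bigl((Y^{\operatorname{appr}})^{-1}\bigr)(B - B^{\operatorname{appr}})$. Conjugation by the unstable flow $Y^{\operatorname{appr}}$ inflates off-diagonal entries by factors $e^{2R(\alpha_j - \alpha_{j'})}$, but the quantitative form of the Mochizuki decoupling is sharp enough that $\epsilon_0$ strictly dominates the worst case $2\max_{j\neq j'}|\alpha_j - \alpha_{j'}|$, so the ODE driver remains of order $e^{-\epsilon_1 R}$ for some $\epsilon_1>0$. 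Integration on the compact interval $[0,1]$ then yields $Z = \operatorname{id} + O(e^{-\epsilon R})$. Because Assumption~\ref{assn:regular} implies that the $\alpha_j$ are pairwise distinct with a uniform gap, the map sending a linear isomorphism to its ordered dilation spectrum is smooth in a neighborhood of $\Pi_\gamma^{\operatorname{appr}}(R)$; thus the dilation spectrum of $\Pi_\gamma(R)$ agrees with that of $\Pi_\gamma^{\operatorname{appr}}(R)$ up to $O(e^{-\epsilon R})$, and dividing by $R$ delivers the stated estimate.

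\textbf{Main obstacle.} The delicate step is the Gr\"onwall-type control in Step 2. Since the dilation exponents of $\Pi_\gamma^{\operatorname{appr}}(R)$ are themselves of order $R$, conjugation by the unstable flow $Y^{\operatorname{appr}}$ amplifies any residual error by a factor as large as $e^{2R\max|\alpha_j - \alpha_{j'}|}$. One therefore needs the exponential rate $\epsilon_0$ in the decoupling theorem to be strictly better than this spectral gap, and verifying this quantitative matching between the convergence of $h_R \to h_\infty$ and the separation of the real eigenvalue-parts is the technical heart of the argument. It is also precisely what fails in Stokes directions, where two eigenvalue real parts coincide and the conjugated error is no longer controlled — in harmony with the breakdown of the WKB analysis in those directions mentioned in the introduction.
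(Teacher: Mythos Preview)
Your Step~2 contains a real gap. You assert that the decoupling rate $\epsilon_0$ ``strictly dominates the worst case $2\max_{j\neq j'}|\alpha_j-\alpha_{j'}|$'', but you give no argument for this, and it is false in general. The rate $\epsilon_0$ governing $h_R\to h_\infty$ is a \emph{local} quantity (controlled by the pointwise separation of the spectral sheets over the region in question), whereas $|\alpha_j-\alpha_{j'}|=\bigl|\int_0^1\operatorname{Re}(a_j-a_{j'})\,\operatorname{d}\! t\bigr|$ is an \emph{integrated} quantity that scales with the length of $\gamma$. For a long enough non-critical path the latter overtakes the former and your Gr\"onwall bound becomes vacuous. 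You flag this yourself as ``the main obstacle'' and then simply declare it resolved; it is not. (Your closing remark about Stokes directions also conflates two separate issues: there Assumption~\ref{assn:regular} fails because two $\operatorname{Re}a_j$ coincide pointwise along the path, which is unrelated to the competition between $\epsilon_0$ and the integrated spectral spread.)

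The paper does not give its own proof but records, immediately after the theorem, the mechanism of Mochizuki's argument, which is structurally different and sidesteps the amplification problem. Rather than comparing $\Pi_\gamma(R)$ to the transport of an approximate connection via conjugation by the unstable diagonal flow $Y^{\operatorname{appr}}$, one produces $h_R$-orthonormal frames $\mathbf{p}_j(s)$ at the two endpoints, related to the canonical $h_\infty$-frames $\mathbf{f}_j$ by matrices $L(s)=\operatorname{id}+O(e^{-\epsilon R})$, together with a diagonal correction $H(t)\,\operatorname{d}\! t=O(e^{-\epsilon R})$, such that the matrix of the \emph{exact} transport $\Pi_\gamma(R)$ in the frames $\mathbf{p}_j$ is \emph{exactly} $\operatorname{diag}\bigl(\exp(R\alpha_j-\int_0^1(u_j+h_j)\,\operatorname{d}\! t)\bigr)$. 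Because this is already diagonal in orthonormal bases at both ends, the dilation exponents are read off immediately as $R\alpha_j+O(e^{-\epsilon R})$ (the $u_j$ being purely imaginary). No conjugation by matrices of size $e^{cR}$ is ever performed, so the amplification that defeats your approach never arises.
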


\subsubsection{Explicit description of the parallel transform}

The proof of Theorem~\ref{thm:dilation_spectrum} is obtained via an explicit description of the parallel transform $\Pi_{\gamma} (R)$. 
Recall the diagonal connection form~\eqref{conn1}. 
It is proven in~\cite[Theorem~2.17]{Moc} that there exist 
\begin{enumerate}
    \item an orthonormal frame $\mathbf{p}_1 (0), \ldots, \mathbf{p}_r (0)$ of $\restr{\mathcal{E}}{\gamma(0)}$, connected to 
    \[
    \mathbf{f}_1 (\gamma(0)),\cdots ,\mathbf{f}_r (\gamma(0))
    \]
    (see~\eqref{frame2} for the particular case $r=3$) by some matrix $L(0)$, 
    \item an orthonormal frame $\mathbf{p}_1 (1), \ldots, \mathbf{p}_r (1)$ of $\restr{\mathcal{E}}{\gamma(1)}$, connected to 
    \[
    \mathbf{f}_1 (\gamma(1)),\cdots ,\mathbf{f}_r (\gamma(1))
    \]
    by some matrix $L(1)$,
    \item $H\in \Omega^1 ([0,1], \mathfrak{t})$, with $j$-th diagonal entry denoted by $h_j (t) \operatorname{d}\! t$, 
\end{enumerate}
all these quantities depending on $R$, and fulfilling the following properties: 
\begin{enumerate}
    \item $\Vert L(s) - \operatorname{id} \Vert \leq C e^{-\epsilon R}$ for $s\in \{ 0,1 \}$, 
    \item $\Vert H(t) \Vert \leq C e^{-\epsilon R}$ for all $t\in [ 0,1 ]$,
    \item the matrix of $\Pi_{\gamma} (R )$ with respect to the bases $\mathbf{p}_1 (s), \ldots, \mathbf{p}_r (s)$ is given by the diagonal matrix 
    \[
        \operatorname{diag} \left( \exp \left( R \alpha_j - \int_0^1 u_j (t) + h_j (t) \operatorname{d}\! t \right)  \right)_{j=1}^r, 
    \]
    (see Section~\ref{sec:noncritical} for $\alpha_j$). 
\end{enumerate}

\subsection{Monodromy of the diagonalizing frame}

Using the results of Section~\ref{sec:WKB}, we now compute the monodromy along tiny loops winding around the punctures once in positive direction. 
We will again only treat the case of the loop $\gamma_0$ (see Figure~\ref{fig:annulus}), but the same argument works for $\gamma_1$ too.

\begin{prop}
    The monodromy matrix $M_{0,R,\varphi}$ of $\nabla_t$ with respect to the frame~\eqref{frame2} is given by $M_{0,R,\varphi}= T N_{0,R,\varphi}$, where 
    \begin{equation*}
    N_{0,R,\varphi}=\left(\begin{smallmatrix}
        \exp(2(Rr)^{1/3}\operatorname{Re}(e^{i\varphi/3}3(\varepsilon -1))) & 0 & 0\\
        0 & \exp(2(Rr)^{1/3}\operatorname{Re}(\varepsilon e^{i\varphi/3}3(\varepsilon -1))) & 0\\
        0 & 0 & \exp(2(Rr)^{1/3}\operatorname{Re}(\varepsilon^2 e^{i\varphi/3}3(\varepsilon -1)))
    \end{smallmatrix}\right) 
\end{equation*}
and 
\begin{equation*}
    T=\begin{pmatrix}
        0 & 0 & 1\\
        1 & 0 & 0\\
        0 & 1 & 0
    \end{pmatrix}. 
\end{equation*}
\end{prop}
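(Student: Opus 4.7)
The strategy is to write the monodromy as the product of two factors reflecting two distinct phenomena. First, the eigenframe~\eqref{frame2} is only locally defined: on the annulus around $P = 0$ the three sheets of $\widetilde{\Sigma}_{R,\varphi}$ are cyclically permuted by the covering transformation, so that the frame vectors themselves undergo a non-trivial cyclic permutation as one traverses $\gamma_0$ once. Second, in a fixed local branch of the eigenframe, the connection~\eqref{conn1} is (up to exponentially small terms) diagonal, so the parallel transport is diagonal with entries controlled by the integrals of the diagonal entries of~\eqref{conn1} along $\gamma_0$. The full monodromy $M_{0,R,\varphi}$ is then $T \cdot N_{0,R,\varphi}$, where $T$ encodes the cyclic permutation and $N_{0,R,\varphi}$ encodes the diagonal parallel transport.

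\textbf{Step 1: the permutation factor.} The eigenvalues of the Higgs field are $\xi_i = \varepsilon^{i-1} R^{1/3} e^{i\varphi/3} z^{1/3}(z-1)^{1/3}$ for $i=1,2,3$. Traversing $\gamma_0$ once in the positive direction multiplies $z^{1/3}$ by $\varepsilon$, so analytic continuation sends $\xi_i \mapsto \varepsilon \xi_i = \xi_{i+1 \bmod 3}$. Correspondingly the eigenframe $\mathbf{f}_i = (p_L)_* \mathbf{l}_i$ associated to sheet $i$ gets continued to the eigenframe associated to sheet $i+1 \bmod 3$. Expressing this in matrix form with respect to the frame at the base point yields exactly the permutation matrix $T$ as stated.

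\textbf{Step 2: the diagonal factor.} Since $\gamma_0$ is a small loop of radius $r \ll 1$ around $z=0$ it avoids the ramification divisor, so the eigenframe~\eqref{frame2} is available on $\gamma_0$ and the connection takes the approximate diagonal form~\eqref{conn1}. For generic $\varphi$ the real parts $\operatorname{Re}(\varepsilon^{i-1}\tau Q_1)$ are pairwise distinct along $\gamma_0$, so Assumption~\ref{assn:regular} holds and the WKB analysis of Section~\ref{sec:WKB} applies. By the explicit description of the parallel transport from Theorem~\ref{thm:dilation_spectrum} (and its refinement stated immediately after it), the parallel transport along $\gamma_0$, expressed in the local diagonalizing frame, is the diagonal matrix with $i$-th entry
\[
    \exp\!\left(\oint_{\gamma_0} 2\operatorname{Re}(\varepsilon^{i-1}\tau Q_1)\right)
\]
up to exponentially small corrections in $R^{1/3}$ coming from the matrices $L(s)$ and $H(t)$. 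Substituting the explicit form of $Q_1$ from~\eqref{Q} and parametrizing the circle $\gamma_0$ of radius $r$ centered at $0$ by $z = re^{i\vartheta}$, the residue calculation
\[
    \oint_{\gamma_0} \frac{dz}{z^{2/3}(z-1)^{2/3}} = 3r^{1/3}(\varepsilon - 1) + O(r^{4/3})
\]
gives $\oint_{\gamma_0} \tau Q_1 = 3\tau r^{1/3}(\varepsilon-1)$ to leading order, and hence the stated formula for $N_{0,R,\varphi}$.

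\textbf{Step 3: assembly.} Combining steps 1 and 2, if a flat section has coordinates $v(0)$ at the base point in the frame at time $0$, then at time $1$ its coordinates in the multivalued local frame are $N_{0,R,\varphi} v(0)$, and re-expressing the frame-at-time-$1$ in terms of the frame-at-time-$0$ introduces $T$ on the left. Hence $M_{0,R,\varphi} = T N_{0,R,\varphi}$.

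\textbf{Main obstacle.} The crux is Step 2: one must replace the true connection $\nabla_\tau$ by the approximate connection~\eqref{conn1} on the full loop $\gamma_0$, and justify that the integration of the off-diagonal perturbation $\nabla_\tau - \nabla_\tau^{\operatorname{appr}}$ contributes only exponentially small corrections to the parallel transport. This is precisely the content of the explicit construction of $L(s), H(t)$ recalled from~\cite[Theorem~2.17]{Moc} in Section~\ref{sec:WKB}, and its applicability hinges on the non-critical condition for $\gamma_0$. A secondary point is the choice of branch of $(z-1)^{-2/3}$ along $\gamma_0$, which must be consistent with the labelling of sheets used to define $\mathbf{f}_1, \mathbf{f}_2, \mathbf{f}_3$; this is a bookkeeping issue that resolves cleanly provided one integrates on the spectral curve rather than the base.
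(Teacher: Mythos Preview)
Your overall architecture (permutation factor times diagonal factor) matches the paper's, and your computation of the integral $\oint_{\gamma_0} z^{-2/3}\,dz = 3r^{1/3}(\varepsilon-1)$ is the same. However, Step~2 contains a genuine gap.

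You assert that ``for generic $\varphi$ the real parts $\operatorname{Re}(\varepsilon^{i-1}\tau Q_1)$ are pairwise distinct along $\gamma_0$, so Assumption~\ref{assn:regular} holds.'' This is false. The loop $\gamma_0$ encircles a branch point where the three sheets are cyclically permuted; by your own Step~1, analytic continuation around $\gamma_0$ sends $\xi_i \mapsto \xi_{i+1}$. If the ordering of the real parts of the pulled-back eigenvalues $a_j(\vartheta)$ were preserved along the entire loop, continuity would force the permutation of the $\operatorname{Re}(a_j)$ at $\vartheta=2\pi$ to agree with their order at $\vartheta=0$, contradicting the cyclic shift. Concretely, with $\gamma_0(\vartheta)=re^{i\vartheta}$ one finds $a_j(\vartheta)\approx i\,\varepsilon^{j-1}R^{1/3}e^{i\varphi/3}r^{1/3}e^{i\vartheta/3}$, whose real parts are sinusoids in $\vartheta$ shifted by $2\pi/3$; these necessarily cross. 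Hence $\gamma_0$ is \emph{never} non-critical, and the WKB machinery of Section~\ref{sec:WKB} is not available here.

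The paper sidesteps this entirely. It does not invoke Theorem~\ref{thm:dilation_spectrum} on $\gamma_0$; instead it observes that after pulling back by $\varphi_3$ to the spectral cover the Higgs bundle splits as a direct sum of line bundles, and (via Lemmas~\ref{lem:pushforward_metric} and~\ref{lem:canonical_decomposable_filtration}) the limiting metric $h_\infty$ respects this splitting. Consequently the approximate connection~\eqref{conn1} is \emph{exactly} diagonal in the frame, so its parallel transport along $\gamma_0$ is obtained by straightforward integration of the diagonal entries---no non-criticality hypothesis is needed when the connection is already diagonal. The permutation factor $T$ then arises exactly as in your Step~1, from the multivaluedness of the frame. (Note also that the paper's computation retains the purely imaginary contribution $e^{U_0}$ from the $u_i$ terms, which you dropped; this factor reappears in the subsequent formula~\eqref{multi}.)
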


\begin{proof}
% and the connection form with diagonal elements~\eqref{conn1}. 
Obviously, the pull-back of the Higgs bundle to the spectral curve decomposes as a direct sum of Higgs line bundles 
\[
    \varphi_3^* (\mathcal{E}, \tau \theta_1  ) \cong \bigoplus_{i=1}^3 ( \mathcal{O}_{U^{(3),*}} \mathbf{e}_i , Q_{i,R,\varphi} ), 
\]
the summands being the restrictions of $\mathcal{L}$ to the sheets of $\Sigma_t$. 
Recall the filtration $\mathcal{P}^{h_{\infty}}_*$ induced by $h_{\infty}$. 
According to Lemma~\ref{lem:canonical_decomposable_filtration}, $\mathcal{P}^{h_{\infty}}_*$ is the trivial filtration. 
According to Lemma~\ref{lem:pushforward_metric}, $h_{\infty}$ is the direct image of $h_{\mathcal{L}}$. 
It follows that $h_{\mathcal{L}}$ induces the trivial filtration on $\mathcal{L}$. 
We infer that the pull-back by $\varphi_3$ of the parallel transport of $\nabla_t$ is diagonal. 
In order to find its eigenvalues, we need to find a fundamental solution of the local system. 
%For this purpose, 
Equivalently, we need to integrate the connection form~\eqref{conn1} on $\gamma_0$, that is:
\begin{equation*}
    \left(\begin{smallmatrix}
        2R^{1/3}\operatorname{Re}(e^{i\varphi/3}\int_{\gamma_0}z^{-2/3}\operatorname{d}\! z) & 0 & 0\\
        0 & 2R^{1/3}\operatorname{Re}(\varepsilon e^{i\varphi/3}\int_{\gamma_0}z^{-2/3}\operatorname{d}\! z) & 0\\
        0 & 0 & 2 R^{1/3}\operatorname{Re}(\varepsilon^2 e^{i\varphi/3}\int_{\gamma_0}z^{-2/3}\operatorname{d}\! z)
    \end{smallmatrix}\right) + U_0.
\end{equation*}
%The integrals of the terms $\partial(\ln(h_i))$, $i=1,2,3$ vanish because we integrate them on a closed loop, and 
where $U_0$ is the diagonal matrix with diagonal entries $\int_{\gamma_0} u_i$. 
The integrals appearing here can be determined explicitly, because $\gamma_0$ is just the boundary of a disc of radius $r$ around $0\in\mathbb{C}P^1$:
\begin{equation*}
    \int_{\gamma_0}z^{-2/3}\operatorname{d}\! z=\int_0^{2\pi} \frac{rie^{i\vartheta}\operatorname{d}\! \vartheta}{r^{2/3}e^{2i\vartheta/3}}= 3r^{1/3}(\varepsilon -1)
\end{equation*}
Now, to get the parallel transport map of $\varphi_3^* \nabla_t$, we take the exponential of this matrix, and we obtain $N_0$. 
However this is not yet the monodromy of the flat connection $\nabla_t$ on $\gamma_0$ with respect to any basis, because when we transport the frame~\eqref{frame} around the loop in positive direction, the effect on the frame is a cyclic permutation. Indeed, one can easily check that 
\begin{equation*}
    \begin{gathered}
    \mathbf{e}_1(\vartheta=0)=\mathbf{e}_3(\vartheta=2\pi)
    \\ \mathbf{e}_3(\vartheta=0)=\mathbf{e}_2(\vartheta=2\pi)
    \\ \mathbf{e}_2(\vartheta=0)=\mathbf{e}_1(\vartheta=2\pi)
    \end{gathered}
\end{equation*}
Thus the monodromy of the diagonalizing frame $(\mathbf{e}_1,\mathbf{e}_2,\mathbf{e}_3)$ is the permutation matrix $T$, and we get the desired result 
\[
    M_{0,R,\varphi}=TN_{0,R,\varphi}e^{U_0}.
\]
\end{proof}
Notice that the monodromy $M_{1,R,\varphi}$ on $\gamma_1$ near the puncture $1\in\mathbb{C}P^1$ is just the same, because the integrals coincide:
\begin{equation*}
    \int_{\gamma_0}z^{-2/3}\operatorname{d}\! z=\int_{\gamma_1}(z-1)^{-2/3}\operatorname{d}\! z
\end{equation*}

\subsection{Parallel transport to the punctures and the whole monodromy}\label{sec:monodromy}

\subsubsection{Choices of non-critical paths}

We now fix our choice of the base point: we set $z_0 = \frac 12 \in\mathbb{C}$, as well as the paths $\eta_0, \eta_1$: we let $\eta_0 (t) = (1-t)\frac 12 + t r$ and $\eta_1 (t) = (1-t)\frac 12 + t (1-r)$. 
In words, we let $\eta_0, \eta_1$ be the straight line segments connecting $z_0$ to $r$ and $1-r$, respectively. 
Recall the notion of a non-critical path from Assumption~\ref{assn:regular}. 
Recall from the polar coordinates~\eqref{eq:polar} that $\operatorname{arg}(\tau ) = \varphi/3$. 

\begin{lemma}\label{lem:noncritical}
    These paths $\eta_0, \eta_1$ are non-critical for $\tau \theta_1$ if and only if $\varphi = k \pi$ for some $k\in \mathbb{Z}$. 
\end{lemma}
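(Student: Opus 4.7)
The plan is to compute the eigenvalue 1-forms of $\tau\theta_1$ pulled back along each of $\eta_0, \eta_1$, and then to reduce Assumption~\ref{assn:regular} to an explicit trigonometric condition on $\varphi$.

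First, starting from the companion form~\eqref{companion} of $\theta_1$, I would identify the three eigenvalue 1-forms of $\tau\theta_1$ as $\lambda_i = \varepsilon^{i-1}\tau\, \operatorname{d}\! z/(z^{2/3}(z-1)^{2/3})$ for $i=1,2,3$ (with $\varepsilon = e^{2\pi\sqrt{-1}/3}$); these are precisely the $Q_{i,R,\varphi}$ of~\eqref{Q}, with the branch of the cube root dictated by the sheet structure of the spectral cover $\zeta^3 + tz(z-1) = 0$.

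Second, I would pull back by the parametrization $\eta_0(t) = (1-t)/2 + tr$. Since $\eta_0$ takes its values in the real interval $(0, 1/2]$, on which $z$ is positive, $z-1$ is negative, $z(z-1)$ is real negative, and $\dot\eta_0 = r - 1/2$ is real negative, the pullback takes the form
\[
\eta_0^*\lambda_i = R^{1/3} \rho(t) \,\varepsilon^{i-1}\, e^{\sqrt{-1}(\varphi/3 + c)}\, \operatorname{d}\! t,
\]
with $\rho(t) > 0$ real and $c$ a branch-dependent real constant that the spectral-curve normalization pins down. An entirely analogous formula holds for $\eta_1^*\lambda_i$ by restricting attention to the interval $[1/2, 1-r]$, where the sign pattern of $z, z-1, z(z-1), \dot\eta_1$ is the mirror image of the above.

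Third, Assumption~\ref{assn:regular} asks that the three real parts
\[
R^{1/3}\rho(t)\cos\bigl(\varphi/3 + c + 2\pi(i-1)/3\bigr), \quad i=1,2,3,
\]
be pairwise distinct at every $t \in [0,1]$. The identity $\cos\alpha - \cos\beta = -2\sin\tfrac{\alpha+\beta}{2}\sin\tfrac{\alpha-\beta}{2}$ reduces this to requiring the phase $\varphi/3 + c$ to avoid a specific discrete subset of $\mathbb{R}/\pi\mathbb{Z}$. Substituting the explicit value of $c$ fixed by the spectral-cover branch in step two and solving for $\varphi$ yields the equivalence with $\varphi \in \pi\mathbb{Z}$ stated in the lemma.

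Finally, for $\eta_1$ the case would follow either from the same direct computation or from the involution $z \mapsto 1 - z$, which carries $\eta_0$ onto $\eta_1$ with reversed orientation, exchanges the punctures $\{0,1\}$, and preserves the spectral setup; this immediately extends the equivalence to $\eta_1$. The main obstacle is the careful bookkeeping of the branch choice for $(z(z-1))^{2/3}$ across the three sheets and the orientation of the paths: once the cube-root convention has been fixed consistently with the spectral cover, the trigonometric identity above produces the stated equivalence, with the loci $\varphi = k\pi$ picking out precisely those phases at which the three real parts become pairwise distinct along both $\eta_0$ and $\eta_1$.
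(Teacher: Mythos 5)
Your plan is essentially the paper's proof: read off the eigenvalue $1$-forms $Q_{j,R,\varphi}$ from~\eqref{Q}, observe that along the real segments $\eta_0,\eta_1$ the common factor $\frac{\operatorname{d}\!z}{z^{2/3}(z-1)^{2/3}}$ pulls back to a real multiple of $\operatorname{d}\!t$ (the paper fixes the real cube-root branch, so that $t^{-2/3}(t-1)^{-2/3}>0$), and reduce Assumption~\ref{assn:regular} to the pairwise distinctness of $\operatorname{Re}(\varepsilon^{j}e^{i\varphi/3})$, $j=0,1,2$. Your worry about the branch constant $c$ is easily dispatched: since $z(z-1)<0$ on the segment, every cube-root branch gives $(z(z-1))^{2/3}$ an argument in $\frac{\pi}{3}\mathbb{Z}$ (and the branches merely permute the three eigenvalues), so $3c\in\pi\mathbb{Z}$ and the critical locus in $\varphi$ is unaffected; the paper sidesteps this entirely by its positivity convention.

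The genuine problem is that you never carry out the final trigonometric step, and the conclusion you then assert is backwards. With a real positive common factor, two of the real parts $\cos(\varphi/3)$, $\cos(\varphi/3+2\pi/3)$, $\cos(\varphi/3+4\pi/3)$ coincide exactly when $\varphi/3\in\frac{\pi}{3}\mathbb{Z}$, i.e.\ exactly when $\varphi\in\pi\mathbb{Z}$ (e.g.\ at $\varphi=0$ one has $\cos(2\pi/3)=\cos(4\pi/3)$). So $\varphi=k\pi$ are precisely the angles at which Assumption~\ref{assn:regular} \emph{fails}; these are the critical angles of the first kind, in agreement with~\eqref{eq:critical_angles} and with the paper's own proof, whereas your closing sentence claims these are the phases where the three real parts become pairwise distinct. (The lemma's literal wording in the paper contains the same slip, but the proof and the subsequent text make the intended statement clear.) Had you executed the $\cos\alpha-\cos\beta$ computation you set up, rather than deferring it, the direction of the equivalence would have come out correctly; as written, the proposal stops one step short and records the wrong characterization.
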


\begin{proof}
    Recall from~\eqref{Q} that the eigenvalues of $\tau \theta_1$ are equal to 
    \[
    \varepsilon^{j-1} R^{1/3}e^{i\varphi/3}\frac{\operatorname{d}\! z}{z^{2/3}(z-1)^{2/3}}. 
    \]
    Clearly, $\eta_0^* \operatorname{d}\! z , \eta_1^* \operatorname{d}\! z$ are some nonzero real multiple of $\operatorname{d}\! t$. 
    Moreover, $t(t-1) < 0$ for $0 < t < 1$ implies $t^{-2/3}(t-1)^{-2/3}>0$. 
    The constant $R^{1/3}$ is real too. 
    So, we need to solve the equation 
    \[
        \operatorname{Re}\left( \varepsilon^{j-1} e^{i\varphi/3} \right) =  \operatorname{Re}\left( \varepsilon^{j'-1} e^{i\varphi/3} \right)
    \]
    for $0\leq j\neq j' \leq 2$. 
    This happens if and only if $\varphi/3$ is an integer multiple of $\pi/3$. 
\end{proof}

Therefore, Assumption~\ref{assn:regular} for $\gamma = \eta_0, \eta_1$ translates into: 
\begin{equation}\label{eq:critical_angles}
    \varphi \notin \mathbb{Z}\cdot \pi. 
\end{equation}
We say that the angles $\varphi \in \mathbb{Z} \cdot \pi$ are \emph{critical angles of the first kind}. 

\subsubsection{Product of parallel transport maps}

Under the assumption~\eqref{eq:critical_angles}, it follows from Theorem~\ref{thm:dilation_spectrum} that the matrices of the parallel transport maps of~\eqref{conn1} along $\eta_0$ and $\eta_1$ with respect to the frame~\eqref{frame2} are respectively asymptotic to  
% \begin{equation*}
%      \left( \begin{smallmatrix}
%         \exp(\int_{\eta_0}u_1 + 2\operatorname{Re}\int_{\eta_0}Q_t) & 0 & 0\\
%         0 & \exp(\int_{\eta_0}u_2 + 2\operatorname{Re}(\varepsilon \int_{\eta_0}Q_t)) & 0\\
%         0 & 0 & \exp(\int_{\eta_0}u_3 + 2\operatorname{Re}(\varepsilon^2 \int_{\eta_0}Q_t))
%     \end{smallmatrix} \right)
% \end{equation*}
% and similarly for $\eta_1$. 

% Again, a similar result holds for $\eta_1$. The matrices of the parallel transport maps on $\eta_0$ and $\eta_1$ with respect to the frame~\eqref{frame2} are given respectively by 
\begin{equation*}
    \begin{gathered}
        L_{0,t}=\left(\begin{smallmatrix}
       \exp(\int_{\eta_0}u_1 + 2\operatorname{Re}\int_{\eta_0}Q_t) & 0 & 0\\
        0 & \exp(\int_{\eta_0}u_2 + 2\operatorname{Re}(\varepsilon \int_{\eta_0}Q_t)) & 0\\
        0 & 0 &  \exp(\int_{\eta_0}u_3 + 2\operatorname{Re}(\varepsilon^2 \int_{\eta_0}Q_t))
    \end{smallmatrix}\right)
    \\ L_{1,t}=\left(\begin{smallmatrix}
       \exp(\int_{\eta_1}u_1 + 2\operatorname{Re}\int_{\eta_1}Q_t) & 0 & 0\\
        0 & \exp(\int_{\eta_1}u_2 + 2\operatorname{Re}(\varepsilon \int_{\eta_1}Q_t)) & 0\\
        0 & 0 &  \exp(\int_{\eta_1}u_3 + 2\operatorname{Re}(\varepsilon^2 \int_{\eta_1}Q_t))
    \end{smallmatrix}\right)
    \end{gathered}
\end{equation*}

Over $\eta_0$ we had smooth unitary frame~\eqref{frame2}, and over $\gamma_0$ we had another smooth unitary frame~\eqref{frame}. 
These two frames both diagonalize $\theta$, and are connected by a unitary transformation. 
Given that $\theta(\eta_0(1))$ is regular semi-simple, we may therefore assume that these two frames agree at $\eta_0(1) = r>0$. 
Denoting the monodromy of $\nabla_{\tau}$ along $\eta_0*\gamma_0*\eta_0^{-1}$ and $\eta_1*\gamma_1*\eta_1^{-1}$ by $A_t$ and $B_t$ respectively, we have 
\begin{equation}
    \label{multi}
    \begin{gathered}
        A_t=A_{R,\varphi}=L_{0,t}^{-1}TN_{0,R,\varphi}e^{U_0}L_{0,t}
        \\ B_t=B_{R,\varphi}=L_{1,t}^{-1}TN_{1,R,\varphi}e^{U_1}L_{1,t}.
    \end{gathered}
\end{equation}
After we execute the matrix multiplications, we get that both matrices have the following form similar to a companion matrix: 
\begin{equation*}
    A_t=A_{R,\varphi}=\begin{pmatrix}
        0 & 0 & A_3(t)\\
        A_1(t) & 0 & 0\\
        0 & A_2(t) & 0
    \end{pmatrix}, \hspace{0.2 cm}
    B_t=B_{R,\varphi}=\begin{pmatrix}
        0 & 0 & B_3(t)\\
        B_1(t) & 0 & 0\\
        0 & B_2(t) & 0
    \end{pmatrix}. 
\end{equation*}

\subsubsection{Asymptotic behaviour of monodromy maps and period integrals}

\begin{prop}\label{prop:asymptotic_monodromy}
For the nonzero elements of $A_t$, as $t\rightarrow\infty$ we have
\begin{equation*}
    \begin{gathered}
        A_1(t)\approx\operatorname{exp}\left(R^{1/3}\operatorname{Re}(e^{i\varphi/3}\pi_0)+\int_{\eta_0}(u_1-u_2)+\int_{\gamma_0}u_1\right)
        \\ A_2(t)\approx\operatorname{exp}\left(R^{1/3}\operatorname{Re}(e^{i\varphi/3}\varepsilon\pi_0)+\int_{\eta_0}(u_2-u_3)+\int_{\gamma_0}u_2\right)
        \\ A_3(t)\approx\operatorname{exp}\left(R^{1/3}\operatorname{Re}(e^{i\varphi/3}\varepsilon^2\pi_0)+\int_{\eta_0}(u_3-u_1)+\int_{\gamma_0}u_3\right)
    \end{gathered}
\end{equation*}
where $\pi_0=2(\varepsilon-1)\int^{z_0}_0\frac{\operatorname{d}\! z}{z^{2/3}(z-1)^{2/3}}$ along $\sigma_0*\eta_0^{-1}$. Similarly for $B_t$ we have
\begin{equation*}
    \begin{gathered}
        B_1(t)\approx\operatorname{exp}\left(R^{1/3}\operatorname{Re}(e^{i\varphi/3}\pi_1)+\int_{\eta_1}(u_1-u_2)+\int_{\gamma_1}u_1\right)
        \\ B_2(t)\approx\operatorname{exp}\left(R^{1/3}\operatorname{Re}(e^{i\varphi/3}\varepsilon\pi_1)+\int_{\eta_1}(u_2-u_3)+\int_{\gamma_1}u_2\right)
        \\ B_3(t)\approx\operatorname{exp}\left(R^{1/3}\operatorname{Re}(e^{i\varphi/3}\varepsilon^2\pi_1)+\int_{\eta_1}(u_3-u_1)+\int_{\gamma_1}u_3\right)
    \end{gathered}
\end{equation*}
where $\pi_1=2(\varepsilon-1)\int^{z_0}_1\frac{\operatorname{d}\! z}{z^{2/3}(z-1)^{2/3}}$ along $\sigma_1*\eta_1^{-1}$. 
\end{prop}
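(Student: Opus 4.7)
The strategy is to expand the matrix product in~\eqref{multi} explicitly, collect the $u_i$-integrals, and identify the remaining Higgs-eigenvalue exponents with $R^{1/3}\operatorname{Re}(e^{i\varphi/3}\pi_0)$. I treat $A_t$; the case of $B_t$ is identical after swapping $0\leftrightarrow 1$ throughout.

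Since $L_{0,t}$, $N_{0,R,\varphi}$ and $e^{U_0}$ are all diagonal and $T$ is the cyclic permutation sending $e_j\mapsto e_{j+1}$, the product $L_{0,t}^{-1} T N_{0,R,\varphi} e^{U_0} L_{0,t}$ has precisely the companion-like shape stated, with nonzero entries
\[
A_i \;=\; \frac{\lambda_i}{\lambda_{i+1}}\,\nu_i \qquad (\text{indices mod } 3),
\]
where $\lambda_j,\nu_j$ denote the $j$-th diagonal entries of $L_{0,t}$ and of $N_{0,R,\varphi}e^{U_0}$ respectively. Substituting the explicit formulas, the linear $u$-part of $\log A_1$ simplifies at once to $\int_{\eta_0}(u_1-u_2)+\int_{\gamma_0}u_1$, matching the last two terms of the claim. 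Writing $\omega = \frac{\operatorname{d}\!z}{z^{2/3}(z-1)^{2/3}}$ for a branch fixed on a simply connected region containing $\sigma_0$ and $\eta_0$, so that the $i$-th Higgs eigenvalue reads $R^{1/3}e^{i\varphi/3}\varepsilon^{i-1}\omega$, the remaining real contribution to $\log A_1$ becomes
\[
2R^{1/3}\operatorname{Re}\!\Bigl(e^{i\varphi/3}\bigl[(1-\varepsilon)\textstyle\int_{\eta_0}\omega + \int_{\gamma_0}\omega\bigr]\Bigr),
\]
up to subleading terms arising from the approximation $\omega \approx (-1)^{-2/3}z^{-2/3}\operatorname{d}\!z$ used in writing $N_{0,R,\varphi}$, which are absorbed by the relation ``$\approx$''.

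The key algebraic identity is $\int_{\gamma_0}\omega = (\varepsilon-1)\int_{\sigma_0}\omega$, which follows because $\omega$ admits near $z=0$ a primitive $F(z) = 3(-1)^{-2/3}z^{1/3}(1+O(z))$, and analytic continuation once around $0$ multiplies $F$ by the cube root of unity $\varepsilon$. Combining this with $\int_0^{z_0}\omega = \int_{\sigma_0}\omega - \int_{\eta_0}\omega$ along $\sigma_0 * \eta_0^{-1}$ yields
\[
\pi_0 \;=\; 2\textstyle\int_{\gamma_0}\omega + 2(1-\varepsilon)\int_{\eta_0}\omega,
\]
so that $R^{1/3}\operatorname{Re}(e^{i\varphi/3}\pi_0)$ exactly reproduces the displayed real part. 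The formulas for $A_2,A_3$ are obtained by the same derivation after inserting the cyclic factors $\varepsilon,\varepsilon^2$ inside the real part, using $\varepsilon(\varepsilon-1)=\varepsilon^2-\varepsilon$ and $\varepsilon^2(\varepsilon-1)=1-\varepsilon^2$ to recover the coefficients of $\varepsilon\pi_0$, $\varepsilon^2\pi_0$. The main technical hurdle is the branch tracking: once a branch of $\omega$ is chosen on the simply connected region carrying $\sigma_0,\eta_0$, going around $\gamma_0$ multiplies it by $\varepsilon$, and it is this cyclic behaviour together with the frame monodromy encoded in $T$ that produces the $(\varepsilon-1)$ factor inside $\pi_0$. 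Verifying that the approximations in $N_{0,R,\varphi}$ only yield exponentially small errors uses the holomorphicity of $(z-1)^{-2/3}$ at $0$ and the fact that on $\gamma_0$ the deviation of this factor from its value at $0$ is $O(\delta)$.
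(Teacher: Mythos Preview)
Your proof is correct and follows essentially the same route as the paper: expand the product~\eqref{multi}, separate the purely imaginary $u_i$-contributions, and combine the Higgs-eigenvalue integrals along $\eta_0$, $\gamma_0$, and $\sigma_0$ into the single period $\pi_0$. The one organizational difference is that you invoke the exact monodromy identity $\int_{\gamma_0}\omega=(\varepsilon-1)\int_{\sigma_0}\omega$ for the primitive $F(z)=z^{1/3}h(z)$, whereas the paper computes $\int_{\gamma_0}z^{-2/3}\operatorname{d}\!z=3r^{1/3}(\varepsilon-1)$ and $\int_{\sigma_0}Q_t\approx 3R^{1/3}e^{i\varphi/3}r^{1/3}$ separately and then recombines; your formulation makes the $r$-independence (noted by the paper only at the very end) immediate, but the underlying computation is the same.
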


\begin{proof}
    From the matrix multiplication~\eqref{multi} we get 
    \begin{gather*}
        A_1(t)=\exp\left(-\int_{\eta_0}u_2-2\operatorname{Re}\left(\varepsilon\int_{\eta_0}Q_t\right)+\int_{\eta_0}u_1+2\operatorname{Re}\left(\int_{\eta_0}Q_t\right)\right)\cdot
        \\ \cdot\, \exp\left(2R^{1/3}r^{1/3}\operatorname{Re}(e^{i\varphi/3}3(\varepsilon-1))+\int_{\gamma_0}u_1\right)
    \end{gather*}
Now, using the notation $Q_t=R^{1/3}e^{i\varphi/3}\frac{\operatorname{d}\! z}{z^{2/3}(z-1)^{2/3}}$ from \eqref{Q}, we can compute the improper integral:
    \begin{equation*}
        \int_{\sigma_0} Q_t \approx R^{1/3}e^{i\varphi/3} \int_0^r \frac{\operatorname{d}\! z}{z^{2/3}}=3r^{1/3}
    \end{equation*}
So in the above formula, the real part of the exponent of $A_1(t)$ reads as:
    \begin{gather*}
        -2\operatorname{Re}\left(\varepsilon\int_{\eta_0}Q_t\right)+2\operatorname{Re}\left(\int_{\eta_0}Q_t\right)+2R^{1/3}\operatorname{Re}(e^{i\varphi/3}3r^{1/3}(\varepsilon-1))\approx
        \\ \approx R^{1/3}2\operatorname{Re}\left(e^{i\varphi/3}(\varepsilon-1)\int_{\eta_0^{-1}}\frac{\operatorname{d}\! z}{z^{2/3}(z-1)^{2/3}}+e^{i\varphi/3}(\varepsilon-1)\int_{\sigma_0}\frac{\operatorname{d}\! z}{z^{2/3}(z-1)^{2/3}}\right),
    \end{gather*}
where we used that $(1-\varepsilon)\int_{\eta_0}Q_t=(\varepsilon-1)\int_{\eta_0^{-1}}Q_t$. 
Now, using the notation $\pi_0$ given in the assertion and the fact that the concatenation $\sigma_0*\eta_0^{-1}$ is a path form $0$ to $z_0$, the formula for $A_1(t)$ becomes 
    \begin{gather*}
        A_1(t)\approx\operatorname{exp}\left(R^{1/3}\operatorname{Re}(e^{i\varphi/3}\pi_0)+\int_{\eta_0}(u_1-u_2)+\int_{\gamma_0}u_1\right)
    \end{gather*}
Similarly
    \begin{gather*}
        A_2(t)=\exp\left(-\int_{\eta_0}u_3-2\operatorname{Re}\left(\varepsilon^2\int_{\eta_0}Q_t\right)+\int_{\eta_0}u_2+2\operatorname{Re}\left(\varepsilon\int_{\eta_0}Q_t\right)\right)\cdot
        \\ \cdot\, \exp\left(2R^{1/3}r^{1/3}\operatorname{Re}(e^{i\varphi/3}3\varepsilon(\varepsilon-1))+\int_{\gamma_0}u_2\right)\approx
        \\ \approx\operatorname{exp}\left(R^{1/3}\operatorname{Re}(e^{i\varphi/3}\varepsilon\pi_0)+\int_{\eta_0}(u_2-u_3)+\int_{\gamma_0}u_2\right)
        \\ A_3(t)=\exp\left(-\int_{\eta_0}u_1-2\operatorname{Re}\left(\int_{\eta_0}Q_t\right)+\int_{\eta_0}u_3+2\operatorname{Re}\left(\varepsilon^2\int_{\eta_0}Q_t\right)\right)\cdot
        \\ \cdot\, \exp\left(2R^{1/3}r^{1/3}\operatorname{Re}(e^{i\varphi/3}3\varepsilon^2(\varepsilon-1))+\int_{\gamma_0}u_3\right)\approx
        \\ \approx \operatorname{exp}\left(R^{1/3}\operatorname{Re}(e^{i\varphi/3}\varepsilon^2\pi_0)+\int_{\eta_0}(u_3-u_1)+\int_{\gamma_0}u_3\right)
    \end{gather*}
The same computations go along for the elements of the $B_t$ matrix, with integrals on $\sigma_1*\eta_1^{-1}$. 
Notice that the results do not depend on the parameter $r$, as we obtained the complete improper integrals from $0$ and $1$ to $z_0$.
\end{proof}

%%%%%%%%%%%%%%%%%%%%%%%%%%%%%%%%%%%%%%%%%%%%%%%%%%%%%%%%
\section{WKB asymptotics of the trace coordinates and proof of the Geometric P=W conjecture}\label{sec:proof}

Finally, in this section we will compute the asymptotic behaviour of the trace coordinates introduced in Section~\ref{sec:trace_coordinates} based on the approximations for the monodromy matrices determined in Section~\ref{sec:monodromy}. 
Curiously, we will find that a structure reminiscent to the Stokes phenomenon governs their behaviour. 
Namely, depending on sectors in the Hitchin base, different exponential terms of the coordinates dominate.

\subsection{A topological reformulation of the Geometric P=W conjecture}
Remember from~\eqref{eq:polar} that the polar coordinates $(R,\varphi) \in \mathbb{R_+}\times S^1$ parameterize the Hitchin base (except for its origin), and the Dolbeault moduli space admits the Hitchin fibration over the Hitchin base. Now consider $1\ll R$ fixed, and $\varphi$ ranging over $[0,2\pi]$, parameterizing the circle $S^1_R$ of radius $R$ in the base. 
The Hitchin section provides an algebraic lift 
\[
    \sigma\colon S^1_R \to \mathcal{M}_{\operatorname{Dol}}(\alpha)
\]
of the loop $R e^{i\varphi}$ to the Dolbeault space for the Hitchin fibration $H$. 

\begin{prop}\label{prop:topological_reformulation}
 Assume that $\operatorname{dim}_{\mathbb{R}}\mathcal{M}_{\operatorname{Dol}}(\alpha) = 4$. Assume that for some $R\gg 1$ either 
\begin{enumerate}
    \item $S \circ \psi \circ \sigma$ induces an isomorphism 
    \[
    \mathbb{Z}\cong \pi_1 (S^1_R ) \to \pi_1 (\lvert\mathcal{D}\partial\mathcal{M}_B\rvert ) \cong \mathbb{Z} ;
    \]
    or 
    \item \label{item:regular_fibers}
    there exist regular values $z_1, z_2\in S^1$ of $H$ and $S$ respectively such that $H^{-1}(R z_1)$ and $S^{-1}(z_2)$ are both diffeomorphic to $T^2$ and that the inclusion map $H^{-1}(R z_1) \to \mathcal{M}_{\operatorname{Dol}}(\alpha)$ is homotopic to the inclusion map $S^{-1}(z_2)\to \mathcal{M}_{\operatorname{B}}(\textbf{c})$, where the two ambient spaces are identified via $\psi$. 
\end{enumerate}
    Then the diagram~\eqref{diagram:PW} is commutative up to homotopy. 
\end{prop}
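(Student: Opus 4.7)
The plan is to reduce the homotopy commutativity of~\eqref{diagram:PW} to a comparison of induced homomorphisms on first homology, using that $\lvert\mathcal{D}\partial\mathcal{M}_B\rvert\simeq S^1$ by Theorem~\ref{thm:secondary} is a $K(\mathbb{Z},1)$; so homotopy classes of maps into it from a CW-space $Z$ are in bijection with $\operatorname{Hom}(H_1(Z;\mathbb{Z}),\mathbb{Z})$. In particular, exhibiting a bottom arrow for~\eqref{diagram:PW} up to homotopy amounts to constructing a homomorphism $H_1(\mathcal{H}\setminus B_R(0))\to \mathbb{Z}$ whose pullback along $H_*$ agrees with $(S\circ\psi)_*$.

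First I would pin down the topology of the source: the assumption $\dim_{\mathbb{R}}\mathcal{M}_{\operatorname{Dol}}(\alpha)=4$ together with properness of $H$ imply that, for $R\gg 1$, the Hitchin fibration over $\mathcal{H}\setminus B_R(0)$ has only smooth fibers, and $\mathcal{M}_{\operatorname{Dol}}\setminus H^{-1}(B_R(0))$ deformation retracts onto $H^{-1}(S^1_R)$, a $T^2$-bundle over $S^1_R$. The Hitchin section $\sigma$ splits this bundle, producing a split short exact sequence
\[
   1\to \pi_1(T^2)\to \pi_1(H^{-1}(S^1_R))\to \pi_1(S^1_R)\to 1,
\]
and correspondingly the map $H_*\colon H_1(\mathcal{M}_{\operatorname{Dol}}\setminus H^{-1}(B_R(0)))\twoheadrightarrow H_1(S^1_R)\cong\mathbb{Z}$ is split surjective, with kernel generated by the class of a generic Hitchin fiber $T^2$. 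Hence the diagram is homotopy commutative iff $(S\circ\psi)_*$ vanishes on this fiber class; in that case, the bottom arrow is (up to homotopy) the unique map whose induced homomorphism on $H_1$ is $(S\circ\psi\circ\sigma)_*$.

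Under condition~(2), this fiber vanishing is immediate. The hypothesis provides a homotopy (inside $\mathcal{M}_B\cong\mathcal{M}_{\operatorname{Dol}}$ via $\psi$) between the inclusion of the Hitchin fiber $H^{-1}(Rz_1)\cong T^2$ and that of a Simpson fiber $S^{-1}(z_2)\cong T^2$. Post-composing with $S$ converts the latter into the constant map valued at $z_2\in\lvert\mathcal{D}\partial\mathcal{M}_B\rvert$, which is null-homotopic; hence the restriction of $S\circ\psi$ to the Hitchin fiber is null-homotopic, giving the required vanishing on $H_1$. The regularity of $z_1,z_2$ and the $T^2$ fiber hypothesis then ensure that the bottom map induced by $(S\circ\psi\circ\sigma)_*$ is a degree-$\pm 1$ self-map of $S^1$.

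Under condition~(1), $(S\circ\psi\circ\sigma)_*\colon \mathbb{Z}\to\mathbb{Z}$ is forced to be $\pm 1$, pinning down the candidate bottom map as a degree-$\pm 1$ map $S^1_R\to\lvert\mathcal{D}\partial\mathcal{M}_B\rvert$. The hard part, and the main obstacle, is to establish the fiber vanishing of $(S\circ\psi)_*$ without the explicit homotopy supplied by~(2). The plan is to exploit the compactness of the Hitchin fiber $\psi(H^{-1}(Rz_1))\subset \mathcal{M}_B$ together with continuity of $S$: a compact subset in the tubular neighbourhood of the boundary divisor that is sufficiently close to a single stratum must map into a contractible arc of $\lvert\mathcal{D}\partial\mathcal{M}_B\rvert$, so that the fiber image is null-homotopic. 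In the context of this paper, this qualitative statement is corroborated by the explicit asymptotic localization provided by the WKB control developed in Sections~\ref{sec:Dol}--\ref{sec:RH}, which furnishes the quantitative input needed to run the argument for large $R$.
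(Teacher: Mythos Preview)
Your overall framework---reducing to the $K(\mathbb{Z},1)$ property of $S^1$ and the split exact sequence of the $T^2$-bundle, so that homotopy commutativity becomes the statement that $(S\circ\psi)_*$ annihilates the fiber class in $H_1$---is exactly the mechanism the paper uses. The two cases, however, are handled differently than you propose.

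For condition~(2) your argument has a subtle gap. The hypothesis only supplies a homotopy of the two $T^2$-inclusions inside the \emph{full} moduli space $\mathcal{M}_{\operatorname{Dol}}(\alpha)\cong\mathcal{M}_B(\mathbf{c})$, not inside the neighbourhood of infinity on which $S$ is defined; you therefore cannot literally ``post-compose with $S$'' along the homotopy. The paper avoids this by passing to the Poincar\'e-dual picture on the boundary $3$-manifold $Y$: the bijection $[Y,S^1]\cong\operatorname{Hom}(H_1(Y,\mathbb{Z}),\mathbb{Z})$ sends $f$ to the functional $[a]\mapsto [a]\cap[f^{-1}(z)]$, and this algebraic intersection number, computed between a $1$-cycle and a $2$-cycle both living in $Y$, is invariant under homotopies of the $2$-cycle taking place in the ambient $4$-manifold. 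That is what makes the hypothesis in $\mathcal{M}_{\operatorname{Dol}}$ usable without extending~$S$.

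For condition~(1) the gap is genuine. You correctly identify that one still needs the fiber-vanishing, but your proposed route---``the image of the Hitchin fiber is compact and close to a single stratum, by the WKB control of Sections~\ref{sec:Dol}--\ref{sec:RH}''---is circular: Proposition~\ref{prop:topological_reformulation} is precisely the reduction step that \emph{precedes} the WKB analysis, and those later sections are devoted to verifying hypotheses~(1) and~(2), not the reduction itself. The paper instead invokes the purely topological argument of~\cite[Section~5]{NemSz}, which uses only that the moduli space is a $4$-manifold. In the case at hand one can also see fiber-vanishing directly: the monodromy of the $T^2$-fibration $Y\to S^1_R$ has finite order (it comes from $\zeta\mapsto\varepsilon\zeta$ on the spectral curve), so the image of $H_1(T^2)$ in $H_1(Y,\mathbb{Z})$ is torsion and any homomorphism to $\mathbb{Z}$ kills it automatically. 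Either route closes the gap; your compactness heuristic does not.
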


\begin{remark}
    Part~\eqref{item:regular_fibers} generalizes to higher dimension via a framed cobordism argument, see~\cite[Theorem~5.4.1]{MauMazStev}. 
\end{remark}

\begin{proof}
    Assuming the first condition, the argument of~\cite[Section~5]{NemSz} applies verbatim. 
    It only depends on the condition that the moduli space is a $4$-manifold, not on the rank nor the type of singularities of the underlying Higgs bundles. 
    
    Assuming the second condition, the proof is again similar. 
    Namely, the spaces $\mathcal{M}_{\operatorname{Dol}}(\alpha) \setminus H^{-1}(B_R(0))$ and $\mathcal{M}_{\operatorname{B}}(\textbf{c})\setminus \psi(H^{-1}(B_R(0)))$ deformation retract to their boundaries $Y_{\operatorname{Dol}}$ and $Y_B$ respectively. 
    These spaces are compact $3$-manifolds, that are $T^2$-fibrations over $S^1$. 
    The map $\psi$ is a homeomorphism between them, so we may refer to either of them by $Y$.
    Now, the set $[Y, S^1]$ of homotopy classes of maps $Y\to S^1$ is in bijection with $H^1(Y, \mathbb{Z})$ because $S^1 = K (\mathbb{Z}, 1)$. 
    On the other hand, by definition 
    \[
        H^1(Y, \mathbb{Z}) \cong \operatorname{Hom}(H_1(Y, \mathbb{Z}), \mathbb{Z}), 
    \]
    so we get 
    \[
        [Y, S^1] = \operatorname{Hom}(H_1(Y, \mathbb{Z}), \mathbb{Z})
    \]
    as sets. 
    This bijection is given as follows: for a map $f\colon Y\to S^1$, take a regular value $z\in S^1$, and then we associate to $[f]$ the map 
    \[
    [a]\in H_1(Y, \mathbb{Z}) \mapsto [a] \cap [f^{-1} (z)] \in \mathbb{Z}, 
    \]
    where $\cap$ stands for algebraic intersection number. 
    This finishes the proof, because the algebraic intersection number is homotopy-invariant. 
\end{proof}

Notice that in this proof we do not even need algebraicity (or even analyticity) of $\sigma$. 
In view of Proposition~\ref{prop:topological_reformulation}, the proof of Theorem~\ref{thm:main} reduces to the either of the following. 

\begin{prop}\label{prop:main}
The map $S \circ \psi \circ \sigma$ induces an isomorphism 
    \[
    \mathbb{Z}\cong \pi_1 (S^1_R ) \to \pi_1 (\lvert\mathcal{D}\partial\mathcal{M}_B\rvert ) \cong \mathbb{Z} 
    \]
    for $R\gg 1$. 
\end{prop}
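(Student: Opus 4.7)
The strategy is to compute the image loop of $S\circ\psi\circ\sigma\colon S^1_R\to|\mathcal{D}\partial\mathcal{M}_B|\simeq S^1$ using the trace coordinates of Section~\ref{sec:trace_coordinates} and to determine its degree. Exploiting the cyclic companion form of $A_t, B_t$ in~\eqref{multi}, the products $AB^{-1}$, $A^{-1}B$ and $[A,B]=ABA^{-1}B^{-1}$ all become diagonal matrices in the cyclic frame, so that
\[
x_7=\sum_{j=1}^3\frac{A_j}{B_j},\qquad x_8=\sum_{j=1}^3\frac{B_j}{A_j},\qquad x_9=\frac{A_3B_2}{A_2B_3}+\frac{A_1B_3}{A_3B_1}+\frac{A_2B_1}{A_1B_2}.
\]
Substituting the WKB asymptotics of Proposition~\ref{prop:asymptotic_monodromy} and writing $\gamma_j(\varphi):=\operatorname{Re}\bigl(e^{i\varphi/3}\varepsilon^{j-1}(\pi_0-\pi_1)\bigr)$, each summand becomes a bounded unit-modulus phase (coming from the $u_j$-holonomies) multiplied by $\exp\bigl(R^{1/3}\cdot(\text{linear in }\gamma_j)\bigr)$. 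The identity $\gamma_1+\gamma_2+\gamma_3\equiv 0$ then forces the leading exponent of $x_9$, equal to $\gamma_{\max}-\gamma_{\min}$, to dominate the leading exponents $\gamma_{\max}$ of $x_7$ and $-\gamma_{\min}$ of $x_8$, so that $[x_7:x_8:x_9]\to[0:0:1]$, i.e.\ the loop asymptotically concentrates around the node of the compactifying cubic~\eqref{divisor}.

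The second step is to track the lifted position of the loop on the minimal resolution. Blowing up the node produces an exceptional $\mathbb{P}^1\cong E$ meeting the strict transform $\tilde{C}$ at two points $p_1, p_2$ corresponding to the tangent lines $\{x_8=0\}$ and $\{x_7=0\}$. The lifted position $[x_7:x_8]\in E$ is governed by $|x_7/x_8|\approx\exp(-R^{1/3}\gamma_{\mathrm{mid}}(\varphi))$, so the loop sweeps between neighbourhoods of $p_1$ and $p_2$ each time $\gamma_{\mathrm{mid}}$ changes sign. The Simpson retraction $S$, specified by normalising the weights $-\log|x_7/x_9|$ and $-\log|x_8/x_9|$, maps each such passage to a traversal along one of the two edges of the bigon $|\mathcal{D}\partial\mathcal{M}_B|$. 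Partitioning $[0,2\pi]$ into the sectors delimited by the Stokes directions of Lemma~\ref{lem:noncritical} and the auxiliary rays where two terms of $x_9$ tie in modulus, identifying the dominant term in each sector, and using the cyclic shift $j\mapsto j+1$ of the diagonalising frame induced by $\varphi\mapsto\varphi+2\pi$ (observed after~\eqref{frame}), one obtains, under Assumption~\ref{assn:Stokes}, a piecewise description of the loop that is continuous across all critical rays and whose total count of edge-traversals around the bigon equals $\pm 1$.

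The hard part will be the analysis at the critical rays. On the Stokes directions of Lemma~\ref{lem:noncritical} the parallel transport formula of Proposition~\ref{prop:asymptotic_monodromy} degenerates, and separately two of the three terms of $x_9$ can coincide on auxiliary rays intrinsic to the trace coordinate itself, so that both the dominant term and its accompanying phase can jump. Without Assumption~\ref{assn:Stokes} one is left with an ambiguity of Stokes-multiplier type large enough to a priori obscure the winding. The assumption is precisely what removes this ambiguity and allows the step-by-step bookkeeping of exponential magnitudes (for the retraction's barycentric weights) and of $u_j$-holonomy phases (for the cyclic rotation) to combine into a single unambiguous degree, which I expect to equal~$\pm 1$.
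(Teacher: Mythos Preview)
Your framework is close to the paper's, but there is a genuine gap in how you obtain degree $\pm 1$.

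First, your model of Simpson's map as normalising the pair $\bigl(-\log|x_7/x_9|,\,-\log|x_8/x_9|\bigr)$ only produces a map to a $1$-simplex, not to the bigon $|\mathcal{D}\partial\mathcal{M}_B|\simeq S^1$. After blowing up the node one has two divisor components $C_1$ (the proper transform) and $C_2=E$ meeting in $p_1,p_2$; the dual complex has \emph{two} vertices, and distinguishing the two edges requires knowing whether the point is close to $C_1$ or to $C_2$, information that the pair of moduli $|x_7/x_9|,|x_8/x_9|$ alone does not record. The paper builds $S$ from a partition of unity subordinate to three open sets $N_0,N_1,N_2$ (a tube around $C_1$ and small balls around $p_1,p_2$), which is what encodes this extra bit.

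Second, and more importantly, the mechanism you invoke for nontrivial winding does not work. On the Hitchin section $u_j\equiv 0$, so $x_7,x_8,x_9$ are sums of positive real exponentials; hence $[x_7:x_8]\in\mathbb{R}_{>0}\subset\mathbb{P}^1$ for all $\varphi$, and the lifted point on $E$ merely oscillates between neighbourhoods of $p_1$ and $p_2$ without ever circling $E$. The cyclic shift of the frame under $\varphi\mapsto\varphi+2\pi$ is irrelevant here: $x_7,x_8,x_9$ are symmetric in $j$, so they are strictly $2\pi$-periodic in $\varphi$ and the shift contributes no winding. What actually produces degree $\pm 1$ in the paper is an \emph{asymmetry between the two critical angles} that your sketch misses. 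At $\varphi=\pi$ (ray $R_3$), two terms of $x_9$ tie and their common exponent coincides with those of $x_7,x_8$, so $|x_7/x_9|,|x_8/x_9|$ stay bounded below by $\tfrac12$: the point escapes the node altogether and limits to a smooth point of $C_1$, landing at the vertex $V_1=(1,0,0)$. At $\varphi=2\pi$ (ray $R_9$), by contrast, $x_9$ has a single dominant term strictly larger than those of $x_7,x_8$, so $[x_7:x_8:x_9]\to[0:0:1]$ but with $|x_7/x_8|\to 1$: the point limits to the midpoint $[1:1]$ of $E=C_2$, away from $N_0$. Thus one passage from $p_2$ to $p_1$ goes via $V_1$ and the other via $V_2$, and this is what forces the image loop to traverse the bigon once. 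Assumption~\ref{assn:Stokes} is used precisely to justify these two distinct limiting behaviours on the Stokes rays. Your argument needs to incorporate this dichotomy; without it, the bookkeeping you describe is consistent with degree $0$.
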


Notice that the $S^1$ on the left-hand side does have a natural orientation coming from the complex structure of the Hitchin base, but the one on the right hand side does not have a preferred orientation. 

\begin{prop}\label{prop:main2}
There exist regular values $z_1, z_2\in S^1$ of $H$ and $S$ respectively such that $H^{-1}(R z_1)$ and $S^{-1}(z_2)$ are both diffeomorphic to $T^2$ and that the inclusion map $H^{-1}(R z_1) \to \mathcal{M}_{\operatorname{Dol}}(\alpha)$ is homotopic to the inclusion map $S^{-1}(z_2)\to \mathcal{M}_{\operatorname{B}}(\textbf{c})$
\end{prop}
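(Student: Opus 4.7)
The plan is to check each of the three assertions (regular torus fibers on both sides, and homotopy equivalence of inclusions under $\psi$) in turn, leveraging the asymptotic analysis developed in Sections~\ref{sec:Dol}--\ref{sec:monodromy}. For the Dolbeault side, I would take any $z_1 \in S^1$, say $z_1 = 1$. Since the Hitchin fibration $H$ is an elliptic fibration (the generic fiber over $t \neq 0$ is $\operatorname{Pic}^3(\widetilde{\Sigma}_t)$, which is a torsor over the Jacobian of the smooth genus-$1$ spectral curve $\widetilde{\Sigma}_t$, see Section~2.4), every $R z_1$ with $R \gg 1$ is a regular value, and $H^{-1}(R z_1) \cong T^2$.

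For the Betti side, I would first resolve the node of the nodal cubic $D_B \subset \mathbb{C} P^2$ by a single blow-up, obtaining an snc compactifying divisor $\tilde{D}_B = D_1 \cup D_2$ consisting of two smooth rational curves meeting transversally at two points. The corresponding nerve complex is then a 1-cycle, still of homotopy type $S^1$, in agreement with Lemma~\ref{lem:compactifying_curve}. A tubular neighborhood of $\tilde D_B$ retracts onto a closed 3-manifold $Y_B$, and the Simpson map $S$ restricts to a circle-valued map $Y_B \to S^1$ that is a $T^2$-fibration over the interiors of the two edges of the nerve. Choosing $z_2$ in the interior of an edge gives a regular value with $S^{-1}(z_2) \cong T^2$.

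To match the two fibers under $\psi$, I would use Proposition~\ref{prop:asymptotic_monodromy} to track the image of a Hitchin fiber. For fixed $R \gg 1$ and fixed $\varphi \notin \pi\mathbb{Z}$, as the spectral line bundle $\mathcal{L}$ varies over $\operatorname{Pic}^3(\widetilde{\Sigma}_{R,\varphi})$, the moduli of the entries $A_j(t), B_j(t)$ of the monodromies are controlled by the real parts of the period integrals $\operatorname{Re}(e^{i\varphi/3}\varepsilon^{j-1}\pi_P)$ and are therefore fixed; only the phases vary, through the harmonic holonomies $\int_{\eta_P}(u_j - u_{j'}) + \int_{\gamma_P} u_j$, which are precisely the abelian $\operatorname{U}(1)$-holonomies of $\mathcal{L}$ recalled in~\eqref{eq:holonomy}. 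By the abelian NAHC, as $\mathcal{L}$ ranges over $\operatorname{Pic}^3(\widetilde{\Sigma}_{R,\varphi}) \cong T^2$ these phases parameterize a $2$-torus faithfully. Consequently the image $\psi(H^{-1}(R z_1))$ is a topologically embedded $T^2$ sitting at exponentially large values of the trace coordinates $x_7, x_8$, hence inside the tubular neighborhood of $\tilde D_B$ that retracts to $Y_B$.

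The last step is to identify the homotopy class of this torus inside $Y_B$ with that of a generic fiber of $S$. Locally near a smooth point of $\tilde D_B$, the Simpson map is modeled by $(u,v)\mapsto \log|u|/(\log|u|+\log|v|)$ in snc coordinates where $\tilde D_B = \{uv = 0\}$, and a fiber of $S$ is the $T^2$ swept out by the phases $(\arg u, \arg v)$ at fixed small $|u|,|v|$. Matching this with the image $2$-torus reduces to the claim that the two varying phases coming from the abelian holonomies of $\mathcal{L}$ correspond (up to an invertible integer change of basis on $\pi_1(T^2)$) to the two phase directions $(\arg u, \arg v)$ in the local snc picture. I expect this to be the main obstacle: it requires reading off from the explicit asymptotic formulas of Proposition~\ref{prop:asymptotic_monodromy} and the trace-coordinate relations of Section~\ref{sec:trace_coordinates} that the $T^2$-factor generated by $\mathcal{L}\mapsto$ (phases of $x_7, x_8$) is indeed a fiber class of $S|_{Y_B}$, rather than a section or some other $2$-torus isotopy class in $Y_B$. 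Once this local model identification is established, the inclusion $H^{-1}(Rz_1)\hookrightarrow \mathcal{M}_{\operatorname{Dol}}(\alpha)$ becomes homotopic under $\psi$ to $S^{-1}(z_2)\hookrightarrow \mathcal{M}_B(\mathbf c)$, concluding the proof.
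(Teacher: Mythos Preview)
Your outline is essentially the same strategy the paper uses, and you have correctly isolated the crucial step. Two remarks, one minor and one substantive.

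First, your initial choice $z_1=1$ corresponds to $\varphi=0$, which is a critical angle of the first kind (Lemma~\ref{lem:noncritical}), so the WKB asymptotics of Proposition~\ref{prop:asymptotic_monodromy} do not apply there. You correct this later by writing ``fixed $\varphi\notin\pi\mathbb{Z}$'', but the two choices should agree; the paper takes $z_1$ in the open sector $S_4$.

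Second, the step you flag as ``the main obstacle'' is exactly where the content of the paper's proof lies, and your proposal stops short of carrying it out. What is needed is not just that the phases vary faithfully over $\operatorname{Pic}^3(\widetilde\Sigma)\cong T^2$, but that the map to the \emph{correct} pair of angular coordinates has degree $\pm 1$. After the blow-up, the local affine coordinates near the relevant intersection point $P_1\in C_1\cap C_2$ are $X/Z$ and $X/Y$ (not $x_7,x_8$ separately), and a fiber of $S$ is the torus $\{|X/Z|=\varepsilon_1,\ |X/Y|=\varepsilon_2\}$. The paper (i) constructs explicit $1$-cycles $A,B$ on $\widetilde\Sigma$ out of lifts of the segment $[r,1-r]$ and of $\gamma_0,\gamma_1$, and proves they form a $\mathbb{Z}$-basis of $H_1(\widetilde\Sigma,\mathbb{Z})$ (Lemma~\ref{lem:basis}); (ii) reads off from the dominant terms in~\eqref{coordinates} that on $S_4$ one has $\arg(X/Z)\approx\operatorname{hol}_A(\mathcal{L})$ and $\arg(X/Y)\approx\operatorname{hol}_{A+B}(\mathcal{L})$ (Proposition~\ref{prop:angles}). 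Since $(A,A+B)$ is again a $\mathbb{Z}$-basis, the composite $\operatorname{Jac}(\widetilde\Sigma)\to T^2$ given by these two angles is a diffeomorphism, which is exactly the ``invertible integer change of basis'' you anticipated. Without this explicit identification of the holonomy cycles with a basis of $H_1$, the argument is incomplete: a priori the phase map could land in a sub-torus or have higher degree, and then the two inclusions would not be homotopic.
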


We devote the rest of this section to proving these assertions. 
Namely, we will prove Proposition~\ref{prop:main} conditional to an extra Assumption~\ref{assn:Stokes}. 
We also show Proposition~\ref{prop:main2} unconditionally.

\subsection{Asymptotics of the trace coordinates}

\subsubsection{Trace coordinates}

From the shape of the above determined matrices $A_t$ and $B_t$, it trivially follows that six out of the nine coordinates from~\eqref{trace} are zero. 
\begin{prop}
The only nonzero trace coordinates are $\operatorname{tr}(A_tB_t^{-1})$, $\operatorname{tr}(A_t^{-1}B_t)$ and $\operatorname{tr}(A_tB_tA_t^{-1}B_t^{-1})$.
\end{prop}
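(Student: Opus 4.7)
The plan is to exploit a common structural form of the two monodromy matrices. As observed in Section~\ref{sec:monodromy}, both $A_t$ and $B_t$ have the shape of the cyclic permutation matrix $T$ composed with a diagonal factor: setting
\[
D_A = \operatorname{diag}(A_1(t), A_2(t), A_3(t)), \qquad D_B = \operatorname{diag}(B_1(t), B_2(t), B_3(t)),
\]
one checks directly that $A_t = T D_A$ and $B_t = T D_B$, hence also $A_t^{-1} = D_A^{-1} T^{-1}$ and $B_t^{-1} = D_B^{-1} T^{-1}$.

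Next I would observe that conjugation by $T$ preserves the space of diagonal matrices: $T^{-1} D T$ is again diagonal, obtained from $D$ by a cyclic permutation of its entries. Applying this inductively, every word $w$ in the letters $\{A_t, B_t, A_t^{-1}, B_t^{-1}\}$ admits a unique factorization
\[
w = T^{k(w)} \cdot D(w),
\]
where $D(w)$ is diagonal and $k(w) \in \mathbb{Z}$ is the total exponent sum of the word $w$, that is the image of $w$ under the abelianization map $F_2 = \langle a,b \rangle \to \mathbb{Z}$ sending both generators to $1$.

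Now $T$ and $T^2$ are nontrivial cyclic permutation matrices, both with zero diagonal, whereas $T^3 = \operatorname{id}$. Therefore $\operatorname{tr}(T^{k(w)} D(w)) = 0$ whenever $k(w) \not\equiv 0 \pmod{3}$, regardless of the entries of $D(w)$; and if $k(w) \equiv 0 \pmod{3}$ then $w = D(w)$ is diagonal and its trace is generically nonzero.

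It remains to tabulate $k(w)$ for each of the nine words in~\eqref{trace}: $k = 1$ for $x_1, x_2$; $k = 2$ for $x_3$; $k = -1$ for $x_4, x_5$; $k = -2$ for $x_6$; and $k = 0$ for $x_7 = \operatorname{tr}(A_t B_t^{-1})$, $x_8 = \operatorname{tr}(A_t^{-1} B_t)$, $x_9 = \operatorname{tr}(A_t B_t A_t^{-1} B_t^{-1})$. Only the last three satisfy $k \equiv 0 \pmod{3}$, yielding the claim. The argument is essentially a bookkeeping exercise once the factorization $w = T^{k(w)} D(w)$ is in place; the main conceptual point, rather than a real obstacle, is the identification of the $\mathbb{Z}/3$ obstruction coming from the cyclic permutation factor $T$ that pervades both monodromies.
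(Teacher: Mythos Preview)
Your argument is correct and is precisely the computation the paper has in mind when it writes ``From the shape of the above determined matrices $A_t$ and $B_t$, it trivially follows that six out of the nine coordinates from~\eqref{trace} are zero.'' The paper gives no further detail; your factorization $w = T^{k(w)} D(w)$ and the observation that $T, T^2$ have vanishing diagonal make the claim transparent.
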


For convenience, instead of $x_7, x_8, x_9$ appearing in Section~\ref{sec:char} we introduce the notations 
\begin{equation*}
    \begin{gathered}
        X_{R,\varphi}=\operatorname{tr}(A_{R,\varphi}B_{R,\varphi}^{-1})=\operatorname{tr}(A_tB_t^{-1})
        \\ Y_{R,\varphi}=\operatorname{tr}(A_{R,\varphi}^{-1}B_{R,\varphi})=\operatorname{tr}(A_t^{-1}B_t)
        \\ Z_{R,\varphi}=\operatorname{tr}(A_{R,\varphi}B_{R,\varphi}A_{R,\varphi}^{-1}B_{R,\varphi}^{-1})=\operatorname{tr}(A_tB_tA_t^{-1}B_t^{-1})
    \end{gathered}
\end{equation*}
(These coordinates are not to be confused with $X,Y,Z$ appearing  in Section~\ref{sec:char}, as those will no longer be used in the sequel.) 
We will refer to $X,Y,Z$ as trace coordinates. 
\begin{prop}\label{prop:analiticity}
$X_{R,\varphi}, Y_{R,\varphi}, Z_{R,\varphi}$ depend $\mathbb{R}$-analytically with $R,\varphi$. 
\end{prop}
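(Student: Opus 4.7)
The plan is to factor the dependence of the trace coordinates on $(R,\varphi)$ through the chain of constructions that produces them, and argue that each link in the chain is real-analytic. Specifically, I would argue: (i) the parametrization $(R,\varphi)\mapsto t=Re^{i\varphi}$ is a real-analytic diffeomorphism $\mathbb{R}_+\times S^1\to\mathbb{C}^\times$; (ii) the Hitchin section $t\mapsto(\mathcal{E}_0,\theta_t)$ with $\theta_t$ given by~\eqref{companion} varies holomorphically in $t$; (iii) the Hermitian--Einstein metric $h_\tau$, and hence the integrable connection $\nabla_\tau$ from~\eqref{connection}, depends real-analytically on $(R,\varphi)$; (iv) the monodromy matrices $A_t,B_t$ of $\nabla_\tau$ along the fixed loops $\eta_P\ast\gamma_P\ast\eta_P^{-1}$ depend real-analytically on $(R,\varphi)$, since parallel transport along a fixed smooth path is the solution of a linear ODE whose coefficients vary real-analytically with the parameters; (v) the trace coordinates $X_{R,\varphi},Y_{R,\varphi},Z_{R,\varphi}$ are polynomial in the entries of $A_t,B_t$ and their inverses (the latter being real-analytic on $\operatorname{GL}(3,\mathbb{C})$), hence real-analytic in $(R,\varphi)$.

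The hard part will be step (iii). The standard route is to fix a reference Hermitian metric $h_0$ and write the unknown as $h_\tau=h_0\cdot s_\tau$ for a positive self-adjoint $s_\tau\in\operatorname{End}(E)$, so that Hitchin's equation becomes an elliptic PDE
\begin{equation*}
F_{h_0\cdot s}+|\tau|^2[\theta_1,\theta_1^{\dag_{h_0\cdot s}}]=0
\end{equation*}
for $s$. In a weighted Sobolev space adapted to the parabolic structure at $D$ (as in~\cite{Simp3},~\cite{Biq},~\cite{Konno}), the left-hand side is a real-analytic operator in $(R,\varphi,s)$, and its linearization in $s$ at the unique solution $s_\tau$ is an invertible elliptic operator thanks to parabolic $\alpha$-stability of $(\mathcal{E}_0,\tau\theta_1)$. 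The analytic implicit function theorem for maps of Banach manifolds then yields real-analytic dependence of $s_\tau$ on $(R,\varphi)$, and thus of $h_\tau$ and $\nabla_\tau$ as well.

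A more conceptual variant I could pursue instead is to observe that $\psi\circ\sigma=\operatorname{RH}\circ\operatorname{NAHC}\circ\sigma$ is a real-analytic map $\mathbb{R}_+\times S^1\to\mathcal{M}_{\operatorname{B}}(\textbf{c})$: the NAHC is real-analytic as a map of real-analytic moduli manifolds (essentially the same analytic input as in step (iii)), RH is complex bianalytic by Theorem~\ref{naht_RH}(\ref{RH}), and $\sigma$ is algebraic; the trace coordinates are regular functions on $\mathcal{M}_{\operatorname{B}}(\textbf{c})$, so their pullbacks are real-analytic. Either way, the only non-trivial input is the real-analytic dependence of the Hermitian--Einstein metric on the real parameters.
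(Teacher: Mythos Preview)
Your proposal is correct, and in fact your ``more conceptual variant'' is precisely the paper's proof: the paper simply asserts that $\psi=\operatorname{RH}\circ\operatorname{NAHC}$ is $\mathbb{R}$-analytic (citing the $\mathbb{R}$-analyticity of NAHC and the $\mathbb{C}$-analyticity of RH as known), that the Hitchin section $\sigma$ is $\mathbb{C}$-algebraic, and that the trace coordinates are algebraic on the character variety, then concludes. Your detailed route through steps (i)--(v) and the implicit function theorem is a valid unpacking of what the paper takes as a black box, but the paper itself does not carry out that analysis.
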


\begin{proof}
It is known that $\psi$ is a $\mathbb{R}$-analytic map because it is a composition of the $\mathbb{R}$-analytic map $\operatorname{NAHC}$ and the $\mathbb{C}$-analytic map $\operatorname{RH}$. 
Moreover, the Hitchin section $\sigma$ is $\mathbb{C}$-algebraic, in particular $\mathbb{C}$-analytic too. 
Finally, the trace coordinates are clearly $\mathbb{C}$-algebraic too on the character variety, and we conclude. 
\end{proof}

\begin{prop}
We have the approximations for the three trace coordinates: 
\begin{equation}
    \begin{gathered}
        \label{coordinates}
        X_{R,\varphi} \approx\exp\left( R^{1/3}\operatorname{Re}(e^{i\varphi/3}(\pi_0-\pi_1)) + \int_{\eta_0 - \eta_1} u_3 - u_1 + \int_{\gamma_0 - \gamma_1} u_3 \right) + \\
        +\exp\left( R^{1/3}\operatorname{Re}(e^{i\varphi/3}\varepsilon(\pi_0-\pi_1)) + \int_{\eta_0 - \eta_1} u_1 - u_2 + \int_{\gamma_0 - \gamma_1} u_1 \right) + 
        \\ + \exp\left( R^{1/3}\operatorname{Re}(e^{i\varphi/3}\varepsilon^2(\pi_0-\pi_1)) + \int_{\eta_0 - \eta_1} u_2 - u_3 + \int_{\gamma_0 - \gamma_1} u_2 \right) , 
        \\ Y_{R,\varphi} \approx\exp\left( R^{1/3}\operatorname{Re}(e^{i\varphi/3}(\pi_1-\pi_0)) - \int_{\eta_0 - \eta_1} u_3 - u_1 - \int_{\gamma_0 - \gamma_1} u_3 \right) + \\ 
        +\exp\left( R^{1/3}\operatorname{Re}(e^{i\varphi/3}\varepsilon(\pi_1-\pi_0)) - \int_{\eta_0 - \eta_1} u_1 - u_2 - \int_{\gamma_0 - \gamma_1} u_1 \right) +
        \\ + \exp\left( R^{1/3}\operatorname{Re}(e^{i\varphi/3}\varepsilon^2(\pi_1-\pi_0)) - \int_{\eta_0 - \eta_1} u_2 - u_3 - \int_{\gamma_0 - \gamma_1} u_2 \right) , 
        \\ Z_{R,\varphi} \approx\exp\left( R^{1/3}\operatorname{Re}(e^{i\varphi/3}(1-\varepsilon)(\pi_0-\pi_1)) + \int_{\eta_0 - \eta_1} 2 u_3 - u_1 - u_2 + \int_{\gamma_0 - \gamma_1} u_3  - u_2 \right) +
        \\ + \exp\left( R^{1/3}\operatorname{Re}(e^{i\varphi/3}(1-\varepsilon)\varepsilon(\pi_0-\pi_1)) + \int_{\eta_0 - \eta_1} 2 u_1 - u_2 - u_3 + \int_{\gamma_0 - \gamma_1} u_1  - u_3  \right) + \\ +
         \exp\left( R^{1/3}\operatorname{Re}(e^{i\varphi/3}(1-\varepsilon)\varepsilon^2(\pi_0-\pi_1)) + \int_{\eta_0 - \eta_1} 2 u_2 - u_3 - u_1 + \int_{\gamma_0 - \gamma_1} u_2  - u_1 \right)
    \end{gathered}
\end{equation}
\end{prop}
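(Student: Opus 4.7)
The plan is to compute the three traces by direct matrix multiplication, using the companion-like shape of $A_t$ and $B_t$, and then substitute the asymptotic expressions from Proposition~\ref{prop:asymptotic_monodromy}. First I would observe that a matrix of the form $\begin{pmatrix} 0 & 0 & m_3 \\ m_1 & 0 & 0 \\ 0 & m_2 & 0 \end{pmatrix}$ acts on the standard basis as the cyclic permutation $e_1 \mapsto m_1 e_2$, $e_2 \mapsto m_2 e_3$, $e_3 \mapsto m_3 e_1$, so its inverse is twisted by $T^{-1}$ with reciprocal entries.

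Since both $A_t$ and $B_t$ are twists of $T$, the permutation cancels in $A_tB_t^{-1}$ and $A_t^{-1}B_t$, yielding diagonal matrices; and the products $A_tB_t$ and $A_t^{-1}B_t^{-1}$ are each twists of $T^2$, so the commutator is again diagonal. A direct computation then gives
\[
\operatorname{tr}(A_tB_t^{-1}) = \frac{A_1}{B_1} + \frac{A_2}{B_2} + \frac{A_3}{B_3}, \qquad
\operatorname{tr}(A_t^{-1}B_t) = \frac{B_1}{A_1} + \frac{B_2}{A_2} + \frac{B_3}{A_3},
\]
\[
\operatorname{tr}(A_tB_tA_t^{-1}B_t^{-1}) = \frac{A_3B_2}{A_2B_3} + \frac{A_1B_3}{A_3B_1} + \frac{A_2B_1}{A_1B_2}.
\]

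Next I would substitute $A_i \approx \exp\bigl(R^{1/3}\operatorname{Re}(e^{i\varphi/3}\varepsilon^{i-1}\pi_0) + \int_{\eta_0}(u_i - u_{i+1}) + \int_{\gamma_0} u_i\bigr)$ and the analogous formula for $B_i$, with indices read cyclically modulo $3$. In each ratio the $R^{1/3}$-piece collapses to $R^{1/3}\operatorname{Re}\bigl(e^{i\varphi/3}\varepsilon^{k}(\pi_0 - \pi_1)\bigr)$ for an appropriate $k$, while the remaining pieces combine into integrals of signed combinations of the $u_j$ along the concatenations $\eta_0 - \eta_1$ and $\gamma_0 - \gamma_1$. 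The overall factor $(1-\varepsilon)$ visible in the exponents of $Z_{R,\varphi}$ arises from the differences $\varepsilon^{i-1} - \varepsilon^i = \varepsilon^{i-1}(1-\varepsilon)$ that enter when one forms the ratios $A_{i+1}/A_i$ and $B_j/B_{j+1}$. The relation $\approx$ is preserved under finite sums and products of bounded exponentials, so these estimates pass to the trace.

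I expect no analytic difficulty beyond what is already encoded in Proposition~\ref{prop:asymptotic_monodromy}; the main obstacle is combinatorial, namely keeping the cyclic relabeling of indices consistent across the nine ratios so that each resulting exponential is correctly paired with the appropriate sheet of the triple cover and the power of $\varepsilon$ dictated by the stated formulas. A secondary subtlety concerns the convention for the cube root $\tau = t^{1/3}$: the three branches differ by multiplication by $\varepsilon$, so an ambiguity $\varphi/3 \mapsto \varphi/3 + 2\pi/3$ could cyclically shift the labels of the exponents, and care is needed to match the labelling fixed in Proposition~\ref{prop:asymptotic_monodromy}.
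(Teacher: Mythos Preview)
Your approach is exactly the paper's: the authors' proof is the single sentence ``Straightforward matrix multiplications using Proposition~\ref{prop:asymptotic_monodromy},'' and you have simply spelled out those multiplications, correctly identifying that $A_tB_t^{-1}$, $A_t^{-1}B_t$ and the commutator are all diagonal in the given frame and reading off their traces as the indicated sums of ratios $A_i/B_i$, $B_i/A_i$, $A_{i+1}B_i/A_iB_{i+1}$. The substitution step and your remarks on cyclic bookkeeping and the branch of $t^{1/3}$ are accurate caveats but do not go beyond what the one-line proof intends.
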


\begin{proof}
Straightforward matrix multiplications using Proposition~\ref{prop:asymptotic_monodromy}. 
\end{proof}

\begin{remark}\label{rem:argument_period}
\begin{enumerate}
    \item We have 
    \[
    \pi_0-\pi_1 = 2(\varepsilon-1) \int_0^1\frac{\operatorname{d}\! t}{t^{2/3}(t-1)^{2/3}} \neq 0 
    \]
    because the integrand is positive. In addition, we have 
    \[
    \operatorname{arg} (\pi_0-\pi_1) = \operatorname{arg} (\varepsilon-1 ) = \frac{5\pi}{6}. 
    \] 
    \item The terms $\int_{\eta_0}u_{i}-u_{i+1}$, and $\int_{\gamma_0}u_i$ $i=1,2,3$ are purely imaginary, thus they have no contribution to the absolute values of the coordinates. 
    On the other hand, they affect the behaviour of the phases of the coordinates of Betti space. 
\end{enumerate}
\end{remark}

\subsubsection{Maximal dilation exponents of the trace coordinates}

According to Proposition~\ref{prop:topological_reformulation} there remains to answer the question: where does the Riemann--Hilbert correspondence map the loop $S^1_R$, as $R\rightarrow\infty$? 

For fixed $1\ll R$, and $\varphi$ ranging over $[0,2\pi]$ the coordinates $X_{R,\varphi}$, $Y_{R,\varphi}$, $Z_{R,\varphi}$ describe the image of the above mentioned loop $t = Re^{i\varphi}$ in the Hitchin base~\eqref{eq:Hitchin_base} under the Riemann--Hilbert map. To prove the statement of the Geometric P=W conjecture, we next need to investigate the growth order of the coordinates $X_{R,\varphi}$, $Y_{R,\varphi}$, $Z_{R,\varphi}$ with respect to each other.

Introducing the notation 
\begin{equation}\label{eq:x}
    x=e^{i\varphi/3}(\pi_0-\pi_1)=a+b\sqrt{-1}, 
\end{equation} 
we have 
\begin{equation}\label{eq:Real_parts}
    \begin{gathered}
        \operatorname{Re}(x)=a,\hspace{0.4cm} \operatorname{Re}(\varepsilon x)=-\frac{1}{2}a-\frac{\sqrt{3}}{2}b,\hspace{0.4cm} \operatorname{Re}(\varepsilon^2x)=-\frac{1}{2}a+\frac{\sqrt{3}}{2}b
        \\ \operatorname{Re}(-x)=-a,\hspace{0.4cm} \operatorname{Re}(-\varepsilon x)=\frac{1}{2}a+\frac{\sqrt{3}}{2}b,\hspace{0.4cm} \operatorname{Re}(-\varepsilon^2x)=-\frac{1}{2}a-\frac{\sqrt{3}}{2}b
        \\ \operatorname{Re}((1-\varepsilon)x)=\frac{3}{2}a+\frac{\sqrt{3}}{2}b,\hspace{0.2cm} \operatorname{Re}((1-\varepsilon)\varepsilon x)=-\sqrt{3}b,\hspace{0.2cm} \operatorname{Re}((1-\varepsilon)\varepsilon^2x)=-\frac{3}{2}a+\frac{\sqrt{3}}{2}b
    \end{gathered}
\end{equation}
Consider $\lvert X_{R,\varphi}\rvert$ from \eqref{coordinates}. It consists of three exponential terms. We are interested in the dominance properties among these terms as $R\rightarrow\infty$, and similarly for $\lvert Y_{R,\varphi}\rvert, \lvert Z_{R,\varphi}\rvert$. 
Recall from Lemma~\ref{lem:noncritical} that the critical angles of the first kind are integer multiples of $\pi$. 
\begin{prop}\label{prop:dilation}
Fix any $\varphi\in (0, \pi ) \cup (\pi, 2\pi ) $. As $R\to\infty$, we have 
\begin{align*}
   \frac 1R \ln \lvert X_{R,\varphi}\rvert & \to \max (\operatorname{Re}(x) , \operatorname{Re}(\varepsilon x), \operatorname{Re}(\varepsilon^2x)), \\
   \frac 1R \ln \lvert Y_{R,\varphi}\rvert & \to \max (\operatorname{Re}(-x) , \operatorname{Re}(-\varepsilon x) ,  \operatorname{Re}(-\varepsilon^2x)), \\
   \frac 1R \ln \lvert Z_{R,\varphi}\rvert & \to \max (\operatorname{Re}((1-\varepsilon)x) , \operatorname{Re}((1-\varepsilon)\varepsilon x) , \operatorname{Re}((1-\varepsilon)\varepsilon^2x)). 
\end{align*}
\end{prop}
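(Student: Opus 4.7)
The plan is to combine the explicit asymptotic expansions~\eqref{coordinates} with a dominant-term argument. Each of $X_{R,\varphi}, Y_{R,\varphi}, Z_{R,\varphi}$ is, up to an exponentially small multiplicative error encoded by $\approx$, a sum of three complex exponentials of the form
\begin{equation*}
    e^{R^{1/3} c_i(\varphi)} \cdot e^{\sqrt{-1}\,\theta_i(\varphi)},
\end{equation*}
where $c_i(\varphi)\in\mathbb{R}$ is one of the three real parts listed in~\eqref{eq:Real_parts} (for the appropriate coordinate) and $\theta_i(\varphi)\in\mathbb{R}$ is a bounded number obtained by integrating the purely imaginary forms $u_j$ along the paths $\eta_P,\gamma_P$, as pointed out in Remark~\ref{rem:argument_period}. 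The essential point is that the phase contributions $\theta_i$ are independent of $R$, whereas the $c_i$ enter multiplied by $R^{1/3}$; once this structure is observed, the modulus of each of $X,Y,Z$ is a sum of three rapidly varying but real-exponentially weighted unit complex numbers.

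The first step is the upper bound. By the triangle inequality together with the exponentially small $\approx$-correction,
\begin{equation*}
    |X_{R,\varphi}| \leq 3\exp\bigl(R^{1/3}\max_i c_i(\varphi)\bigr)\bigl(1+O(e^{-\epsilon R^{1/3}})\bigr),
\end{equation*}
so $\limsup_{R\to\infty} R^{-1/3}\ln|X_{R,\varphi}|\leq \max_i c_i(\varphi)$, and analogously for $Y,Z$. The second step is the lower bound in the generic case where the maximum is uniquely attained at some index $i_0$. Setting $\delta=c_{i_0}-\max_{i\neq i_0} c_i>0$, a reverse-triangle estimate yields
\begin{equation*}
    |X_{R,\varphi}|\geq \exp(R^{1/3}c_{i_0})\bigl(1-2e^{-R^{1/3}\delta}-O(e^{-\epsilon R^{1/3}})\bigr)=\exp(R^{1/3}c_{i_0})(1+o(1)),
\end{equation*}
which gives the matching lower bound. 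From~\eqref{eq:Real_parts} one sees that the set of $\varphi$ where two of $\operatorname{Re}(x),\operatorname{Re}(\varepsilon x),\operatorname{Re}(\varepsilon^2 x)$ coincide (and the analogues for $Y,Z$) is characterized by $\operatorname{arg}(x)\in \frac{\pi}{3}\mathbb{Z}$, hence consists of finitely many values in $(0,\pi)\cup(\pi,2\pi)$; on the complement of this finite set, the above argument finishes the proof.

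The main obstacle is the remaining finite set of \emph{Stokes directions of the second kind}, where two (or, for $\varphi\in\pi\mathbb{Z}$ which are excluded, all three) of the $c_i$ tie at the maximum. At such a $\varphi$ the leading asymptotics are governed by
\begin{equation*}
    \exp(R^{1/3}c_{i_0})\,\bigl|e^{\sqrt{-1}\,\theta_{i_0}}+e^{\sqrt{-1}\,\theta_{i_1}}\bigr|+(\text{subdominant terms}).
\end{equation*}
If the bracketed sum is nonzero, the stated limit persists; if it were to vanish exactly, the limit would drop to the next value of $c_i$. To close the argument I would either (i) compute the phase difference $\theta_{i_0}-\theta_{i_1}$ at Stokes angles explicitly — it is a sum of integrals of the $u_j$'s along $\eta_0-\eta_1$ and $\gamma_0-\gamma_1$ together with the discrete contribution coming from the cyclic permutation $T$ — and check that it does not equal $\pi\pmod{2\pi}$, or (ii) appeal to the $\mathbb{R}$-analyticity of the trace coordinates (Proposition~\ref{prop:analiticity}) and the fact that the claimed limit already holds on the dense open complement of the Stokes set, extending the equality by a semicontinuity argument (this is the route aligned with the later Assumption~\ref{assn:Stokes}). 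Either way, the generic dominant-term analysis above constitutes the bulk of the proof, while the careful treatment of the Stokes directions is the delicate part that motivates the detailed Stokes-phenomenon analysis carried out in the remainder of Section~\ref{sec:proof}.
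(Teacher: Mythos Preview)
Your approach coincides with the paper's: its entire proof is the single sentence ``This is a straightforward consequence of the formulas~\eqref{coordinates} and~\eqref{eq:Real_parts}'', and you have correctly unpacked that sentence via the triangle inequality for the upper bound and the reverse triangle inequality for the lower bound when the maximum is strict. (You have also tacitly corrected the normalisation: the exponent in~\eqref{coordinates} is $R^{1/3}$, so the statement should read $R^{-1/3}\ln|\cdot|$, as in your write-up.)

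You are in fact being more careful than the paper. The issue you flag at the second-kind critical angles $\varphi=\pi/2,\ 3\pi/2$ (the only ones in $(0,\pi)\cup(\pi,2\pi)$), where two of the three exponents $c_i$ tie at the maximum, is genuine, and the paper's one-line proof does not address it either. However, your proposed resolution (ii) does not work as stated: the $\mathbb{R}$-analyticity of $X_{R,\varphi},Y_{R,\varphi},Z_{R,\varphi}$ in $(R,\varphi)$ does not pass to the pointwise limit $R\to\infty$, and the right-hand side is a maximum of linear forms in $(a,b)$ and hence is not analytic in $\varphi$, so no continuation/semicontinuity argument of that shape can close the gap; nor is this what Assumption~\ref{assn:Stokes} is about (that assumption concerns validity of~\eqref{coordinates} at the first-kind angles $\varphi\in\pi\mathbb{Z}$, whereas at $\varphi=\pi/2,3\pi/2$ the formulas~\eqref{coordinates} are already established). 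Resolution (i) cannot be made uniform either, since the phase difference $\theta_{i_0}-\theta_{i_1}$ is a holonomy of the spectral line bundle $\mathcal{L}$ and will equal $\pi$ for some $\mathcal{L}$ in the Hitchin fibre. In practice this is harmless for the paper: the only downstream use of the exact limiting value is Proposition~\ref{prop:dominance}, which is explicitly stated only for $x$ in an \emph{open} sector $S_j$, where the maximum is strict and your argument goes through unconditionally.
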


\begin{proof}
This is a straightforward consequence of the formulas~\eqref{coordinates} and~\eqref{eq:Real_parts}. 
\end{proof}

\begin{remark}
    The proposition indicates that the affine coordinates of~\eqref{eq:nodal_curve} near its node, namely the quotients $X/Z$ and $Y/Z$, obey the rules of the tropical (i.e., max-plus) semi-ring. 
    See~\cite[Conjecture~1]{SzSz}. 
\end{remark}

\subsubsection{Sectorial decomposition}
Proposition~\ref{prop:dilation} leads us to introduce 12 open sectors in the complex plane parameterized by $x = a + b\sqrt{-1}$, determined by the terms that achieve the respective maxima, and defined as follows (see also Figure~\ref{figure:sectors}):
\begin{equation*}
    S_j=\left\{x\in\mathbb{C}\colon \quad \frac{\pi(j-1)}{6}<\operatorname{arg}(x)<\frac{\pi j}{6}\right\}, \hspace{0.3cm} j=1,2,\ldots ,12
\end{equation*}
Notice that by~\eqref{eq:x} and Remark~\ref{rem:argument_period}, one has 
\begin{equation}\label{eq:argx}
        \operatorname{arg}(x) = \frac{\varphi}3 + \frac{5\pi}{6}. 
\end{equation}
% Since the second additive constant on the right hand side only amounts to a shift by an integer multiple of $\pi/6$, its effect is a cyclic permutation of the sectors, and does not essentially modify validity of our arguments. 
% For the rest of the discussion, we therefore replace $\operatorname{arg}(x)$ by $\frac{\varphi}3$ and omit the shift by $5\pi/6$. 
As we see on Figure~\ref{figure:sectors}, the dividing lines between two neighboring sectors are the lines $b=\pm\sqrt{3}a$, $b=\pm\frac{1}{\sqrt{3}}a$ and the two axes. 
Define $R_j$ to be the ray between sectors $S_j$ and $S_{j+1}$ (where $j+1$ is understood modulo 12). We will refer to these as the \emph{Stokes rays}, and the angles $\varphi$ defining them as \emph{critical angles of the second kind}. 
Namely, for the $j$'th critical angle of the second kind $\varphi_j$ we have 
\[
    \frac{\varphi_j}3 = \frac{\pi j}{6}, 
\]
equivalently 
\[
    \varphi_j= \frac{\pi j}{2}.  
\]
Crucially, any critical of the first kind (see~\eqref{eq:critical_angles}) is also critical of the second kind. 
From now on, \emph{critical} will stand for critical of the second kind. 

\subsubsection{Dominance of terms within the trace coordinates}
Since our aim is to analyze the asymptotic behaviour of the coordinates, we will make use of the usual concept of dominance: we say that function $F(R)$ dominates the function $G(R)$ if and only if $\frac{\lvert G(R)\rvert}{\lvert F(R)\rvert}\rightarrow 0$ as $R\rightarrow\infty$. 
We denote this relation by $F(R) \gg G(R)$. 
Crucially, by Theorem~\ref{thm:dilation_spectrum}, all such convergences to follow are exponentially suppressed with $R^{1/3}$.%, and therefore provide convergent asymptotic expansions. 

\begin{prop}\label{prop:dominance}
  Fix a non-critical $\varphi\in [0, 2\pi ]$, so that $x$ belongs to one of the open sectors $S_j$. 
  The dominant term of the coordinates $X,Y,Z$ in each of the sectors $S_j$ is given by the following table, where the notation $\approx$ implies convergent asymptotic expansions. 
\end{prop}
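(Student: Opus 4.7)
The plan is to combine the explicit formulas~\eqref{eq:Real_parts} with the asymptotic expansions~\eqref{coordinates} to identify, in each open sector $S_j$, the single dominant exponential term among the three in each of $X_{R,\varphi}, Y_{R,\varphi}, Z_{R,\varphi}$. The first step is to isolate the contribution of each exponential to the absolute value: by Remark~\ref{rem:argument_period}, the correction integrals $\int_{\eta_0-\eta_1}(u_i - u_{i+1})$ and $\int_{\gamma_0 - \gamma_1} u_i$ are purely imaginary, hence affect only the phase and not the magnitude. The dominance question in each trace coordinate therefore reduces to ordering the three real numbers $R^{1/3}\operatorname{Re}(\varepsilon^{k-1} x)$ (for $X$), $-R^{1/3}\operatorname{Re}(\varepsilon^{k-1} x)$ (for $Y$), and $R^{1/3}\operatorname{Re}((1-\varepsilon)\varepsilon^{k-1} x)$ (for $Z$), indexed by $k=1,2,3$.

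Next I would verify the consistency of the sectorial decomposition and carry out the case analysis. Writing $\theta = \operatorname{arg}(x) = \varphi/3 + 5\pi/6$ as in~\eqref{eq:argx}, the three real parts relevant for $X$ (resp.\ $Y$) change their ordering precisely at $\theta \in \pi\mathbb{Z}/3$, since the phases of $\varepsilon^{k-1}$ are equally spaced by $2\pi/3$. For $Z$, the extra factor $1-\varepsilon = \sqrt{3}\, e^{-i\pi/6}$ shifts the critical angles by $\pi/6$, giving $\theta \in \pi/6 + \pi\mathbb{Z}/3$. The union of the two families is precisely $\theta \in \pi\mathbb{Z}/6$, matching exactly the boundaries of the twelve sectors $S_j$. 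Taking the midpoint $\theta_j = (2j-1)\pi/12$ of $S_j$ and plugging into~\eqref{eq:Real_parts}, one simply reads off the unique largest entry of each triple; this fills in the table row by row.

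Finally I would justify the ``$\approx$'' claim. Within any open sector $S_j$, the gap between the maximum and second-largest real part in each triple is bounded below by a positive constant $c_j > 0$, so each subdominant term is smaller than the dominant one by a factor of $\exp(-c_j R^{1/3})$. Combined with the exponential suppression of the error already captured in~\eqref{coordinates} via Proposition~\ref{prop:asymptotic_monodromy}, this yields equality up to a multiplicative $1 + O(e^{-\epsilon R^{1/3}})$, which is precisely the meaning of $\approx$ introduced at the start of Section~\ref{sec:RH}. The bulk of the work is the twelvefold case analysis: it is routine but requires careful bookkeeping of which signs of $a$ and $b$ in~\eqref{eq:x} prevail in each $S_j$, and of course the degeneration on the Stokes rays $R_j$ (where two real parts coincide) is excluded by hypothesis. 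The main subtlety, rather than any conceptual obstacle, is keeping track of the imaginary parts of the exponent so that the dominant term is reported together with the correct phase factor expressed via the holonomy integrals of the unitary connection on the spectral line bundle.
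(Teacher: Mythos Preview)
Your proposal is correct and follows essentially the same approach as the paper: the paper offers no separate proof for this proposition, presenting the table as a direct read-off from the formulas~\eqref{coordinates} and~\eqref{eq:Real_parts} (in the spirit of Proposition~\ref{prop:dilation}), and your argument is simply a careful elaboration of why that read-off is legitimate. Your observation that the critical angles for the $X,Y$-triples lie in $\pi\mathbb{Z}/3$ while those for the $Z$-triple are shifted by $\pi/6$ via the phase of $1-\varepsilon=\sqrt{3}\,e^{-i\pi/6}$, together yielding exactly the sector boundaries $\pi\mathbb{Z}/6$, is a nice structural explanation that the paper leaves implicit.
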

\begin{center}
\begin{tabular}{c | c} 
 \hline
 Sector & Dominant term of the trace coordinates \\  
 \hline
 $S_1$ & $\lvert X_{R,\varphi}\rvert\approx\exp(R^{1/3}\operatorname{Re}(x))=\exp(R^{1/3}a)$ \\ & $\lvert Y_{R,\varphi}\rvert\approx\exp(R^{1/3}\operatorname{Re}(-\varepsilon x))=\exp(R^{1/3}(\frac{1}{2}a+\frac{\sqrt{3}}{2}b))$ \\ & $\lvert Z_{R,\varphi}\rvert\approx\exp(R^{1/3}\operatorname{Re}((1-\varepsilon) x))=\exp(R^{1/3}(\frac{3}{2}a+\frac{\sqrt{3}}{2}b))$ \\ 
 \hline
 $S_2$ & $\lvert X_{R,\varphi}\rvert\approx\exp(R^{1/3}\operatorname{Re}(x))=\exp(R^{1/3}a)$ \\ & $\lvert Y_{R,\varphi}\rvert\approx\exp(R^{1/3}\operatorname{Re}(-\varepsilon x))=\exp(R^{1/3}(\frac{1}{2}a+\frac{\sqrt{3}}{2}b))$ \\ & $\lvert Z_{R,\varphi}\rvert\approx\exp(R^{1/3}\operatorname{Re}((1-\varepsilon) x))=\exp(R^{1/3}(\frac{3}{2}a+\frac{\sqrt{3}}{2}b))$ \\ 
 \hline
 $S_3$ & $\lvert X_{R,\varphi}\rvert\approx\exp(R^{1/3}\operatorname{Re}(\varepsilon^2x))=\exp(R^{1/3}(-\frac{1}{2}a+\frac{\sqrt{3}}{2}b))$ \\ & $\lvert Y_{R,\varphi}\rvert\approx\exp(R^{1/3}\operatorname{Re}(-\varepsilon x))=\exp(R^{1/3}(\frac{1}{2}a+\frac{\sqrt{3}}{2}b))$ \\ & $\lvert Z_{R,\varphi}\rvert\approx\exp(R^{1/3}\operatorname{Re}((1-\varepsilon) x))=\exp(R^{1/3}(\frac{3}{2}a+\frac{\sqrt{3}}{2}b))$ \\ 
 \hline
 $S_4$ & $\lvert X_{R,\varphi}\rvert\approx\exp(R^{1/3}\operatorname{Re}(\varepsilon^2x))=\exp(R^{1/3}(-\frac{1}{2}a+\frac{\sqrt{3}}{2}b))$ \\ & $\lvert Y_{R,\varphi}\rvert\approx\exp(R^{1/3}\operatorname{Re}(-\varepsilon x))=\exp(R^{1/3}(\frac{1}{2}a+\frac{\sqrt{3}}{2}b))$ \\ & $\lvert Z_{R,\varphi}\rvert\approx\exp(R^{1/3}\operatorname{Re}((1-\varepsilon)\varepsilon^2 x))=\exp(R^{1/3}(-\frac{3}{2}a+\frac{\sqrt{3}}{2}b))$ \\
 \hline
 $S_5$ & $\lvert X_{R,\varphi}\rvert\approx\exp(R^{1/3}\operatorname{Re}(\varepsilon^2x))=\exp(R^{1/3}(-\frac{1}{2}a+\frac{\sqrt{3}}{2}b))$ \\ & $\lvert Y_{R,\varphi}\rvert\approx\exp(R^{1/3}\operatorname{Re}(- x))=\exp(R^{1/3}(-a))$ \\ & $\lvert Z_{R,\varphi}\rvert\approx\exp(R^{1/3}\operatorname{Re}((1-\varepsilon)\varepsilon^2 x))=\exp(R^{1/3}(-\frac{3}{2}a+\frac{\sqrt{3}}{2}b))$ \\  
 \hline
 $S_6$ & $\lvert X_{R,\varphi}\rvert\approx\exp(R^{1/3}\operatorname{Re}(\varepsilon^2x))=\exp(R^{1/3}(-\frac{1}{2}a+\frac{\sqrt{3}}{2}b))$ \\ & $\lvert Y_{R,\varphi}\rvert\approx\exp(R^{1/3}\operatorname{Re}(- x))=\exp(R^{1/3}(-a))$ \\ & $\lvert Z_{R,\varphi}\rvert\approx\exp(R^{1/3}\operatorname{Re}((1-\varepsilon)\varepsilon^2 x))=\exp(R^{1/3}(-\frac{3}{2}a+\frac{\sqrt{3}}{2}b))$ \\  
 \hline
 $S_7$ & $\lvert X_{R,\varphi}\rvert\approx\exp(R^{1/3}\operatorname{Re}(\varepsilon x))=\exp(R^{1/3}(-\frac{1}{2}a-\frac{\sqrt{3}}{2}b))$ \\ & $\lvert Y_{R,\varphi}\rvert\approx\exp(R^{1/3}\operatorname{Re}(- x))=\exp(R^{1/3}(-a))$ \\ & $\lvert Z_{R,\varphi}\rvert\approx\exp(R^{1/3}\operatorname{Re}((1-\varepsilon)\varepsilon^2 x))=\exp(R^{1/3}(-\frac{3}{2}a+\frac{\sqrt{3}}{2}b))$ \\  
 \hline
 $S_8$ & $\lvert X_{R,\varphi}\rvert\approx\exp(R^{1/3}\operatorname{Re}(\varepsilon x))=\exp(R^{1/3}(-\frac{1}{2}a-\frac{\sqrt{3}}{2}b))$ \\ & $\lvert Y_{R,\varphi}\rvert\approx\exp(R^{1/3}\operatorname{Re}(- x))=\exp(R^{1/3}(-a))$ \\ & $\lvert Z_{R,\varphi}\rvert\approx\exp(R^{1/3}\operatorname{Re}((1-\varepsilon)\varepsilon x))=\exp(R^{1/3}(-\sqrt{3}b))$ \\  
 \hline
 $S_9$ & $\lvert X_{R,\varphi}\rvert\approx\exp(R^{1/3}\operatorname{Re}(\varepsilon x))=\exp(R^{1/3}(-\frac{1}{2}a-\frac{\sqrt{3}}{2}b))$ \\ & $\lvert Y_{R,\varphi}\rvert\approx\exp(R^{1/3}\operatorname{Re}(-\varepsilon^2 x))=\exp(R^{1/3}(\frac{1}{2}a-\frac{\sqrt{3}}{2}b))$ \\ & $\lvert Z_{R,\varphi}\rvert\approx\exp(R^{1/3}\operatorname{Re}((1-\varepsilon)\varepsilon x))=\exp(R^{1/3}(-\sqrt{3}b))$ \\  
 \hline
 $S_{10}$ & $\lvert X_{R,\varphi}\rvert\approx\exp(R^{1/3}\operatorname{Re}(\varepsilon x))=\exp(R^{1/3}(-\frac{1}{2}a-\frac{\sqrt{3}}{2}b))$ \\ & $\lvert Y_{R,\varphi}\rvert\approx\exp(R^{1/3}\operatorname{Re}(-\varepsilon^2 x))=\exp(R^{1/3}(\frac{1}{2}a-\frac{\sqrt{3}}{2}b))$ \\ & $\lvert Z_{R,\varphi}\rvert\approx\exp(R^{1/3}\operatorname{Re}((1-\varepsilon)\varepsilon x))=\exp(R^{1/3}(-\sqrt{3}b))$ \\  
 \hline
 $S_{11}$ & $\lvert X_{R,\varphi}\rvert\approx\exp(R^{1/3}\operatorname{Re}( x))=\exp(R^{1/3}(a))$ \\ & $\lvert Y_{R,\varphi}\rvert\approx\exp(R^{1/3}\operatorname{Re}(-\varepsilon^2 x))=\exp(R^{1/3}(\frac{1}{2}a-\frac{\sqrt{3}}{2}b))$ \\ & $\lvert Z_{R,\varphi}\rvert\approx\exp(R^{1/3}\operatorname{Re}((1-\varepsilon)\varepsilon x))=\exp(R^{1/3}(-\sqrt{3}b))$ \\  
 \hline
 $S_{12}$ & $\lvert X_{R,\varphi}\rvert\approx\exp(R^{1/3}\operatorname{Re}( x))=\exp(R^{1/3}(a))$ \\ & $\lvert Y_{R,\varphi}\rvert\approx\exp(R^{1/3}\operatorname{Re}(-\varepsilon^2 x))=\exp(R^{1/3}(\frac{1}{2}a-\frac{\sqrt{3}}{2}b))$ \\ & $\lvert Z_{R,\varphi}\rvert\approx\exp(R^{1/3}\operatorname{Re}((1-\varepsilon) x))=\exp(R^{1/3}(\frac{3}{2}a+\frac{\sqrt{3}}{2}b))$ \\  
 \hline
\end{tabular}
\end{center}

\subsubsection{Dominance between different trace coordinates within sectors}

Next, we need to address the dominance properties among the three trace coordinates. 
\begin{lemma}\label{lem:dominance}
We fix some angle coordinate that is not critical of the first kind, $\varphi\notin \mathbb{Z} \cdot \pi$. 
\begin{itemize}
    \item[i)] $\lvert Z_{R,\varphi}\rvert\gg\lvert X_{R,\varphi}\rvert$ and $\lvert Z_{R,\varphi}\rvert\gg\lvert Y_{R,\varphi}\rvert$. 
    \item[ii)] On sectors $S_j$, $j=1,4,5,8,9,12$ $\lvert X_{R,\varphi}\rvert\gg\lvert Y_{R,\varphi}\rvert$, while on sectors $S_j$, $j=2,3,6, 7,10,11$ $\lvert Y_{R,\varphi}\rvert\gg\lvert X_{R,\varphi}\rvert$.
\end{itemize}
\end{lemma}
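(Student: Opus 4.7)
The claim is a direct real-analytic consequence of Proposition~\ref{prop:dominance}. My plan is as follows. That proposition provides, for every $\varphi$ lying in an open sector $S_j$, explicit real linear functions $M_X^{(j)}, M_Y^{(j)}, M_Z^{(j)}$ of $a=\operatorname{Re}(x)$ and $b=\operatorname{Im}(x)$ such that
\[
\frac{1}{R^{1/3}}\ln |X_{R,\varphi}| \to M_X^{(j)},\qquad \frac{1}{R^{1/3}}\ln |Y_{R,\varphi}| \to M_Y^{(j)},\qquad \frac{1}{R^{1/3}}\ln |Z_{R,\varphi}| \to M_Z^{(j)}.
\]
The relation $F \gg G$ is thus equivalent to strict inequality of the corresponding leading real exponents, with the gap producing a ratio $|G|/|F| = O(\exp(-(M-M')R^{1/3}))$ that tends to $0$ exponentially by Theorem~\ref{thm:dilation_spectrum}.

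For part (i), I will verify directly from the table of Proposition~\ref{prop:dominance} that $M_Z^{(j)} - M_X^{(j)} > 0$ and $M_Z^{(j)} - M_Y^{(j)} > 0$ on each open sector. In every case the difference simplifies, after substituting $(a,b) = |x|(\cos\theta, \sin\theta)$ with $\theta=\arg(x)\in(\tfrac{(j-1)\pi}{6}, \tfrac{j\pi}{6})$, to an expression of the form $c\,|x|\cos(\theta-\theta_0)$ or $c\,|x|\sin(\theta-\theta_0)$ with constant $c>0$ and a phase $\theta_0$ so chosen that the trigonometric factor is strictly positive on $S_j$ and vanishes only on its two bounding Stokes rays. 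Conceptually, the underlying reason is the factorization $1-\varepsilon=\sqrt{3}\,e^{-i\pi/6}$, which shows that the three $Z$-exponents appearing in~\eqref{coordinates} live at the enhanced scale $\sqrt{3}|x|$ compared to the scale $|x|$ of the $X$- and $Y$-exponents, and one checks that the relevant cosines line up favorably in every sector (so the gap is not killed by the worst-case envelope bound $\sqrt{3}/2$).

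For part (ii), the same mechanism yields $M_X^{(j)} - M_Y^{(j)}$ as a sine or cosine of $\theta$ shifted by a multiple of $\pi/6$, and I only need to check by inspection of the table that this is positive on $S_1, S_4, S_5, S_8, S_9, S_{12}$ and negative on $S_2, S_3, S_6, S_7, S_{10}, S_{11}$. A consistency check is that the sign can only change at those Stokes rays of the second kind where either $M_X$ or $M_Y$ switches its dominant summand; the table shows that the sectors in each of the two collections are precisely those separated from their neighbors by such a sign flip. The main obstacle is purely organizational: twelve sectors times three comparisons gives thirty-six trigonometric inequalities, each individually routine. A minor subtlety is that on a Stokes ray of the second kind that is not of the form $\varphi\in\mathbb{Z}\pi$, the ``$\approx$'' of Proposition~\ref{prop:dominance} degenerates into a sum of two coincident exponentials, but the dominance statement still goes through unchanged since only the leading real exponent governs the limit of $|G|/|F|$.
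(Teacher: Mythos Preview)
Your proposal is correct and takes essentially the same approach as the paper: the paper's proof is precisely the sector-by-sector comparison of the dominant exponents from the table in Proposition~\ref{prop:dominance}, written out explicitly as twelve chains of inequalities in $a,b$. Your added remarks—the trigonometric parameterization and the observation that $|1-\varepsilon|=\sqrt{3}$ places the $Z$-exponents at a uniformly larger scale—are a pleasant conceptual gloss on the same computation, and your handling of the second-kind (but not first-kind) Stokes rays is a point the paper leaves implicit.
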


\begin{figure}[ht]
\centering
\includegraphics[width=7.0cm]{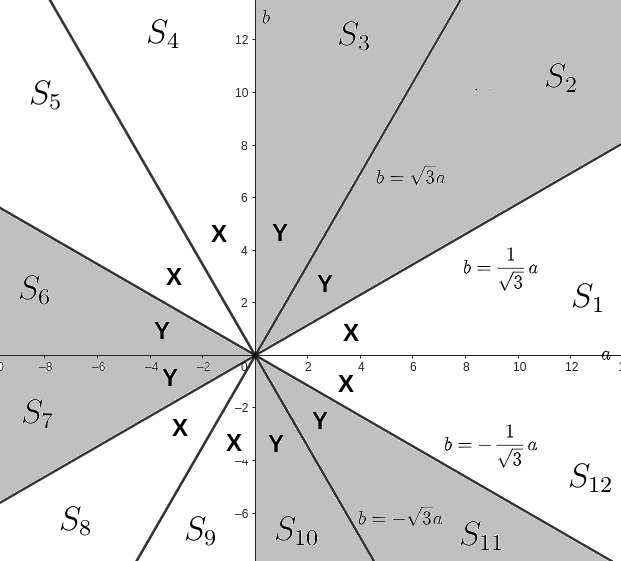}
\caption{The $\tau$ complex plane, sectors $S_j$, dominant coordinate $\lvert X_{R,\varphi}\rvert$ or $\lvert Y_{R,\varphi}\rvert$ depending on the sector.}
\label{figure:sectors}
\end{figure}

\begin{proof}
    A straightforward comparison of the exponents from the table in each sector shows this result. Namely: 
    \begin{enumerate}
        \item On $S_1$: $\frac{3}{2}a+\frac{\sqrt{3}}{2}b>a>\frac{1}{2}a+\frac{\sqrt{3}}{2}b$, that is $\lvert Z_{R,\varphi}\rvert \gg \lvert X_{R,\varphi}\rvert \gg \lvert Y_{R,\varphi}\rvert$.
        \item On $S_2$: $\frac{3}{2}a+\frac{\sqrt{3}}{2}b>\frac{1}{2}a+\frac{\sqrt{3}}{2}b>a$, that is $\lvert Z_{R,\varphi}\rvert \gg \lvert Y_{R,\varphi}\rvert \gg \lvert X_{R,\varphi}\rvert$. 
        \item   On $S_3$: $\frac{3}{2}a+\frac{\sqrt{3}}{2}b>\frac{1}{2}a+\frac{\sqrt{3}}{2}b>-\frac{1}{2}a+\frac{\sqrt{3}}{2}b$, that is $\lvert Z_{R,\varphi}\rvert \gg \lvert Y_{R,\varphi}\rvert \gg \lvert X_{R,\varphi}\rvert$.

    \item On $S_4$: $-\frac{3}{2}a+\frac{\sqrt{3}}{2}b>-\frac{1}{2}a+\frac{\sqrt{3}}{2}b>\frac{1}{2}a+\frac{\sqrt{3}}{2}b$, that is $\lvert Z_{R,\varphi}\rvert \gg \lvert X_{R,\varphi}\rvert \gg \lvert Y_{R,\varphi}\rvert$.

    \item On $S_5$: $-\frac{3}{2}a+\frac{\sqrt{3}}{2}b>-\frac{1}{2}a+\frac{\sqrt{3}}{2}b>-a$, that is $\lvert Z_{R,\varphi}\rvert \gg \lvert X_{R,\varphi}\rvert \gg \lvert Y_{R,\varphi}\rvert$.

    \item On $S_6$: $-\frac{3}{2}a+\frac{\sqrt{3}}{2}b>-a>-\frac{1}{2}a+\frac{\sqrt{3}}{2}b$, that is $\lvert Z_{R,\varphi}\rvert \gg \lvert Y_{R,\varphi}\rvert \gg \lvert X_{R,\varphi}\rvert$.

    \item On $S_7$: $-\frac{3}{2}a+\frac{\sqrt{3}}{2}b>-a>-\frac{1}{2}a-\frac{\sqrt{3}}{2}b$, that is $\lvert Z_{R,\varphi}\rvert \gg \lvert Y_{R,\varphi}\rvert \gg \lvert X_{R,\varphi}\rvert$.

    \item On $S_8$: $-\sqrt{3}b>-\frac{1}{2}a-\frac{\sqrt{3}}{2}b>-a$, that is $\lvert Z_{R,\varphi}\rvert \gg \lvert X_{R,\varphi}\rvert \gg \lvert Y_{R,\varphi}\rvert$.

    \item On $S_9$: $-\sqrt{3}b>-\frac{1}{2}a-\frac{\sqrt{3}}{2}b> \frac{1}{2}a-\frac{\sqrt{3}}{2}b$, that is $\lvert Z_{R,\varphi}\rvert \gg \lvert X_{R,\varphi}\rvert \gg \lvert Y_{R,\varphi}\rvert$.

    \item On $S_{10}$: $-\sqrt{3}b>\frac{1}{2}a-\frac{\sqrt{3}}{2}b> -\frac{1}{2}a-\frac{\sqrt{3}}{2}b$, that is $\lvert Z_{R,\varphi}\rvert \gg \lvert Y_{R,\varphi}\rvert \gg \lvert X_{R,\varphi}\rvert$.

    \item On $S_{11}$: $-\sqrt{3}b>\frac{1}{2}a-\frac{\sqrt{3}}{2}b>a$, that is $\lvert Z_{R,\varphi}\rvert \gg \lvert Y_{R,\varphi}\rvert \gg \lvert X_{R,\varphi}\rvert$.

    \item On $S_{12}$: $\frac{3}{2}a+\frac{\sqrt{3}}{2}b>a>\frac{1}{2}a-\frac{\sqrt{3}}{2}b$, that is $\lvert Z_{R,\varphi}\rvert \gg \lvert X_{R,\varphi}\rvert \gg \lvert Y_{R,\varphi}\rvert$.
\end{enumerate}
\end{proof}

We need to understand the behaviour of $X_{R,\varphi},Y_{R,\varphi},Z_{R,\varphi}$ as $1\ll R$ is fixed, and $\varphi$ ranges  over $[0,2\pi]$. Lemma~\ref{lem:dominance} shows that $\lvert Z_{R,\varphi}\rvert$ dominates the other two coordinates on each open sector.
\begin{prop}
As $t\rightarrow\infty$ in any of the sectors $S_j$, the image of $\psi$ converges to the point $[0:0:1]$. 
\end{prop}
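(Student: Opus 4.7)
The plan is to reduce the statement directly to Lemma~\ref{lem:dominance} together with the exponential nature of the dominance. First, I would recall that the compactifying divisor $C \subset \mathbb{C}P^2$ was computed in Lemma~\ref{lem:compactifying_curve} via the equation $-x_7x_8x_9+x_7^3+x_8^3=0$, where $[x_7:x_8:x_9]=[X_{R,\varphi}:Y_{R,\varphi}:Z_{R,\varphi}]$ are the homogeneous coordinates induced by the trace coordinates. A quick check confirms that the point $[0:0:1]$ lies on $C$, and indeed coincides with the nodal singularity identified in the proof of Lemma~\ref{lem:compactifying_curve} (upon setting $x_9=1$, the local equation becomes $x_7x_8\approx 0$). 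So the statement really says that, in each open sector, the Riemann--Hilbert image of the Hitchin circle tends to the node of $C$.

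Second, I would combine Proposition~\ref{prop:dominance} with Lemma~\ref{lem:dominance}. The inspection of the exponents collected in the table of Proposition~\ref{prop:dominance} shows that inside every open sector $S_j$ the strict inequalities
\[
    \operatorname{Re}((1-\varepsilon^k)x) > \operatorname{Re}(\varepsilon^{\ell}x), \qquad \operatorname{Re}((1-\varepsilon^k)x) > \operatorname{Re}(-\varepsilon^{m}x)
\]
hold for the indices $k,\ell,m$ realizing the respective maxima for $Z,X,Y$, because the three quantities compared differ by strictly positive real numbers (e.g.\ on $S_1$ one has $\tfrac{3}{2}a+\tfrac{\sqrt 3}{2}b > a > \tfrac{1}{2}a+\tfrac{\sqrt 3}{2}b$ with $a,b>0$). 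By Theorem~\ref{thm:dilation_spectrum}, the remainder terms in the asymptotic formulas~\eqref{coordinates} are exponentially suppressed of the form $O(e^{-\epsilon R^{1/3}})$, so the differences of the dominant real parts give an exponential lower bound
\[
    \frac{|X_{R,\varphi}|}{|Z_{R,\varphi}|} = O(e^{-\epsilon(\varphi) R^{1/3}}), \qquad \frac{|Y_{R,\varphi}|}{|Z_{R,\varphi}|} = O(e^{-\epsilon(\varphi) R^{1/3}}),
\]
where $\epsilon(\varphi)>0$ is locally uniform on each open sector $S_j$.

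Third, I would conclude by rewriting this in projective coordinates: one has
\[
    [X_{R,\varphi}:Y_{R,\varphi}:Z_{R,\varphi}] = \left[\frac{X_{R,\varphi}}{Z_{R,\varphi}}:\frac{Y_{R,\varphi}}{Z_{R,\varphi}}:1\right] \longrightarrow [0:0:1]
\]
as $R\to\infty$ with $\varphi$ ranging in $S_j$, the convergence being uniform on compact sub-arcs of $S_j$ by the exponential estimates. This gives the claim. No real obstacle is present at this step; the only subtle point is that the assertion says nothing about the Stokes rays themselves, on which the asymptotic expansions degenerate and one of the subdominant exponentials becomes comparable to the dominant one in $X$ or $Y$ (but never in $Z$, since the three exponents entering $Z$ differ by $\sqrt{3}$ rotations of $(1-\varepsilon)x$, whose real parts coincide only at angles not attained when $X$ or $Y$ changes its dominant term). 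Handling the Stokes directions is precisely the content of the next sections, and does not affect the open-sector convergence claimed here.
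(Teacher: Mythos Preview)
Your proof is correct and follows the same approach as the paper, which treats the proposition as an immediate consequence of Lemma~\ref{lem:dominance}: since $|Z_{R,\varphi}|\gg|X_{R,\varphi}|$ and $|Z_{R,\varphi}|\gg|Y_{R,\varphi}|$ on every open sector, the projective point $[X:Y:Z]$ converges to $[0:0:1]$. Your write-up is simply a more explicit unpacking of this.

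One minor inaccuracy in your closing parenthetical: it is not true that the dominant exponent of $Z$ is always uniquely attained on the Stokes rays. On the rays $R_3,R_7,R_{11}$ (corresponding to $\varphi=(2k-1)\pi$), two of the three terms in the expansion of $Z$ share the same maximal real exponent, which is why Proposition~\ref{prop:affine_coordinates} part~\ref{prop:affine_coordinates1} involves a phase factor $|e^{\sqrt{-1}\varpi}+e^{\sqrt{-1}\varpi'}|$ and only yields the inequality $\lim |X|/|Z|\geq 1/2$ rather than $0$. This does not affect the open-sector statement you are proving, but your aside mischaracterizes the boundary behaviour.
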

Notice that this limit point is actually the nodal point of the compactifying curve $C$ determined in Lemma~\ref{lem:compactifying_curve}, namely 
\begin{equation}\label{eq:nodal_curve}
    X^3+Y^3-XYZ=0. 
\end{equation}

\subsection{Proof of Proposition~\ref{prop:main2}}\label{sec:proof_main2}

\subsubsection{Blow-up of the nodal point}\label{sec:blowup}

Let us consider the blow up of the projective compactification of $\mathcal{M}_{\operatorname{B}}$ at the point $[0:0:1]$:
\[
    \widetilde{\mathcal{M}_{\operatorname{B}}} \to \overline{\mathcal{M}_{\operatorname{B}}}. 
\]
Locally around its nodal point $[0:0:1]$, the curve $C$ consists of two components, tangential to the $X$ and $Y$ axes respectively, in the local coordinate system $[X\colon Y\colon Z]$. 
Local affine coordinates near this point are given by $X/Z$ and $Y/Z$. 
Taking $\mathbb{C} P^1$ with homogeneous coordinates $[X'\colon Y']$, the equation of the blown up surface reads as 
\[
\frac XZ Y' - \frac YZ X' = 0.
\]
The inverse image of $(0,0)$ is the exceptional divisor, denoted by $C_2$. 
The proper transform of $C$ is denoted by $C_1$. 
Clearly, $C_1$ and $C_2$ have two intersection points. 
Denote the intersection point of $C_2$ with the proper transform of the $X$ axis by $P_1$, and the one with the $Y$ axis by $P_2$. 
The point $P_1$ is contained in the affine chart $U_1 = \{ Y' \neq 0 \}$.  
On $U_1$, the local affine coordinates are given by 
\[
    \frac XZ, \quad \frac XY. 
\]
Similarly, $P_2$ is contained in the affine chart $U_2 = \{ X' \neq 0 \}$, and local affine coordinates are given by 
\[
    \frac YZ, \quad \frac YX. 
\]

The component $C_i$ gives rise to the vertex $V_i$ in the dual graph, and the two intersection points $P_1, P_2$ correspond to edges $E_1$ and $E_2$ respectively. 
Then the dual boundary graph becomes as on Figure~\ref{figure:dual}. 
\begin{figure}[ht]
\centering
\includegraphics[width=10.0cm]{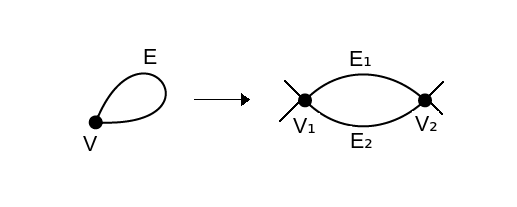}
\caption{The transformation of the graph of the dual boundary complex under the blow up. The vertex $V$ transforms into $V_1$, and $V_2$ corresponds to the appearing exceptional divisor.}
\label{figure:dual}
\end{figure}

\subsubsection{Simpson's map}\label{sec:Simpsons_map}

We are ready to define Simpson's natural map (see~\cite[Section~5,Equations~(41)--(42)]{SzSz2}, also called evaluation map in~\cite[Assumption~4.2.5]{MauMazStev})
\begin{equation}\label{eq:Simpson_map}
    S\colon  \mathcal{M}_{\operatorname{B}}\setminus \psi(h^{-1}(B_R(0)))\to \lvert\mathcal{D}\partial\mathcal{M}_B\rvert    
\end{equation}
as follows.
We consider the compactification $\widetilde{\mathcal{M}}_{\operatorname{B}}$ of $\mathcal{M}_{\operatorname{B}}$ defined in Section~\ref{sec:blowup}. 
Next, we construct a cover 
\begin{equation}\label{eq:cover}
    \mathcal{M}_{\operatorname{B}}\setminus \psi(h^{-1}(B_R(0))) = N_0^*\cup N_1^*\cup N_2^*
\end{equation}
of the range by connected open sets $N_0^*, N_1^*, N_2^*$ satisfying the properties
\begin{enumerate} 
    \item $N_i^*\cap N_j^*$ is non-empty and connected for every $0\leq i,j \leq 2$
    \item $N_0^* \cap N_1^* \cap N_2^* = \emptyset$. 
\end{enumerate}
See Figure~\ref{fig:Simpson}. 
Namely, let $N_0$ be a tubular neighborhood of $C_1$ in  $\widetilde{\mathcal{M}}_{\operatorname{B}}$ and $N_1, N_2$ be suitable neighborhoods of the intersection points $P_1, P_2$ of $C_1, C_2$ defined above. 
Let 
\[
    N_i^* = N_i\cap \mathcal{M}_{\operatorname{B}} = N_i \setminus (C_1 \cup C_2). 
\]
Let $\phi_i$ denote a partition of unity subordinate to the cover~\eqref{eq:cover}. 
We then define the map~\eqref{eq:Simpson_map} mapping $P$ to $(\phi_0 (P), \phi_1 (P), \phi_2 (P))$. 
The range of $S$ is naturally a subset of the unit cube $[0,1]^3$ with coordinates $(\phi_0, \phi_1, \phi_2 )$, and it is easy to see that the conditions imposed on~\eqref{eq:cover} imply that its image $\mathcal{D}\partial\mathcal{M}_B(\textbf{c})$ is the boundary of the standard $2$-simplex 
\[
    \phi_1 + \phi_2 + \phi_3 = 1. 
\]
Its edges are defined by the $\phi_i = 0$ plane sections. 

\subsubsection{Homology basis}

In this section, we will define a basis of $H_1(\widetilde{\Sigma}, \mathbb{Z})$. 
Consider the straight line segment $[r, 1-r]\subset \mathbb{R}$.
Let us temporarily denote its lifts to $\Sigma$ by $f_1, f_2, f_3$, where $f_j$ lies on the sheet $\Sigma_j$ determined by the $Q_j$-eigenspace of $\theta$ (see~\eqref{Q}).  
On the other hand, the three lifts of the loop $\gamma_0$ cyclically connect the three sheets. 
For instance, one of these lifts connects $\Sigma_1$ to $\Sigma_2$. 
Denote this lift by $\tilde{\gamma}_0$. 
Moreover, let $\tilde{\tilde{\gamma}}_0$ denote the lift of $\gamma_0$ connecting the sheet $\Sigma_2$ to $\Sigma_3$.
Similarly, one of the lifts $\tilde{\gamma}_1$ of $\gamma_1$ connects the sheet $\Sigma_1$ to $\Sigma_2$, and another one $\tilde{\tilde{\gamma}}_1$ connects $\Sigma_2$ to $\Sigma_3$. 
It follows that the paths $f_1 * \tilde{\gamma}_0 * (-f_2) * (-\tilde{\gamma}_1)$ and $f_2 * \tilde{\tilde{\gamma}}_0 * (-f_3) * (-\tilde{\tilde{\gamma}}_1)$ represent singular $1$-cycles on $\widetilde{\Sigma}$. 
Let us denote these cycles in order by $A,B$. 
See Figure~\ref{fig:closed_loop}. 

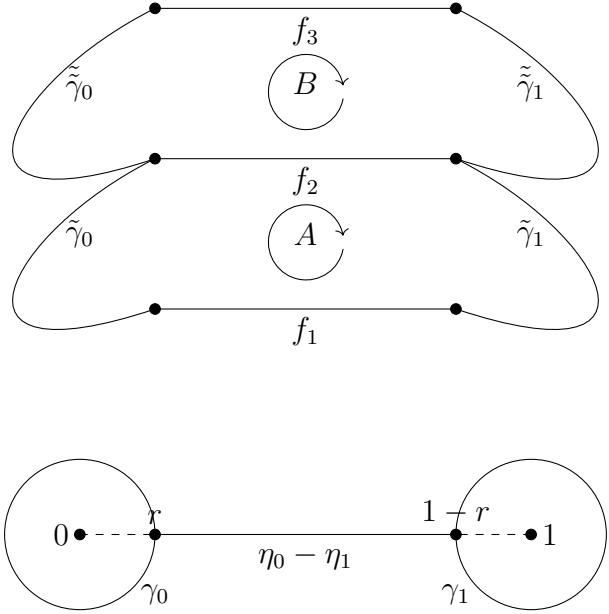
\begin{figure}
 \centering
 \begin{tikzpicture}
 \filldraw [black] (-3,-3) circle (2pt) node [anchor = east] {$0$};
 \filldraw [black] (-2,-3) circle (2pt) node [anchor = south] {$r$}; 
 \filldraw [black] (3,-3) circle (2pt) node [anchor = west] {$1$};
 \filldraw [black] (2,-3) circle (2pt) node [anchor = south] {$1-r$}; 
 \draw (0, -3.3) node {$\eta_0 - \eta_1$};
 \draw (-2, -3.8) node {$\gamma_0$};
 \draw (2, -3.8) node {$\gamma_1$};
 \draw [dashed] (-3,-3) -- (-2,-3);
 \draw (-2,-3) --  (2,-3);
 \draw (-3,-3) circle [radius = 1];
 \draw (3,-3) circle [radius = 1];
 \draw [dashed] (3,-3) -- (2,-3);
 \draw  (-2,0) --  (2,0);
 \draw (0, -.3) node {$f_1$};
 \filldraw [black] (-2,0) circle (2pt);
 \filldraw [black] (2,0) circle (2pt);  
 \draw  (-2,2) --  (2,2);
 \draw (0, 1.7) node {$f_2$};
 \filldraw [black] (-2,2) circle (2pt);
 \filldraw [black] (2,2) circle (2pt);  
 \draw  (-2,4) --  (2,4);
 \draw (0, 3.7) node {$f_3$};
 \filldraw [black] (-2,4) circle (2pt);
 \filldraw [black] (2,4) circle (2pt);  
 \draw (-2,0) .. controls  (-5,-1) and (-4,1) .. (-2,2);
 \draw (-3, 1) node {$\tilde{\gamma}_0$};
 \draw (2,0) .. controls  (5,-1) and (4,1) .. (2,2);
 \draw (3, 1) node {$\tilde{\gamma}_1$};
 \draw (-2,2) .. controls  (-5,1) and (-4,3) .. (-2,4);
 \draw (-3, 3) node {$\tilde{\tilde{\gamma}}_0$};
 \draw (2,2) .. controls  (5,1) and (4,3) .. (2,4);
 \draw (3, 3) node {$\tilde{\tilde{\gamma}}_1$};
 \draw (0,1) node {$A$};
 \draw [->] (.5,.8) arc [radius = .5, start angle = 350, end angle = 10];  
 \draw (0,3) node {$B$};
 \draw [->] (.5,2.8) arc [radius = .5, start angle = 350, end angle = 10];  
\end{tikzpicture}
 \caption{The cycles $A,B$ on the spectral curve.}
\label{fig:closed_loop}
\end{figure}  

\begin{lemma}\label{lem:basis}
The cycles $A,B$ form a $\mathbb{Z}$-basis of $H_1 (\widetilde{\Sigma}, \mathbb{Z})$. 
\end{lemma}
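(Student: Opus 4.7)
The plan is to verify $A\cdot B=\pm 1$ and then invoke unimodularity of the intersection pairing. Indeed, since $\widetilde{\Sigma}$ is a smooth genus-$1$ curve (as established by the Riemann--Hurwitz calculation in Section~2.3), $H_1(\widetilde{\Sigma},\mathbb{Z})\cong\mathbb{Z}^2$ carries a unimodular skew-symmetric intersection pairing, so a pair of classes forms a $\mathbb{Z}$-basis precisely when their intersection number is $\pm 1$.

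The first structural input is the $\mathbb{Z}/3\mathbb{Z}$-Galois symmetry of the cover $\widetilde{\Sigma}\to\mathbb{C}P^1$: let $\tau$ denote the generator that cyclically permutes the three sheets. By construction $\tau_*(A)=B$ and $\tau_*(B)=C$, where $C$ is the analogous cycle built from sheets $3$ and $1$. A direct manipulation of the formal sums gives $A+B+C=0$ in $H_1(\widetilde{\Sigma},\mathbb{Z})$: the three segments $f_1, f_2, f_3$ cancel pairwise, while the arcs near $P_0$ (resp.~$P_1$) combine into the total lift of the small loop $\gamma_0$ (resp.~$\gamma_1$), which bounds a small disk around the corresponding branch point in $\widetilde{\Sigma}$. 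Hence $\tau_*$ satisfies $\tau_*^2+\tau_*+\operatorname{id}=0$ on $H_1(\widetilde{\Sigma},\mathbb{Z})$, so this free group of rank $2$ acquires the structure of a module over the Eisenstein integers $\mathbb{Z}[\varepsilon]$; being torsion-free and finitely generated over the PID $\mathbb{Z}[\varepsilon]$, it is necessarily free of rank one. Writing $A=\lambda\eta$ for a $\mathbb{Z}[\varepsilon]$-generator $\eta$ and some $\lambda=p+q\varepsilon\in\mathbb{Z}[\varepsilon]$, one then has $B=\varepsilon\lambda\eta$, and an elementary matrix computation in the $\mathbb{Z}$-basis $\{\eta,\varepsilon\eta\}$ gives $A\cdot B=N(\lambda)\cdot(\eta\cdot\varepsilon\eta)$. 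In particular $\{A,B\}$ is a $\mathbb{Z}$-basis of $H_1(\widetilde{\Sigma},\mathbb{Z})$ if and only if $\lambda$ is a unit in $\mathbb{Z}[\varepsilon]$.

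The second ingredient is the explicit computation of $A\cdot B$. Choose smooth transverse representatives by pushing the $-f_2$ part of $A$ slightly into $\{\operatorname{Im}z>0\}$ on sheet~$2$, extending this perturbation consistently into the adjacent arcs of $A$ near the branch points. Since $A$ and $B$ use disjoint pairs of sheets away from $f_2$, the only possible transverse intersections of the perturbed cycles lie in small neighborhoods of $P_0$ and $P_1$. In the local coordinate $\zeta$ at $P_0$ (where $\zeta^3 = z(z-1)$), the three arcs lifting $\gamma_0$ are consecutive arcs of angular extent $2\pi/3$ on the circle $|\zeta|=r^{1/3}$, meeting pairwise at the three points over $z=r$; the perturbation moves the endpoint of $A$'s arc past the shared endpoint at $(r,2)$ and into the sector of $B$'s arc, producing exactly one transverse crossing. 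A symmetric analysis near $P_1$ gives the second local contribution.

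The main obstacle is the sign bookkeeping in this local intersection computation: one must verify that the two branch-point contributions combine to give $\pm 1$ rather than $0$ or $\pm 2$. The Galois structure from the previous paragraph serves as a powerful sanity check, as it forces $A\cdot B$ to be the norm $N(\lambda)\in\{0,1,3,4,7,\ldots\}$ of an Eisenstein integer, a priori excluding $|A\cdot B|=2$. An alternative verification bypassing the geometric sign analysis is furnished by the Riemann bilinear relation applied to the holomorphic $1$-form $\omega=\mathrm{d}z/\zeta^2$ on $\widetilde{\Sigma}$: computing $\int_A\omega=J(1-\varepsilon)$ and $\int_B\omega=\varepsilon J(1-\varepsilon)$ for an explicit hypergeometric period $J$, together with $\int_{\widetilde{\Sigma}}\omega\wedge\bar\omega$, the relation $\int_{\widetilde{\Sigma}}\omega\wedge\bar\omega=(A\cdot B)\cdot 2i\operatorname{Im}\bigl(\int_A\omega\,\overline{\int_B\omega}\bigr)$ pins down $A\cdot B=1$.
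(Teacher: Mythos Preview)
Your approach is genuinely different from the paper's and, once patched, is arguably more conceptual. The paper argues via an explicit triangulation of $T^2$ obtained by lifting the triangulation of $S^2$ with vertices $0,1,\infty$; it then identifies $f_1,f_2,f_3$ among the nine edges and reads off from the combinatorics that $A=f_1-f_2$ and $B=f_2-f_3$ generate $H_1$. You instead reduce to $|A\cdot B|=1$ via unimodularity, and the $\mathbb{Z}[\varepsilon]$-module structure you extract from the Galois action is a genuinely nice idea: it forces $|A\cdot B|=N(\lambda)\in\{0,1,3,4,\ldots\}$, so together with an upper bound $|A\cdot B|\le 2$ from geometry and the observation $\int_A\omega=(1-\varepsilon)J\neq 0$ (hence $\lambda\neq 0$), you immediately get $|A\cdot B|=1$. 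This is cleaner than the explicit triangulation once the pieces are in place.

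That said, several steps are loose. First, from $A+B+C=0$ you conclude $\tau_*^2+\tau_*+\mathrm{id}=0$ on all of $H_1$; as written this only shows the relation on the cyclic $\tau_*$-module generated by $A$. The quickest fix is the transfer argument: $H_1(\widetilde\Sigma,\mathbb{Q})^{\tau_*}\cong H_1(\mathbb{C}P^1,\mathbb{Q})=0$, so $\tau_*-1$ is invertible on $H_1\otimes\mathbb{Q}$ and the minimal polynomial of $\tau_*$ must be $x^2+x+1$. Second, your Riemann bilinear identity is written with $(A\cdot B)$ on the wrong side; the correct relation is $2i\,\mathrm{Im}\bigl(\int_A\omega\,\overline{\int_B\omega}\bigr)=(A\cdot B)\int_{\widetilde\Sigma}\omega\wedge\bar\omega$, and you never actually evaluate $\int_{\widetilde\Sigma}\omega\wedge\bar\omega$. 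This step is in any case unnecessary: the nonvanishing of $\int_A\omega$ already rules out $N(\lambda)=0$, which is all you need.

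The most substantive slip is the local intersection count. Your perturbation (pushing $-f_2$ into $\{\mathrm{Im}\,z>0\}$) is \emph{not} symmetric under $z\mapsto 1-z$, so the claim that ``a symmetric analysis near $P_1$ gives the second local contribution'' fails. In the local $\zeta$-coordinate at $\tilde P_0$ the four endpoints of $A'$ and $B$ on a small boundary circle appear in the cyclic order $A'\text{-in},\,B\text{-in},\,A'\text{-out},\,B\text{-out}$ (linked, one crossing), whereas at $\tilde P_1$ the same perturbation gives the order $A'\text{-out},\,A'\text{-in},\,B\text{-out},\,B\text{-in}$ (unlinked, zero crossings). So the correct geometric count is one crossing in total, not two. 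Fortunately your argument only needs the bound $|A\cdot B|\le 2$, so the error is harmless for the final conclusion, but the picture you paint of ``one crossing at each branch point, signs to be sorted out'' is not what actually happens.
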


\begin{proof}
This is likely standard, yet we include it for sake of completeness. 
Let us consider straight lines between the marked points $0,1,\infty \in \mathbb{C} P^1 = S^2$. 
They form a triangulation of $S^2$ with $3$ vertices, $3$ edges and $2$ faces.
Up to homeomorphism, $\widetilde{\Sigma}_t$ can be obtained by gluing three copies of such triangulated $S^2$, by identifying their points $0$ with each other, and similarly for their points $1$ and $\infty$. 
The segments $[0,1]$ in the three copies of $S^2$ are (extensions of) the lifts $f_1, f_2, f_3$. 
The resulting triangulation of $T^2$ 
\begin{itemize}
\item admits $3$ vertices $v_1, v_2, v_3$, $9$ edges $e_1, \ldots , e_9$ and $6$ faces 
\item has $6$ edges incident to every vertex,
\item has $3$ edges between each pair of distinct vertices, 
\item has three differently labelled vertices on each face. 
\end{itemize}
An example of such a triangulation is shown on Figure~\ref{fig:triangulation}.
Now, if we label vertices so that $v_1$ corresponds to the identification of the points $0\in S^2$ and $v_2$ to $1\in S^2$, then the three lifts $f_1, f_2, f_3$ correspond to the three edges of the triangulation connecting $v_1, v_2$. 
(On Figure~\ref{fig:triangulation}, these edges are labelled by $e_2, e_4, e_9$. 
The concrete bijection between $f_1, f_2, f_3$ and $e_2, e_4, e_9$ is irrelevant for the argument.) 
The loop $A$ is the concatenation of $f_1, f_2$, and $B$ is the concatenation of $f_2, f_3$ (given suitable orientations so that the concatenations make sense). 
Now, since $f_1$ and $f_3$ are not parallel (i.e., they do not form a bigon), the cycles $A,B$ are linearly independent over $\mathbb{Q}$, and generate $H_1 (\widetilde{\Sigma}, \mathbb{Z})$. 
\end{proof}

\begin{figure}
 \centering
 \begin{tikzpicture}
    \draw (-3,-3) -- (-3,3); 
    \draw (-3.3, -1.5) node {$e_1$};    
    \draw (-3.3, 1.5) node {$e_6$};    
    \draw (3,-3) -- (3,3); 
    \draw (3.3, -1.5) node {$e_1$};    
    \draw (3.3, 1.5) node {$e_6$};    
    \draw (-3,-3) -- (3,-3);
    \draw (-1.5, -3.3) node {$e_2$};    
    \draw (1.5, -3.3) node {$e_4$};    
    \draw (-3,3) -- (3,3);
    \draw (-1.5, 3.3) node {$e_2$};    
    \draw (1.5, 3.3) node {$e_4$}; 
    \filldraw [black] (-3,-3) circle (2pt) node [anchor = north] {$v_1$}; 
    \filldraw [black] (-3,3) circle (2pt) node [anchor = south] {$v_1$}; 
    \filldraw [black] (3,-3) circle (2pt) node [anchor = north] {$v_1$}; 
    \filldraw [black] (3,3) circle (2pt) node [anchor = south] {$v_1$}; 
    \filldraw [black] (0,3) circle (2pt) node [anchor = south] {$v_2$}; 
    \filldraw [black] (0,-3) circle (2pt) node [anchor = north] {$v_2$}; 
    \filldraw [black] (-3,0) circle (2pt) node [anchor = east] {$v_3$}; 
    \filldraw [black] (3,0) circle (2pt) node [anchor = west] {$v_3$}; 
    \draw (-3,0) -- (0,3);
    \draw (-1.2, 1.5) node {$e_8$};    
    \draw (-3,0) -- (0,-3);
    \draw (-1.2, -1.5) node {$e_3$};
    \draw (-3,0) -- (3,3);
    \draw (1.8, -1.5) node {$e_5$};    
    \draw (3,0) -- (0,-3);
    \draw (0, 1.2) node {$e_7$};    
    \draw (1.2, 0) node {$e_9$};    
    \draw (-3,0) -- (0,-3);
    \draw (0,-3) -- (3,3);
 \end{tikzpicture}
 \caption{A possible triangulation of the two-torus.}
\label{fig:triangulation}
\end{figure}
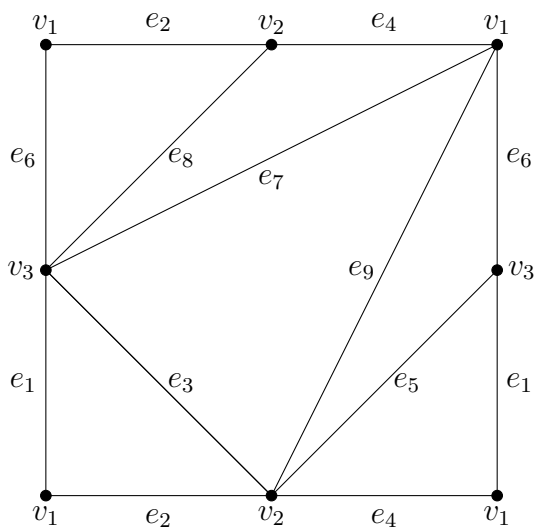

\subsubsection{Phases of the trace coordinates in terms of holonomy of the spectral line bundle}

In Lemma~\ref{lem:dominance}, we determined the dominance properties of the absolute values of the trace coordinates along rays in any open sector $S_j$. 
We have found that $X/Z \to 0$ and $Y/Z \to 0$ as $R\to\infty$. 
Moreover, in sectors $S_j$ with $j\equiv 2,3 \pmod{4}$ the limit in $\widetilde{\mathcal{M}_{\operatorname{B}}}$ is $P_2$ and in the remaining ones it is $P_1$. 
Here, we focus on the angles of the affine coordinates near these limit points found in Section~\ref{sec:blowup}. 
We find that these angles can be interpreted as the holonomy of $\mathcal{L}$ along some loops in $\widetilde{\Sigma}$, depending on the given sector; as a matter of fact, the computation on the other sectors is the same up to a permutation of the sheets. 
Due to the $2\pi / 3$ periodicity of the diagram of Figure~\ref{figure:sectors}, it is sufficient to deal with four consecutive sectors. 
In the sequel we will use the notation of the holonomy isomorphism~\eqref{eq:holonomy}. 
\begin{prop}\label{prop:angles}
\begin{enumerate}
    \item On $S_2$, we have 
    \[
        \frac{Y/X}{|Y/X|} \approx \operatorname{hol}_{B} (\mathcal{L}), \quad \frac{Y/Z}{|Y/Z|} \approx \operatorname{hol}_{2B} (\mathcal{L})
    \] 
    \item On $S_3$, we have 
    \[
        \frac{Y/X}{|Y/X|} \approx \operatorname{hol}_{-A-B} (\mathcal{L}), \quad \frac{Y/Z}{|Y/Z|} \approx \operatorname{hol}_{2B} (\mathcal{L})
    \]
    \item On $S_4$, we have 
    \[
        \frac{X/Z}{|X/Z|} \approx \operatorname{hol}_{A} (\mathcal{L}), \quad \frac{X/Y}{|X/Y|} \approx \operatorname{hol}_{A+B} (\mathcal{L})
    \]
    \item On $S_5$, we have 
    \[
        \frac{X/Z}{|X/Z|} \approx \operatorname{hol}_A (\mathcal{L}), \quad \frac{X/Y}{|X/Y|} \approx \operatorname{hol}_{-A} (\mathcal{L}). 
    \]
\end{enumerate}    
\end{prop}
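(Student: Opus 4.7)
My plan has four main steps, the first being routine book-keeping and the bulk of the content being in a geometric re-identification of the resulting expressions.

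First, for each of the four sectors $S_2, S_3, S_4, S_5$ I read off from Proposition~\ref{prop:dominance} the unique dominant summand in each of $X_{R,\varphi}$, $Y_{R,\varphi}$, $Z_{R,\varphi}$ given by formulas~\eqref{coordinates}. Forming the ratio of two dominant summands, the $R^{1/3}$-linear exponent contributes only through its real part to the modulus, so after dividing by $|\cdot|$ what remains is a finite sum of line integrals of the form $\int_{\eta_0 - \eta_1} (u_i - u_j)$ and $\int_{\gamma_0 - \gamma_1} u_k$. By Remark~\ref{rem:argument_period}(2) these are all purely imaginary, so the result is a genuine phase. The ``$\approx$'' is controlled by Theorem~\ref{thm:dilation_spectrum} and Proposition~\ref{prop:asymptotic_monodromy}: the subleading summands are suppressed by $O(\exp(-\epsilon R^{1/3}))$ uniformly on compact subsets of the open sector.

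Second, I reinterpret each such line integral as the integral of the global unitary connection $1$-form $u$ on $\mathcal{L}$ over an oriented arc in $\widetilde{\Sigma}_t$. By construction (Section~\ref{sec:flat_unitary} and Lemma~\ref{lem:pushforward_metric}) each $u_j$ is the pullback of $u$ to the sheet $\Sigma_j$, so $\int_{\eta_0 - \eta_1} u_j = -\int_{f_j} u$ because the segment $[r,1-r]$ is the concatenation $(-\eta_0) * \eta_1$. Similarly, $\int_{\gamma_P} u_j$ equals $\int_{\widetilde{\gamma}_P^{(j)}} u$ where $\widetilde{\gamma}_P^{(j)}$ is the lift of $\gamma_P$ starting on sheet $\Sigma_j$; the monodromy permutation computed in Section~\ref{sec:monodromy} (the cyclic matrix $T$) dictates on which sheet this lift ends.

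Third, I assemble these arcs into a $1$-chain on $\widetilde{\Sigma}_t$. The key combinatorial observation is that in each sector the linear combination of $u_j$'s arising in Step 1 has the property that the boundary of the corresponding $1$-chain vanishes: the endpoints of the lifts $f_i, -f_j$ match precisely with the endpoints of the lifts $\widetilde{\gamma}_P^{(\cdot)}$ because of the cyclic permutation structure of $T$. Consequently the $1$-chain is a cycle, and comparing with the basis of Lemma~\ref{lem:basis} identifies it with the specific $\mathbb{Z}$-linear combination of $A$ and $B$ predicted by the proposition (e.g., $B$ on $S_2$ for $Y/X$, $2B$ on $S_2$ and $S_3$ for $Y/Z$, $A$ on $S_4$ and $S_5$ for $X/Z$, $A+B$ on $S_4$ for $X/Y$, and $-A$ on $S_5$ for $X/Y$). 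Exponentiating the phase and invoking~\eqref{eq:holonomy} delivers the claimed $\operatorname{hol}_{\star}(\mathcal{L})$ in each case.

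The main obstacle will be Step 3: keeping orientations and sheet labels straight through the cyclic permutation actions at both branch points $0$ and $1$, so as to verify that the resulting $1$-chain is genuinely closed and is the stated class in $H_1(\widetilde{\Sigma}_t, \mathbb{Z})$ rather than a homologous alternative. This is where the concatenation conventions for $f_i, \widetilde{\gamma}_P^{(j)}, \widetilde{\widetilde{\gamma}}_P^{(j)}$ must be spelled out carefully, following Figure~\ref{fig:closed_loop}. Once the conventions are fixed, the identification is a direct unwinding of definitions.
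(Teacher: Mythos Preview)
Your proposal is correct and follows essentially the same approach as the paper's own proof: identify the dominant summand in each trace coordinate on each sector via Proposition~\ref{prop:dominance}, form the ratio so that only the purely imaginary $u_i$-integrals survive after normalizing, and then reinterpret these as the integral of the flat $\mathrm{U}(1)$-connection form on $\mathcal{L}$ along a closed loop on $\widetilde{\Sigma}_t$ assembled from the lifts $f_i$ and $\tilde{\gamma}_P, \tilde{\tilde{\gamma}}_P$. The paper in fact only spells out the case $Y/X$ on $S_2$ and leaves the rest to the reader, so your Step~3 bookkeeping of orientations and sheet labels is exactly the content that would complete the argument.
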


\begin{proof}
We only prove the first statement of the first item and leave the rest to the readers, because the proofs follow the same line. 
We apply formulas~\eqref{coordinates} combined with Proposition~\ref{prop:dominance}. 
On $S_2$, the dominant term of $X, Y, Z$ are respectively their first, second and first terms. 
The effect of the sub-leading terms on the angles is negligible, and they only amount to a homotopy. 
We find 
\[
 \sqrt{-1} \operatorname{Im}( \ln (Y/X) ) \approx \int_{\eta_0 - \eta_1} u_2 - u_3 + \int_{\gamma_0 - \gamma_1} u_2 
\]
% \begin{align*}
%     \operatorname{Im}( \ln (X/Z) ) \approx \int_{\eta_0 - \eta_1} u_2 - u_3 + \int_{\gamma_0 - \gamma_1} u_2 \\
%     \operatorname{Im}( \ln (Y/Z) ) \approx \int_{\eta_0 - \eta_1} 2 u_2 - 2 u_3 + \int_{\gamma_0 - \gamma_1} u_2 - u_3 - u_1. 
% \end{align*}
Now, the path $\eta_0 - \eta_1$ is precisely the straight line segment $[r, 1-r]\subset \mathbb{R}$, with opposite orientation (i.e., from $1-r$ to $r$). 
Then, 
\[
    \exp \left( \int_{\eta_0 - \eta_1} u_2 \right) = \exp \left( \int_{f_2} u_2 \right) 
\]
is the holonomy of $\mathcal{L}$ along the path $f_2$, because $u_2$ is its connection $1$-form on the corresponding sheet (the sign comes from the opposite orientation of $\eta_0 - \eta_1$ and $[r, 1-r]$). 
Similarly, 
\[
    \exp \left( - \int_{\eta_0 - \eta_1} u_3 \right) = \exp \left( - \int_{f_3} u_3 \right) 
\]
is the inverse of the holonomy of $\mathcal{L}$ along the path $f_3$. 
We infer that the integral term appearing in the argument of $X/Z$ is equal to the holonomy of $\mathcal{L}$ along the closed loop $B = f_2 * \tilde{\tilde{\gamma}}_0 * (-f_3) * (-\tilde{\tilde{\gamma}}_1)$ on $\widetilde{\Sigma}$. 
\end{proof}

\subsubsection{End of the proof}

The homotopy class of a fiber of $S$ is given by the torus $T_{\varepsilon_1, \varepsilon_2}$ of the form 
\[
    \left\vert \frac XZ \right\vert = \varepsilon_1, \quad \left\vert \frac XY \right\vert = \varepsilon_2
\]
in $U_1$ for some (equivalently, any) $0< \varepsilon_1, \varepsilon_2 \ll 1$. 
Now, if we choose $z_1\in S_4$ of absolute value $1$ then the image of $H^{-1} (R z_1 )$ by $\psi$ lies in a neighbourhood of $T_{\varepsilon_1, \varepsilon_2}$ for suitable values of $\varepsilon_1, \varepsilon_2$. 
Moreover, by Proposition~\ref{prop:angles}, the angles of the above affine coordinates are approximated by the holonomy of the spectral sheaf along $A, A+B$. 
The sub-leading terms of the asymptotic expressions only amount to a homotopy. 
As $A, A+B$ is a $\mathbb{Z}$-basis of $H_1 (\widetilde{\Sigma}, \mathbb{Z})$, we get the assertion. 

\subsection{Conditional proof of Proposition~\ref{prop:main}}\label{sec:proof_main}

Recall that the notation $\mathcal{D}\partial\mathcal{M}_B(\textbf{c})$ stands for the dual boundary complex of the compactifying divisor on the Betti side (see the paragraph following Definition~\ref{def:Betti}). By the results of Section~\ref{sec:char}, the divisor is of type $I_1$, i.e. a so-called fishtail curve, and its dual simplicial complex has a single vertex $V$ with a single loop edge $E$ corresponding to the nodal point of~\eqref{eq:nodal_curve}. 

\subsubsection{Asymptotics over the Stokes rays}

The results of this section are conditional on the following hypothesis, for which we currently have no proof. 

\begin{assn}\label{assn:Stokes}
The asymptotic expansions of~\eqref{coordinates} hold over the Stokes rays too. 
\end{assn}

However, we emphasize that the results of this section are not needed anywhere in the sequel, so the main results of the paper do not rely on this assumption. 
The only reason we include this section is to shed a light on the possible behaviour of the correspondence in the Stokes directions, which is otherwise not immediate to guess. 

We see from Lemma~\ref{lem:dominance} that when $x$ crosses a critical angle of the second kind that is not of the first kind at the same time, then the dominance properties of the terms \emph{within} a trace coordinate change, but the dominance \emph{between} the coordinates $X,Y$ remain unchanged. 
Therefore, for the purpose of computing the limit of the ratios of the coordinates as $R\to\infty$, it is sufficient to focus on the critical angles of the first kind, which are integer multiples of $\pi$. 

\begin{prop}\label{prop:affine_coordinates}
Let $R\to\infty$ along a ray in the Hitchin base. 
\begin{enumerate}
    \item \label{prop:affine_coordinates1}
    If $\varphi = (2k-1)\pi$ for some $k\in\mathbb{Z}$ then we have 
    \begin{align*}
        \lim \frac{\lvert X_{R,\varphi}\rvert}{\lvert Y_{R,\varphi}\rvert} & =1, \\
        \lim \frac{\lvert X_{R,\varphi}\rvert}{\lvert Z_{R,\varphi}\rvert} & \geq \frac 12, \\
        \lim \frac{\lvert Y_{R,\varphi}\rvert}{\lvert Z_{R,\varphi}\rvert} & \geq \frac 12,
    \end{align*}
    including the possibility that the latter two limits are both simultaneously $\infty$. 
    \item \label{prop:affine_coordinates2}
    If $\varphi = 2k\pi$ then we have 
    \begin{align*}
        \frac{\lvert X_{R,\varphi}\rvert}{\lvert Y_{R,\varphi}\rvert} & \to 1, \\
         \frac{\lvert X_{R,\varphi}\rvert}{\lvert Z_{R,\varphi}\rvert} & \to 0,\\
        \frac{\lvert Y_{R,\varphi}\rvert}{\lvert Z_{R,\varphi}\rvert} & \to 0.
    \end{align*}
\end{enumerate}
\end{prop}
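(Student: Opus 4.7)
The approach is direct computation from formulas~\eqref{coordinates} under Assumption~\ref{assn:Stokes}. By~\eqref{eq:argx} and the visible $2\pi/3$-symmetry in Figure~\ref{figure:sectors}, it suffices to treat $\varphi = 0$ (giving $\arg(x) = 5\pi/6$, so $a = -\sqrt{3}|x|/2$ and $b = |x|/2$) and $\varphi = \pi$ (giving $\arg(x) = 7\pi/6$, so $a = -\sqrt{3}|x|/2$ and $b = -|x|/2$); all other $\varphi \in \pi\mathbb{Z}$ differ from these only by the cyclic permutation of sheets coming from the deck transformation of $\tau^3 = t$. In both cases I substitute into~\eqref{eq:Real_parts} to read off the nine real parts governing the three exponential summands of $|X_{R,\varphi}|, |Y_{R,\varphi}|, |Z_{R,\varphi}|$, and identify which are dominant.

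For part~\eqref{prop:affine_coordinates2}, at $\varphi = 0$ the three exponents appearing in $|Z_{R,\varphi}|$ specialize to $\{-\sqrt{3}|x|/2,\, -\sqrt{3}|x|/2,\, \sqrt{3}|x|\}$, whose unique maximum $\sqrt{3}|x|$ strictly exceeds the dominant exponent $\sqrt{3}|x|/2$ shared by both $|X_{R,\varphi}|$ and $|Y_{R,\varphi}|$. Hence $|X_{R,\varphi}|/|Z_{R,\varphi}|$ and $|Y_{R,\varphi}|/|Z_{R,\varphi}|$ each decay like $\exp(-R^{1/3}\sqrt{3}|x|/2)$, while $|X_{R,\varphi}|/|Y_{R,\varphi}| \to 1$ because the dominant exponential factors coincide and the accompanying phase prefactors have unit modulus.

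For part~\eqref{prop:affine_coordinates1}, at $\varphi = \pi$ the three exponents of $|Z_{R,\varphi}|$ become $\{-\sqrt{3}|x|,\, \sqrt{3}|x|/2,\, \sqrt{3}|x|/2\}$, so the maximum $\sqrt{3}|x|/2$ is attained \emph{twice}, whereas $|X_{R,\varphi}|$ and $|Y_{R,\varphi}|$ each retain a unique dominant exponent, also equal to $\sqrt{3}|x|/2$. Denoting by $\phi_X, \phi_Y$ the unit-modulus phase factors of the dominant terms of $X, Y$ and by $\phi_{Z,1}, \phi_{Z,2}$ those of the two coalescing dominant terms of $Z$, one obtains
\[
\frac{|X_{R,\varphi}|}{|Y_{R,\varphi}|} \to 1, \qquad \frac{|X_{R,\varphi}|}{|Z_{R,\varphi}|},\; \frac{|Y_{R,\varphi}|}{|Z_{R,\varphi}|} \to \frac{1}{|\phi_{Z,1}+\phi_{Z,2}|}.
\]
The triangle inequality $|\phi_{Z,1}+\phi_{Z,2}|\leq 2$ yields the lower bound $1/2$; the two ratios diverge to $\infty$ precisely when $\phi_{Z,1}+\phi_{Z,2}=0$, and this single condition forces them to be simultaneously infinite, as claimed.

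The main obstacle lies entirely in justifying the use of~\eqref{coordinates} along these rays. Theorem~\ref{thm:dilation_spectrum} supplies exponentially suppressed error terms only along non-critical paths, and by Lemma~\ref{lem:noncritical} the paths $\eta_0, \eta_1$ fail to be non-critical exactly at $\varphi \in \pi\mathbb{Z}$: here two eigenvalues of $\tau\theta_1$ acquire equal real parts along $\eta_0$ and $\eta_1$, and the WKB estimate on which~\eqref{coordinates} rests breaks down. This is precisely why the argument has to invoke Assumption~\ref{assn:Stokes}; an unconditional proof would require a finer asymptotic analysis handling the coalescence of the saddles, presumably via Borel resummation or a steepest-descent treatment at the turning configurations.
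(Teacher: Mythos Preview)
Your proof is correct and follows essentially the same route as the paper's: both reduce by the $2\pi/3$-symmetry to one representative ray for each parity, evaluate the nine real parts from~\eqref{eq:Real_parts}, and observe that on the odd-$k$ rays the dominant exponent of $Z$ is attained twice (producing the phase sum $|\phi_{Z,1}+\phi_{Z,2}|\le 2$) while on the even-$k$ rays it is unique and strictly larger. The only cosmetic difference is your choice of $R_5,R_7$ as representatives versus the paper's $R_3,R_9$, and your somewhat more explicit treatment of the triangle-inequality step and the simultaneity of the two possible $\infty$'s.
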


\begin{proof}
 According to~\eqref{eq:argx}, the condition of part~\ref{prop:affine_coordinates1} is equivalent to that $x$ lies on one of the Stokes rays $R_3,R_7, R_{11}$. 
 Similarly, the condition of part~\ref{prop:affine_coordinates2} is equivalent to that $x$ lies on one of the Stokes rays $R_1,R_5, R_9$.

Let us start by treating $R_3$, defined by $a=0, b>0$. 
We use the table obtained in Proposition~\ref{prop:dominance}. 
The analytic functions $X_{R,\varphi},Y_{R,\varphi},Z_{R,\varphi}$ have asymptotic expansions on both sides of $R_3$, i.e. for $\varphi = (2k-1)\pi \pm \varepsilon$ as $R\to\infty$, for any $0< \varepsilon < \frac{\pi}{12}$. 
The term $R^{1/3}\frac{\sqrt{3}b}2$ in the maximal dilation exponents of the coordinates is common (and positive), therefore it does not change the limit of their quotients. 
On the other hand, the dependence of the maximal dilation exponents on $a$ of $X_{R,\varphi},Y_{R,\varphi},Z_{R,\varphi}$ is in order 
\[
    -\frac a2, \quad \frac a2, \quad \left\vert \frac{3a}{2} \right\vert. 
\]
Using Assumption~\ref{assn:Stokes} we see that the maximal dilation exponents of the trace coordinates agree: 
\begin{align*}
   \lvert X_{R,\varphi}\rvert\approx\exp\left( R^{1/3}\frac{\sqrt{3}}{2}b\right) , \\
    \lvert Y_{R,\varphi}\rvert\approx\exp\left(R^{1/3}\frac{\sqrt{3}}{2}b \right) , \\
    \lvert Z_{R,\varphi}\rvert\approx \exp\left(R^{1/3}\frac{\sqrt{3}}{2}b \right) \left\vert e^{\sqrt{-1}\varpi} + e^{\sqrt{-1}\varpi'} \right\vert 
\end{align*}
for some $\varpi, \varpi'\in\mathbb{R}$, coming from the purely imaginary terms in the argument of the first and third terms of the expansion of $Z$ in~\eqref{coordinates}. 
The statement for $R_3$ follows. 
The argument is similar for $R_7, R_{11}$. 

Similarly, $R_9$ is defined by $a=0, b<0$. By virtue of Proposition~\ref{prop:dominance}, we see that the maximal dilation exponents of $X,Y$ agree, but this time they differ from the one of $Z$: 
\begin{align*}
    \lvert X_{R,\varphi}\rvert\approx\exp\left( -R^{1/3}\frac{\sqrt{3}}{2}b\right) , \\
    \lvert Y_{R,\varphi}\rvert\approx\exp\left( -R^{1/3}\frac{\sqrt{3}}{2}b \right) , \\
    \lvert Z_{R,\varphi}\rvert\approx\exp\left(- R^{1/3}\sqrt{3}b \right) . 
\end{align*}
(Here, there is only one dominant term in $Z$, therefore the phase phenomenon of the odd case does not occur.) 
Again, a similar argument works for $R_1,R_5$. 
\end{proof}

\subsubsection{Critical angles and arc decomposition}
Let us now consider $X_{R,\varphi}$ and $Y_{R,\varphi}$ for fixed $R\gg 1$ as functions of $\varphi$, see formulas~\eqref{coordinates}. 
Notice that these formulas depend on $\varphi$ through the factor $e^{i\varphi/3}$. 
As $\varphi$ ranges over $[0,2\pi ]$, the argument of the coordinate $x$ is affected by one third turn in the $(a,b)$-coordinate plane (see Figure~\ref{figure:sectors}). 
The sectors $S_j$ have opening angles $\pi/6$. 
As $\varphi$ ranges over $[0,2\pi ]$, $x$ will cross exactly two rays between white and shaded sectors. 
Remember that $X_{R,\varphi}$ dominates $Y_{R,\varphi}$ over white sectors and vice versa on shaded sectors. 
Therefore there exist two critical angles $\varphi_1=\pi$ %\varphi_{\operatorname{crit}}$ 
and $\varphi_2=2\pi$, %\varphi_{\operatorname{crit}}+\pi$ 
where the dominance order between $\lvert X_{R,\varphi}\rvert$ and $\lvert Y_{R,\varphi}\rvert$ changes. 
%Up to relabeling, we may assume that $\varphi_1$ is as in Proposition~\ref{prop:affine_coordinates}~\ref{prop:affine_coordinates1}, and then $\varphi_2$ is as in Proposition~\ref{prop:affine_coordinates}~\ref{prop:affine_coordinates2}. 
Lemma~\ref{lem:dominance} may be reformulated as: 
\begin{lemma}\label{lem:arc_decomposition}
The critical angles $\varphi_1, \varphi_2$ decompose $S^1_R$ into two closed arcs
\begin{equation*}
    S^1_R=I_1\cup I_2,
\end{equation*}
such that (up to relabeling)
\begin{itemize}
    \item for $\varphi\in\operatorname{Int}I_1$, $\lvert X_{R,\varphi}\rvert\gg \lvert Y_{R,\varphi}\rvert$
    \item for $\varphi\in\operatorname{Int}I_2$, $\lvert Y_{R,\varphi}\rvert\gg \lvert X_{R,\varphi}\rvert$.
\end{itemize}
\end{lemma}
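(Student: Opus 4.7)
The plan is to leverage Lemma~\ref{lem:dominance}(ii) directly, translating its sector-wise dominance information to the circle parameter $\varphi$. First I would use~\eqref{eq:argx}, which gives $\operatorname{arg}(x) = \varphi/3 + 5\pi/6$, to track how $x$ moves through the twelve sectors $S_j$ of the $x$-plane as $\varphi$ ranges over $[0,2\pi]$. Since $\varphi \mapsto \operatorname{arg}(x)$ sweeps out an arc of length $2\pi/3$, the point $x$ visits exactly four consecutive closed sectors, and I would identify them explicitly: starting on the ray $R_5$ at $\varphi = 0$, $x$ passes through $S_6, S_7, S_8, S_9$ and ends on $R_9$ at $\varphi = 2\pi$, crossing $R_6$ at $\varphi = \pi/2$, $R_7$ at $\varphi = \pi$, and $R_8$ at $\varphi = 3\pi/2$.

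Next, I would read off the dominance pattern from the table in Lemma~\ref{lem:dominance}(ii): on the interiors of $S_6$ and $S_7$ one has $|Y_{R,\varphi}| \gg |X_{R,\varphi}|$, while on the interiors of $S_8$ and $S_9$ one has $|X_{R,\varphi}| \gg |Y_{R,\varphi}|$. Hence crossing $R_6$ and $R_8$ does \emph{not} reverse the dominance, whereas crossing $R_5$, $R_7$, or $R_9$ does. In terms of $\varphi$, dominance reverses only at $\varphi \in \{0, \pi, 2\pi\}$; since $0$ and $2\pi$ are identified on $S^1_R$, this gives exactly the two critical angles $\varphi_1 = \pi$ and $\varphi_2 = 2\pi$ announced in the statement.

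Finally, I would set $I_1 = [\pi, 2\pi]$ and $I_2 = [2\pi, \pi]$ (the latter understood as going through $\varphi = 0 \equiv 2\pi$), so that on $\operatorname{Int} I_1$ the parameter $x$ stays inside $S_8 \cup S_9 \cup \operatorname{Int}(R_8)$ and on $\operatorname{Int} I_2$ it stays inside $S_6 \cup S_7 \cup \operatorname{Int}(R_6)$. The strict dominance claims on $\operatorname{Int} I_1$ and $\operatorname{Int} I_2$ then follow from Lemma~\ref{lem:dominance}(ii) applied to the relevant sector, together with the observation that the noncritical rays $R_6$ and $R_8$ preserve the $X$--$Y$ dominance direction.

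The argument is in essence a bookkeeping exercise once Lemma~\ref{lem:dominance} is in place; the only point requiring care is making sure the decomposition uses the correct range for $\varphi$ and that $R_6, R_8$ are not spurious additional critical angles, both of which are immediate from the sector-by-sector table.
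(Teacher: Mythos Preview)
Your proposal is correct and follows essentially the same approach as the paper, which presents this lemma simply as a reformulation of Lemma~\ref{lem:dominance} after the preceding paragraph tracks $x$ through one third of a turn in the $(a,b)$-plane and observes that exactly two boundary rays between white and shaded sectors are crossed. You carry out the same bookkeeping more explicitly, identifying the visited sectors $S_6,\ldots,S_9$ and the crossed rays $R_5,\ldots,R_9$, and verifying that $R_6,R_8$ preserve the $X$--$Y$ dominance while $R_5,R_7,R_9$ reverse it.
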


\subsubsection{Identifying the images of suitable arcs}

Fix $0 < \varepsilon \ll 1$ and let us decompose 
\begin{align*}
    [0, 2\pi ] & = [0, \varepsilon ] \cup [\varepsilon , \pi - \varepsilon ] \cup [\pi - \varepsilon, \pi + \varepsilon ] \cup [\pi + \varepsilon , 2\pi - \varepsilon ] \cup [2\pi - \varepsilon , 2\pi ] \\
    & = A_1 \cup A_2 \cup A_3 \cup A_4 \cup A_5. 
\end{align*}
It follows from Lemmas~\ref{lem:dominance} and~\ref{lem:arc_decomposition} that 
\[
    A_2 \subset I_2, \qquad A_4 \subset I_1 
\]
after identifying $[0, 2\pi ]$ with $S^1_R$. 

Proposition~\ref{prop:main} is a straightforward consequence of:  
\begin{lemma}
There exists $R_0 = R_0(\varepsilon)>0$ such that for all $R> R_0$, $S\circ \psi \circ \sigma$ maps 
\begin{enumerate}
    \item $A_2$ onto a connected subset of the edge of $\mathcal{D}\partial\mathcal{M}_B(\textbf{c})$ defined by $\phi_1 = 0$, 
    \item $A_3$ onto a connected subset of $\mathcal{D}\partial\mathcal{M}_B(\textbf{c})$ containing $(1,0,0)$, 
    \item $A_4$ onto a connected subset of the edge of $\mathcal{D}\partial\mathcal{M}_B(\textbf{c})$ defined by $\phi_2 = 0$, 
    \item $A_1\cup A_5$ onto a connected subset of the complement of $(1,0,0)$ in $\mathcal{D}\partial\mathcal{M}_B(\textbf{c})$. 
\end{enumerate}
\end{lemma}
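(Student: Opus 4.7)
The plan is to combine the sectorial dominance tables of Proposition~\ref{prop:dominance} and Lemma~\ref{lem:dominance} with the limit-of-ratios analysis of Proposition~\ref{prop:affine_coordinates} (which relies on Assumption~\ref{assn:Stokes}) at the two critical angles of the first kind $\varphi=0$ and $\varphi=\pi$, and then to read off where the image of each arc sits relative to the neighbourhoods $N_0, N_1, N_2$ of Section~\ref{sec:Simpsons_map}. On $A_2 = [\varepsilon, \pi-\varepsilon]$, which lies in $S_6\cup S_7$ and is bounded away from the critical rays, I would first observe that the dominance $|Z_{R,\varphi}|\gg |Y_{R,\varphi}|\gg |X_{R,\varphi}|$ holds with exponential rates bounded uniformly below by a constant depending on $\varepsilon$, so the projective class $[X:Y:Z]$ converges uniformly in $\varphi\in A_2$ to the nodal point $[0:0:1]$ along the direction $Y/X\to\infty$. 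Consequently $\psi\circ\sigma(A_2)$ lies in any prescribed neighbourhood of $P_2$ in $\widetilde{\mathcal{M}_B}$ for $R$ large enough; choosing $R_0$ so that this neighbourhood is disjoint from $N_1$ forces $\phi_1\equiv 0$ along the image, which is then a connected subset of the edge $\phi_1=0$. The argument on $A_4$ giving the edge $\phi_2=0$ is completely symmetric.

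For $A_3$, I would invoke Proposition~\ref{prop:affine_coordinates}, part~(\ref{prop:affine_coordinates1}) at $\varphi=\pi$: the ratios satisfy $|X|/|Y|\to 1$ and $|X|/|Z|, |Y|/|Z|\ge 1/2$ (allowing $\infty$). The crucial feature is that $|X|/|Z|$ does not tend to zero, so the projective limit avoids the nodal point $[0:0:1]$: it is either a finite point of the affine chart with modulus-equal first two coordinates (necessarily a smooth point of the compactifying cubic $C$), or a point at infinity $[1:\zeta:0]$ with $\zeta^3=-1$, again smooth on $C$. In either case the limit sits in $C_1\setminus\{P_1,P_2\}$, so for $R\gg 1$ the image $\psi\circ\sigma(\pi)$ lies in $N_0\setminus (N_1\cup N_2)$, where $\phi_0\equiv 1$. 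This yields $S\circ\psi\circ\sigma(\pi)=(1,0,0)$, and the connectedness of $A_3$ then guarantees $(1,0,0)\in S\circ\psi\circ\sigma(A_3)$.

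Finally, on $A_1\cup A_5$ I would use Proposition~\ref{prop:affine_coordinates}, part~(\ref{prop:affine_coordinates2}) at $\varphi=2\pi$: both $|X|/|Z|$ and $|Y|/|Z|$ tend to zero while $|X|/|Y|\to 1$, so the image converges to the nodal point along a direction of modulus one. After blow-up the limit lies strictly inside the exceptional divisor $C_2$, at positive distance from both $P_1$ and $P_2$, hence inside $N_1\cup N_2$ (which must cover $C_2\setminus\{P_1,P_2\}$ for the dual complex to realise the boundary of a $2$-simplex as in Section~\ref{sec:Simpsons_map}). For $\varphi$ slightly away from $2\pi$ within $A_1\subset S_6$ or $A_5\subset S_9$, Proposition~\ref{prop:dominance} places the image near $P_2$ or $P_1$, still in $N_2\cup N_1$. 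Thus the whole image satisfies $\phi_0<1$ for $R\gg 1$, yielding a connected subset of the complement of $(1,0,0)$. The main obstacle in this plan is the ``infinite ratio'' subcase of Proposition~\ref{prop:affine_coordinates}, part~(\ref{prop:affine_coordinates1}), where one must verify that the projective limit at infinity is a smooth point of $C_1$; this reduces to the algebraic fact that $1+\zeta^3=0$ cuts out three distinct smooth points of the compactifying cubic $C$, all lying in the open subset $C_1\setminus\{P_1,P_2\}$ after the nodal resolution.
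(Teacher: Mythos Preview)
Your approach is essentially the paper's, and the treatment of $A_2$ and $A_4$ matches it. Two points deserve comment.

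For $A_3$, the paper sidesteps the ``main obstacle'' you flag. Rather than locating the limit as a smooth point of $C_1$ and splitting into finite versus infinite ratio cases, the paper first normalises the cover by shrinking $N_1,N_2$ so that on $N_0\cap N_2\cap C_2$ one has $|X'/Y'|<1/10$ and on $N_0\cap N_1\cap C_2$ one has $|Y'/X'|<1/10$. Since Proposition~\ref{prop:affine_coordinates}\,(\ref{prop:affine_coordinates1}) gives $|X/Y|\to 1$, the blow-up coordinate satisfies $|X'/Y'|=1$ regardless of whether $|X/Z|$ is finite or infinite, and this single inequality already excludes membership in $N_1\cup N_2$. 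So the case analysis on $[1:\zeta:0]$ with $\zeta^3=-1$ is unnecessary; the conclusion $\phi_0=1$ follows directly.

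For $A_1\cup A_5$, there is a small logical slip. The implication ``image in $N_1\cup N_2$ $\Rightarrow$ $\phi_0<1$'' does not hold in general for a partition of unity subordinate to a cover: $\phi_i$ may well vanish at interior points of $N_i^*$. What you actually need (and what your own geometric observation supports) is the complementary statement: the limit point $P_0=[1:1]\in C_2$ lies at positive distance from $C_1$, hence outside $N_0$ once $N_0$ is chosen as a sufficiently thin tubular neighbourhood of $C_1$; therefore $\phi_0=0$ near $P_0$, which certainly gives $\phi_0<1$. This is exactly how the paper argues. Your parenthetical claim that ``$N_1\cup N_2$ must cover $C_2\setminus\{P_1,P_2\}$'' is not part of the standing hypotheses on the cover and is not needed.
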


For convenience we depict the geometry of the maps in Figure~\ref{fig:Simpson}. 

\begin{figure}
    \centering
    \includegraphics[width=13cm]{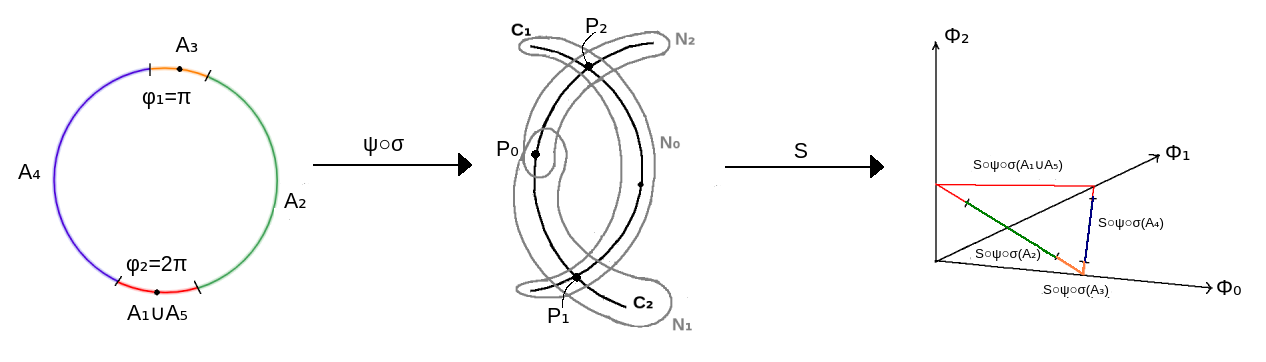}
    \caption{The map on the left is non-abelian Hodge and Riemann--Hilbert, the one on the right is Simpson's map}
    \label{fig:Simpson}
\end{figure}

\begin{proof}
Connectedness of the image follows in each case by continuity. 
Recall from Section~\ref{sec:blowup} the natural pair of homogeneous coordinates $[X'\colon Y']$ on the exceptional curve $C_2$ satisfying 
\[
     \frac XY = \frac{X'}{Y'}. 
\]
By shrinking $N_1, N_2$ we may assume that 
\begin{align}
    [X\colon Y \colon Z ] & \in N_0 \cap N_2 \cap C_2 \Rightarrow \left\vert \frac{X'}{Y'} \right\vert < \frac 1{10} \label{eq:N2} \\
    [X\colon Y \colon Z ] & \in N_0 \cap N_1 \cap C_2 \Rightarrow \left\vert \frac{Y'}{X'} \right\vert < \frac 1{10} \label{eq:N1} .
\end{align}
(The constants $1/10$ on the right hand side could be replaced by any positive constant strictly inferior to $1$.) 
Let us denote the point 
\[
    [X'\colon Y'] = [1\colon 1] \in C_2
\]
by $P_0$. 
We may then assume that 
\[
    P_0 \in N_1\cap N_2 \cap C_2, 
\]
because by inequalities~\eqref{eq:N2}-\eqref{eq:N1}, we have 
\[
    P_0 \notin N_0 .
\]
This formula also implies that 
\[
    \phi_0(P_0 ) = 0 . 
\]
On the other hand, the point $P_1$ introduced in Section~\ref{sec:Simpsons_map} is given by 
\[
    [X'\colon Y'] = [1\colon 0] 
\]
and $P_2$ by 
\[
    [X'\colon Y'] = [0\colon 1] .
\]
After this introduction, we are ready to treat the cases one by one. 
\begin{enumerate}
\item According to Lemma~\ref{lem:arc_decomposition}, $\psi \circ \sigma$ maps $A_2 \subset \operatorname{Int}I_2$ into a tiny neighborhood of the point $P_2$. 
Since $P_2\in N_0 \cap N_2$ and the triple intersection of the $N_i$ vanish, we see that $P_2 \notin N_1$. 
The statement follows because the partition of unity is subordinate to~\eqref{eq:cover}. 
\item 
By Proposition~\ref{prop:affine_coordinates}~\ref{prop:affine_coordinates1}, we have 
\[
    \lim_{R\to \infty} \psi \circ \sigma\left( R e^{\sqrt{-1} \pi} \right) \neq [0\colon 0\colon 1]\in C 
\]
because the limits of $X/Z$ and $Y/Z$ do not vanish. 
Said differently, the limit is not the nodal point of $C$. 
In addition, inequalities~\eqref{eq:N2}-\eqref{eq:N1} show that in the blow-up of the nodal point, the stronger condition 
\[
    \lim_{R\to \infty} \psi \circ \sigma\left( R e^{\sqrt{-1} \pi} \right) \notin N_1 \cup N_2 
\] 
holds too, because the limit satisfies $Y'/X'=1$. 
Since the partition of unity is subordinate to the cover~\eqref{eq:cover}, we infer that $S$ maps the above limit to $(1,0,0)$. 
By continuity, the same holds over $A_3$ too, for sufficiently large $R$. 
\item Completely similar to $A_2$. 
\item 
Proposition~\ref{prop:affine_coordinates}~\ref{prop:affine_coordinates2} implies that $\psi \circ \sigma( R )\to P_0$ as $R\to \infty$. 
Therefore, $S\circ \psi \circ \sigma( R e^{i\varphi_2}) \neq (1,0,0)$ for $R\gg 1$. 
The same then follows for any $\varphi\in A_1 \cup A_5$ by continuity. 
\end{enumerate}
\end{proof}

\bigskip
\textbf{Funding}
\bigskip
\\ The project supported by the Doctoral Excellence Fellowship Programme (DCEP) is funded by the National Research Development and Innovation Fund of the Ministry of Culture and Innovation and the Budapest University of Technology and Economics, under a grant agreement with the National Research, Development and Innovation Office. During the preparation of this manuscript, the authors were supported by the grant K146401 of the National Research, Development and Innovation Office.

%
%============================================================================%
%                                references                                  %
%============================================================================%

\end{document}